\pgfplotsset{compat=1.13}
\definecolor{uuuuuu}{rgb}{0.27,0.27,0.27}
\definecolor{sqsqsq}{rgb}{0.1255,0.1255,0.1255}
\newtheorem{dfn}{Definition} [section]
\newtheorem{theorem}[dfn]{Theorem}
\newtheorem{lemma}[dfn]{Lemma}
\newtheorem{conjecture}[dfn]{Conjecture}
\newtheorem{claim}[dfn]{Claim}
\def\lf{\left\lfloor}
\def\rf{\right\rfloor}
\begin{document}

\title{\bf\Large $d$-cluster-free sets with a given matching number}

\date{\today}

\author{Xizhi Liu \thanks{Department of Mathematics, Statistics, and Computer Science, University of Illinois, Chicago, IL, 60607 USA.\ Email: xliu246@uic.edu}}

\maketitle

\begin{abstract}
Let $3\le d\le k$ and $\nu\ge 0$ be fixed and $\mathcal{F}\subset\binom{[n]}{k}$.
The matching number of $\mathcal{F}$, denoted by $\nu(\mathcal{F})$, is the maximum number of pairwise disjoint sets in $\mathcal{F}$,
and $\mathcal{F}$ is $d$-cluster-free
if it does not contain $d$ sets with union of size at most $2k$ and empty intersection.
In this paper, we give a lower bound and an upper bound for the maximum size of a $d$-cluster-free
family with a matching number at least $\nu+1$.
In particular, our result of the case $\nu=1$ settles a conjecture of Mammoliti and Britz.
We also introduce a Tur\'{a}n problem in hypergraphs that allows multiple edges,
which may be of independent interest.
\end{abstract}

\section{Introduction}
We use $[n]$ to denote the set $\{1,\ldots,n\}$.
For a set $V$ we use $\binom{V}{k}$ to denote the collection of all $k$-subsets of $V$.
A \textit{$d$-cluster} of $k$-sets is a collection of $d$ different $k$-sets $A_1,\ldots,A_d$ such that
\[
|A_1\cup \cdots \cup A_d|\le 2k,\ \text{and}\ |A_1\cap \cdots \cap A_d|=0.
\]
A family $\mathcal{F}\subset \binom{[n]}{k}$ is \textit{$d$-cluster-free} if it does not contain $d$-clusters.
Note that a family is \textit{intersecting} if and only if it is $2$-cluster-free.
The celebrated Erd\H{o}s-Ko-Rado theorem \cite{EKR} states that if $n\ge 2k$ and $\mathcal{F}\subset \binom{[n]}{k}$ is an intersecting family,
then $|\mathcal{F}|\le\binom{n-1}{k-1}$. When $n>2k$, equality holds only if $\mathcal{F}$ is a \emph{star},
i.e. a family   of $k$-sets
that contain a fixed vertex.
In \cite{frankl1976d-wise}, Frankl showed that this theorem still holds for $n\ge dk/(d-1)$ when the intersecting condition
is replaced by the \textit{$d$-wise intersecting} condition, i.e. any $d$ sets of $\mathcal{F}$ have nonempty intersection.

\begin{theorem}[Frankl, \cite{frankl1976d-wise}]\label{thm-Frankl-d-wise}
Let $k\ge d\ge 3$ be fixed and $n\ge dk/(d-1)$. If $\mathcal{F}\subset\binom{[n]}{k}$ is a $d$-wise intersecting
family, then $|\mathcal{F}|\le\binom{n-1}{k-1}$, with equality only if $\mathcal{F}$ is a star.
\end{theorem}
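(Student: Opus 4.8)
The plan is to reduce, by two elementary steps, to a statement about $t$-intersecting families to which the Ahlswede--Khachatrian theorem can be applied.

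\textbf{Step 1: reductions.} We may assume $|\mathcal F|\ge d$; otherwise $|\mathcal F|\le d-1\le k-1<\binom{n-1}{k-1}$ and there is nothing to prove. Granting $|\mathcal F|\ge d$, the family is automatically $j$-wise intersecting for every $2\le j\le d$: if $F_1,\dots,F_j$ were members with empty intersection and $j<d$, we could adjoin $d-j$ further distinct members (they exist since $|\mathcal F|\ge d$) and obtain $d$ sets with empty intersection. Next, call $\mathcal F$ an \emph{$r$-star} if some $r$-element set is contained in every member. An $r$-star with $r\ge 2$ has at most $\binom{n-r}{k-r}<\binom{n-1}{k-1}$ members (using $n>k$), while a $1$-star has at most $\binom{n-1}{k-1}$ members, with equality exactly for the full star. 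Hence it suffices to prove $|\mathcal F|<\binom{n-1}{k-1}$ under the extra hypothesis that $\mathcal F$ is not an $r$-star for any $r\ge 1$.

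\textbf{Step 2: forcing large pairwise intersections.} Under these hypotheses I claim that $\mathcal F$ is $(d+1-t)$-wise $t$-intersecting for every $1\le t\le d-1$; the case $t=d-1$ says any two members of $\mathcal F$ intersect in at least $d-1$ points, i.e.\ $\mathcal F$ is $(d-1)$-intersecting. The proof is by induction on $t$, the base $t=1$ being Step 1. For the step, suppose $d-t$ members $F_1,\dots,F_{d-t}$ had intersection equal to a set $T$ with $|T|=t$. For every $F\in\mathcal F$ the $d+1-t$ sets $F,F_1,\dots,F_{d-t}$ intersect in at least $t$ points by the inductive hypothesis, and that intersection lies in $T$, hence equals $T$; so $T\subseteq F$, making $\mathcal F$ a $t$-star, a contradiction. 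Thus any $d-t$ members meet in $\ge t+1$ points, and adjoining further members (as in Step 1) spreads this to all $j$-wise intersections with $j\le d-t$, which is the claim for $t+1$.

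\textbf{Step 3: the final case.} It remains to bound $\mathcal F$ assuming it is $d$-wise intersecting, $(d-1)$-intersecting, and contained in no $r$-star. Since $\mathcal F$ is $(d-1)$-intersecting, the Ahlswede--Khachatrian theorem bounds $|\mathcal F|$ by the largest of the families $\mathcal A_i=\{F\in\binom{[n]}{k}:|F\cap[d-1+2i]|\ge d-1+i\}$ for $i\ge 0$; the $i=0$ family is a $(d-1)$-star, which we have excluded, so the competitors are the $\mathcal A_i$ with $i\ge 1$. The $d$-wise condition should now cut all of these down to $\mathcal A_1=\{F:|F\cap[d+1]|\ge d\}$: for $i\ge 2$ every member of $\mathcal A_i$ omits at most $i$ of the $d-1+2i\ (\ge d+3)$ coordinates of $[d-1+2i]$, and $d$ members can between them omit all of them (since $d\cdot i\ge d-1+2i$), producing $d$ sets with empty intersection, whereas in $\mathcal A_1$ each member omits at most one coordinate of $[d+1]$, so any $d$ members share a coordinate and $\mathcal A_1$ really is $d$-wise intersecting. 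Granting $|\mathcal F|\le|\mathcal A_1|=(d+1)\binom{n-d-1}{k-d}+\binom{n-d-1}{k-d-1}$, the theorem reduces to verifying
\[
(d+1)\binom{n-d-1}{k-d}+\binom{n-d-1}{k-d-1}<\binom{n-1}{k-1}\qquad(3\le d\le k,\ n\ge dk/(d-1)).
\]

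\textbf{The main obstacle.} Steps 1 and 2 are routine; the difficulty is concentrated in Step 3, and above all in the claim that the $d$-wise condition drags the Ahlswede--Khachatrian bound down to $|\mathcal A_1|$. Care is needed because in the low range $dk/(d-1)\le n<2k$ the quantity $\max_{i\ge 1}|\mathcal A_i|$ can exceed $\binom{n-1}{k-1}$ (for $k$ large it can be as big as $\binom nk$, since $\binom{[n]}{k}$ is itself $(d-1)$-intersecting there), so the raw Ahlswede--Khachatrian inequality is useless and one must prove a stability-type strengthening or argue directly --- for instance via the shifting technique, which preserves the property of being $d$-wise intersecting and reduces matters to showing that a shifted $d$-wise intersecting family containing $\{2,\dots,k+1\}$ has fewer than $\binom{n-1}{k-1}$ members once $n\ge dk/(d-1)$. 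In every approach the hypothesis $n\ge dk/(d-1)$ is exactly what makes the concluding estimate hold, and it is sharp: for $n<dk/(d-1)$ the whole family $\binom{[n]}{k}$ is $d$-wise intersecting and strictly larger than $\binom{n-1}{k-1}$.
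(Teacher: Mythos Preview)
The paper does not prove this statement; it is quoted as Frankl's 1976 theorem and used only as background. There is therefore no proof in the paper to compare against, and I assess your argument on its own merits.

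Steps 1 and 2 are correct and are in fact the standard opening of Frankl's own proof: a $d$-wise intersecting family that is not a star must be $(d-1)$-intersecting, and your induction on $t$ is clean.

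Step 3 contains a genuine gap, which you yourself flag under ``The main obstacle''. The Ahlswede--Khachatrian theorem only asserts that a $(d-1)$-intersecting family has size at most $\max_{i\ge 0}|\mathcal A_i|$, with the extremal families characterised. Observing that $\mathcal A_i$ for $i\ge 2$ fails to be $d$-wise intersecting does \emph{not} imply that every $d$-wise intersecting, $(d-1)$-intersecting, non-star family $\mathcal F$ satisfies $|\mathcal F|\le|\mathcal A_1|$: your $\mathcal F$ need not be any of the $\mathcal A_i$, and the complete intersection theorem says nothing about families of sub-maximal size. As you note, in the range $dk/(d-1)\le n<2k$ the raw bound $\max_i|\mathcal A_i|$ can be as large as $\binom nk$, so this route yields no information there without an additional stability-type input that you do not supply. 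What is actually required is a direct argument---shifting, as in Frankl's original paper---that handles the $d$-wise intersecting constraint and the size bound simultaneously; the Ahlswede--Khachatrian theorem, which in any case postdates Frankl's result by two decades, is not a substitute for that work. Even the concluding displayed inequality is left unverified. In short, your write-up correctly reduces the problem and correctly isolates where the difficulty lies, but it does not resolve it; what remains is essentially the whole proof.
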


Later, Frankl and F\"{u}redi \cite{frankl1983new} relaxed the intersection condition and  proved that
for every $n\ge k^2+3k$, if $\mathcal{F}\subset \binom{[n]}{k}$ is $3$-cluster-free, then $|\mathcal{F}|\le\binom{n-1}{k-1}$.
Moreover, they conjectured that the lower bound for $n$ can be improved to $3k/2$.
In \cite{mubayi2006erdos}, Mubayi settled their conjecture, and posed the following more general conjecture.

\begin{conjecture}[Mubayi, \cite{mubayi2006erdos}]\label{conj-Mubayi-d-cluster}
Let $k\ge d \ge 3$ and $n\ge dk/(d-1)$.
Suppose that $\mathcal{F}\subset \binom{[n]}{k}$ is $d$-cluster-free.
Then $|\mathcal{F}|\le\binom{n-1}{k-1}$, with equality only if $\mathcal{F}$ is a \emph{star}.
\end{conjecture}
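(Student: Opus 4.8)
The plan is to run a covering-number dichotomy and to reduce, as far as possible, to Frankl's theorem on $d$-wise intersecting families. Let $\mathcal{F}$ be a maximal $d$-cluster-free family and write $\tau(\mathcal{F})$ for the minimum size of a set meeting every member of $\mathcal{F}$. If $\tau(\mathcal{F})=1$ then $\mathcal{F}$ is contained in a star, so $|\mathcal{F}|\le\binom{n-1}{k-1}$ with equality precisely when $\mathcal{F}$ is the full star, which is exactly the assertion. Hence it suffices to prove the strict inequality $|\mathcal{F}|<\binom{n-1}{k-1}$ whenever $\tau(\mathcal{F})\ge 2$.

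First I would record the local restrictions that $d$-cluster-freeness imposes. If $A,B\in\mathcal{F}$ are disjoint then $|A\cup B|=2k$, and $A,B$ together with any $d-2$ further members of $\mathcal{F}$ contained in $A\cup B$ would form a $d$-cluster (their union lies in $A\cup B$, hence has size at most $2k$, and their intersection lies in $A\cap B=\emptyset$); so $A\cup B$ contains at most $d-1$ members of $\mathcal{F}$ in all. Conversely, if $A_1,\dots,A_d\in\mathcal{F}$ have empty common intersection then necessarily $|A_1\cup\cdots\cup A_d|>2k$, a strict gain in union size that can be fed into a covering argument. Next, letting $x$ be a vertex of maximum degree, I would split $|\mathcal{F}|=|\mathcal{F}(x)|+|\mathcal{F}(\bar x)|$ with $\mathcal{F}(x)=\{F\setminus\{x\}:x\in F\in\mathcal{F}\}$ and $\mathcal{F}(\bar x)=\{F\in\mathcal{F}:x\notin F\}$; since $|\mathcal{F}(x)|\le\binom{n-1}{k-1}$, it is enough to bound $|\mathcal{F}(\bar x)|$ by the deficiency $\binom{n-1}{k-1}-|\mathcal{F}(x)|$, and every member avoiding $x$ is tightly constrained because, together with $d-1$ members through $x$, it must fail to form a cluster.

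The heart of the matter is excluding families of large covering number. Here I would show that if $\tau(\mathcal{F})$ is large, one can greedily build $d$ distinct members of $\mathcal{F}$ whose union has size at most $2k$ yet which share no common vertex, contradicting $d$-cluster-freeness. The hypothesis $n\ge dk/(d-1)$ should enter exactly at this step: it is precisely the range in which enough room remains to carry the selection through, paralleling its role as the threshold in Frankl's theorem. For the complementary regime where $\tau(\mathcal{F})$ is bounded, I would try to classify the finitely many possible kernels in an $n$-independent way and verify that each yields a family of size $o\!\left(\binom{n-1}{k-1}\right)$; alternatively one could attempt a compression reduction, but with care, since ordinary shifting can create $d$-clusters, so one must restrict to shifts that preserve $d$-cluster-freeness (possibly using Frankl's $\Delta$-system method in place of plain shifting) and argue that the process still terminates at a structurally transparent family.

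The main obstacle I anticipate is twofold. First, controlling compressions: the case $d=3$ of Frankl and F\"uredi was already subtle precisely because shifting need not be cluster-preserving, and for general $d$ one probably needs either a cleverly restricted shift or an argument avoiding shifting entirely. Second, the sharpness of the threshold: proving the bound for $n$ merely sufficiently large is accessible via junta or spread-approximation techniques, which force $\mathcal{F}$ to be close to a star and then bootstrap exactness, but pushing all the way down to $n\ge dk/(d-1)$ demands using the union bound $2k$ efficiently rather than wastefully --- in essence a hands-on study of minimal $d$-clusters and their vertex covers --- and this is where I expect the genuine difficulty to lie.
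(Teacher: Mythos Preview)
The statement you are attempting is Conjecture~1.2 in the paper, and it is recorded there precisely as a \emph{conjecture}: the paper does not prove it. What the paper cites is partial progress --- the case $d=3$ (Mubayi), the case $d=4$ for large $n$ (Mubayi), general $d$ for sufficiently large $n$ (Mubayi--Ramadurai and, independently, F\"uredi--\"Ozkahya), the case $d=k$ (Chen--Liu--Wang), and the stable case (Mammoliti--Britz) --- but the full conjecture down to the threshold $n\ge dk/(d-1)$ remains open. So there is no ``paper's own proof'' to compare against.

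Your proposal is candid about being a plan rather than a proof, and you correctly flag the two genuine difficulties: shifting need not preserve $d$-cluster-freeness, and the large-$n$ stability/junta machinery does not reach the sharp threshold. Those are indeed the known obstacles. Where your outline is thinnest is the covering-number dichotomy itself. You never specify what ``$\tau(\mathcal{F})$ large'' means, nor give any mechanism by which a large covering number would let you greedily assemble $d$ members with union of size at most $2k$ and empty intersection; large $\tau$ pushes toward spread-out families, which tend to produce \emph{large} unions, not small ones, so the heuristic points the wrong way. The genuinely hard regime is $\tau(\mathcal{F})=2$ (or small but $\ge 2$), where $|\mathcal{F}|$ can still be of order $\binom{n-1}{k-1}$, and your ``classify the finitely many kernels in an $n$-independent way'' is not a method but a hope --- the kernel structure depends on $n$ through the threshold $dk/(d-1)$ in a way that has so far resisted exactly this kind of finite case analysis. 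In short, your write-up is an accurate map of the problem's terrain, but it does not contain a new idea that would close the conjecture; nor does the paper.
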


In \cite{mubayi2009set}, Mubayi proved Conjecture \ref{conj-Mubayi-d-cluster} for the case $d=4$ with $n$ sufficiently large.
Later, Mubayi and Ramadurai \cite{mubayi2009set}, and independently, F\"{u}redi and \"{O}zkahya \cite{furedi2011cluster}
proved this conjecture for sufficiently large $n$.
Chen, Liu and Wang \cite{chen2009cluster}
proved this conjecture for the case $d=k$.
In \cite{mammoliti2017}, Mammoliti and Britz showed that this conjecture
is true for \textit{stable} families, i.e. families that are invariant respect to shifting.
Very recently, Currier \cite{GC18} completely solved Conjecture \ref{conj-Mubayi-d-cluster} by proving the following stronger result.

\begin{theorem}[Currier, \cite{GC18}]\label{thm-Currier}
Let $2 \le d \le k \le n/2$.  Furthermore, suppose $\mathcal{F}^* \subset \mathcal{F} \subset {[n] \choose k}$ have the property that any $d$-cluster in $\mathcal{F}$ is contained entirely in $\mathcal{F}^*$. Then
\[
|\mathcal{F}^*| + \frac{n}{k}|\mathcal{F} - \mathcal{F}^*| \le {n \choose k}.
\]
Furthermore, excepting the case where both $d = 2$ and $n = 2k$, equality implies one of the following:
\begin{enumerate}
\item $\mathcal{F}^* = \emptyset$ and $\mathcal{F}$ is a maximum-sized star.
\item $\mathcal{F} = \mathcal{F}^* = {[n] \choose k}$.
\end{enumerate}
\end{theorem}
Note that Theorem \ref{thm-Currier} indeed implies Conjecture \ref{conj-Mubayi-d-cluster}
since $\mathcal{F}$ is $d$-cluster-free if and only if $\mathcal{F}^* = \emptyset$,
and the case $dk/(d-1) \le n < 2k$ has been settled by Theorem \ref{thm-Frankl-d-wise}.

In this paper, we mainly consider a conjecture raised by Mammoliti and Britz.
In \cite{mammoliti2017}, they sharpened Conjecture \ref{conj-Mubayi-d-cluster} further by distinguishing
the two conditions given by Theorem 1.1 and Conjecture \ref{conj-Mubayi-d-cluster}, and considered families
that are $d$-cluster-free but that are not $d$-wise intersecting.
In particular, they posed the following conjecture.

\begin{conjecture}[Mammoliti and Britz, \cite{mammoliti2017}]\label{conj-Mammoliti-Britz}
For $k\ge d\ge 3$ and sufficiently large $n$ every family $\mathcal{F}\subset \binom{[n]}{k}$ that is $d$-cluster-free
but that is not intersecting has size at most $\binom{n-k-1}{k-1}+1$, and
equality holds only if $\mathcal{F}$ is the disjoint union of a $k$-set and a star.
\end{conjecture}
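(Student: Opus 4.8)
The plan is to pin down the structure of $\mathcal{F}$ around a single dominant vertex, to convert the $d$-cluster condition near that vertex into an extremal problem for a hypergraph with multiplicities, and then to read off the extremal size from the solution of that problem.

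The starting point is one elementary consequence of $d$-cluster-freeness: if $P,Q\in\mathcal{F}$ are disjoint, then at most $d-1$ members of $\mathcal{F}$ are contained in $P\cup Q$, since $P$, $Q$ and any $d-2$ further such sets have union inside $P\cup Q$ (so of size at most $2k$) and empty intersection (it lies in $P\cap Q$). As $\mathcal{F}$ is not intersecting, fix disjoint $A,B\in\mathcal{F}$; then $|A\cup B|=2k$ and every vertex is avoided by at least one of $A,B$. I would first show that when $|\mathcal{F}|$ is close to $\binom{n-k-1}{k-1}$ — and so of size $(1-o(1))\binom{n-1}{k-1}$ — there is a vertex $x$ with $d_{\mathcal F}(x)=(1-o(1))\binom{n-1}{k-1}$; this comes from a stability statement for $d$-cluster-free families of near-extremal size, extracted from the large-$n$ solutions of Mubayi's conjecture in \cite{mubayi2009set,furedi2011cluster}, plus a short bootstrapping step. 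After relabelling so that $x\notin B$, write $\mathcal{F}_x$ for the star at $x$, $\mathcal{R}=\mathcal{F}\setminus\mathcal{F}_x$ (so $B\in\mathcal{R}$), and $L_x=\{C\setminus\{x\}:x\in C\in\mathcal{F}\}$ for the $(k-1)$-uniform link.

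The crux is to control $|L_x|$. For every edge $e\in L_x$ disjoint from $B$, the set $\{x\}\cup e$ lies in $\mathcal{F}_x$ and is disjoint from $B$, so by the elementary input at most $d-1$ members of $\mathcal{F}$, and in particular at most $d-1$ edges of $L_x$, lie inside $B\cup e$; each member of $\mathcal{R}$ imposes such a restriction, and handling all of $\mathcal{R}$ at once is what naturally turns this into a Tur\'an problem for a multi-hypergraph rather than a simple one. I would isolate the relevant forbidden configuration, determine its Tur\'an number, and prove the accompanying stability: a $(k-1)$-graph on $N$ vertices obeying this restriction has at most $\binom{N-k}{k-1}+O(1)$ edges, essentially only when it is a complete $(k-1)$-graph on $N-k$ of the vertices. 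This yields $|L_x|\le\binom{n-k-1}{k-1}+O(1)$, hence $|\mathcal{F}|\le\binom{n-k-1}{k-1}+O(1)$; refining the count by noting that a second member of $\mathcal{R}$ would force $L_x$ to avoid a $(k+1)$-set — costing $\binom{n-k-1}{k-1}-\binom{n-k-2}{k-1}=\Theta(n^{k-2})$ edges, far more than the extra sets can restore — pins the maximum down to $\binom{n-k-1}{k-1}+1$, with equality only if $\mathcal{F}$ is the disjoint union of a $k$-set and a star.

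The main obstacle is the third step: choosing the forbidden multi-hypergraph so that it faithfully captures the $d$-cluster condition, and then computing its Tur\'an number together with enough stability for the characterization. A secondary, quantitative difficulty is that the stability input must be sharpened so that $|\mathcal{R}|$ has strictly smaller order than $\Theta(n^{k-2})$; otherwise it could make up for the edges $L_x$ loses once it has to avoid more than $k$ vertices. I expect this to fall out of the same analysis, each member of $\mathcal{R}$ driving the link down further.
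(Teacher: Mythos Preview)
Your proposal attempts to prove the conjecture for all $d\ge 3$, but the paper shows that the conjecture is \emph{false} for every $d\ge 4$. Theorems~1.6 and~1.7 exhibit $d$-cluster-free non-intersecting families of size strictly larger than $\binom{n-k-1}{k-1}+1$; for instance, when $d=4$ the family $\mathcal{L}_3$ (a star at $y$, a disjoint $k$-set $C_1$, and all sets $\{y,v\}\cup A$ with $v\in C_1$ fixed and $A$ ranging over a $P_2^{k-2}$-free $(k-2)$-graph $\mathcal{G}$ on $W$) has size $\binom{n-k-1}{k-1}+ex(n-k-1,P_2^{k-2})+1$, and the middle term is of order $n^{k-3}$. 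So for $d\ge4$ no proof can exist, and your plan must break somewhere.

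The break is in your Tur\'an-type claim for the link $L_x$. You assert that the restriction ``at most $d-1$ edges of $L_x$ lie in $B\cup e$ for each $e\in L_x$ disjoint from $B$'' forces $|L_x|\le\binom{N-k}{k-1}+O(1)$. This is precisely what fails. In $\mathcal{L}_3$ the link of $y$ consists of all of $\binom{W}{k-1}$ together with $\Theta(n^{k-3})$ further edges of the form $\{v\}\cup A$, and one checks that for each $e\in\binom{W}{k-1}$ the set $C_1\cup e$ contains at most two link edges: $e$ itself, and at most one $\{v\}\cup A$ with $A\subset e$, since $\mathcal{G}$ is $P_2^{k-2}$-free on $k-1$ vertices. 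So the constraint you isolate is genuinely weaker than you assume once $d\ge4$: link edges meeting $B$ in a single vertex can be packed densely without violating it. Your ``second member of $\mathcal{R}$'' refinement does not help either, since $\mathcal{L}_3$ already has $|\mathcal{R}|=1$.

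For $d=3$ the conjecture is true (Theorem~1.8), and there your outline is broadly in the right spirit: the paper also uses stability to locate a dominant vertex $x$. It then proceeds somewhat differently, first showing $|F\cap F'|\le k-2$ for $F\in\mathcal{F}(x)$ and $F'\in\mathcal{F}(\bar{x})$, then proving that if $\mathcal{F}(\bar{x})$ is not entirely contained in $\binom{I}{k}$ a direct count forces $|\mathcal{F}(\bar{x})|\ge c\binom{n-1}{k-2}$, which Lemma~2.6 rules out; finally, with $\mathcal{F}(\bar{x})=\{B_1\}$, a short bad-set counting argument gives the bound and the uniqueness.
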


The \textit{matching number} $\nu(\mathcal{F})$ of a family $\mathcal{F}$ is the maximum number of pairwise disjoint sets in $\mathcal{F}$.
Let $f(n,k,d,\nu)$ denote the maximum size of a $d$-cluster-free family $\mathcal{F}\subset \binom{[n]}{k}$ with
a matching number at least $\nu+1$.
Note that by definition $f(n,k,d,0)$ is the maximum size of a $d$-cluster-free $k$-uniform family,
and $f(n,k,d,1)$ is the maximum size of a $k$-uniform family that is $d$-cluster-free but not intersecting.
Conjecture \ref{conj-Mammoliti-Britz} says that $f(n,k,d,1)\le \binom{n-k-1}{k-1}+1$ holds for sufficiently large $n$.

In this paper, we mainly consider the function $f(n,k,d,\nu)$ for $\nu$ fixed and $n$ sufficiently large.
Let $g,h$ be two functions of $n$. Then $f=o(g)$ means that $\lim_{n\to \infty} f/g=0$.
A lower bound and an upper bound for $f(n,k,d,\nu)$ will be given in the remaining part.
The lower bound is given by some constructions, and it is related to the Tur\'{a}n functions on hypergraphs.
On the other hand, the proof of the upper bound is based on a stability theorem proved by Mubayi in \cite{mubayiintersection4}.
So, before stating our results formally, first let us give some definitions.

An $r$-uniform family is also called an $r$-graph.
We use the term \textit{$r$-graph} to emphasize that multiple edges are not allowed in such a hypergraph,
and use the term \textit{$r$-multigraph} to emphasize that multiple edges are allowed in such a hypergraph.
Let $E(\mathcal{G})$ denote the edge set of $\mathcal{G}$,
and let $e(\mathcal{G})$ denote the number of edges in $\mathcal{G}$.
If $\mathcal{G}$ is a hypergraph, then we also use $\mathcal{G}$ to denote the edges set of $\mathcal{G}$.
Suppose that $\mathcal{G}$ is an $r$-multigraph and $E\in \mathcal{G}$ is an edge with multiplicity $\ell$, then $E$ is
counted $\ell$ times in $e(\mathcal{G})$.
Intuitively, one can view $E$ as a set with $\ell$ different colors $c_1,\ldots,c_{\ell}$,
and use $(E,c_i)$ to represent the edge $E$ with color $c_i$.
Pairs $(E,c_i), (E,c_j)$ are considered as different edges in $\mathcal{G}$ if $c_i\neq c_j$.

\begin{dfn}
Let $\mathcal{H}_{v}^{e}$ to be the collection of all $r$-multigraphs on $v$ vertices with $e$ edges.
Let $H_{v}^{e}$ be the collection of $r$-graphs in $\mathcal{H}_{v}^{e}$.
An $r$-multigraph $\mathcal{G}$ is $\mathcal{H}_{v}^{e}$-free if it does not contain any element in $\mathcal{H}_{v}^{e}$ as a subgraph.
An $r$-graph $G$ is $H_{v}^{e}$-free if it  does not contain any element in $H_{v}^{e}$ as a subgraph.
\end{dfn}

Let $EX^{r}(n,\mathcal{H}_{v}^{e})$ denote the maximum number of edges in an $n$-vertex $\mathcal{H}_{v}^{e}$-free $r$-multigraph.
Let $ex^{r}(n,H_{v}^{e})$ denote the maximum number of edges in an $n$-vertex $H_{v}^{e}$-free $r$-graph.
Sometimes we omit the superscript $r$ if there is no cause of any ambiguity.

Let $n,r,t,\lambda$ be integers and $n\ge r\ge t\ge 0$, $\lambda\ge 1$.
A \textit{$t$-$(n,r,\lambda)$-design} is an $r$-graph
$\mathcal{G}$ on $[n]$ such that for every $t$-subset $T$ of $[n]$
there are exactly $\lambda$ members of $\mathcal{G}$ containing $T$.
The existence of certain designs was established by Keevash \cite{keevash2014existence}.

For $\ell\ge 1$ and $r\ge 2$ a \emph{tight $\ell$-path} $P_{\ell}^{r}$ is an $r$-graph with edge set
$\{v_{i}v_{i+1}\ldots v_{i+r-1}: 1\le i\le \ell\}$.
Let $ex\left(n,P_{\ell}^{r}\right)$ denote the maximum number of edges in an $n$-vertex $P_{\ell}^{r}$-free $r$-graph.
Notice that an $r$-graph $\mathcal{G}$ on $[n]$ is $P_{2}^{r}$-free if and only if every $(r-1)$-subset of $[n]$
is contained in at most one edge in $\mathcal{G}$.
Therefore, we have $ex\left(n,P_{2}^{r}\right)\le \frac{1}{r}\binom{n}{r-1}$.
On the other hand, by results in \cite{keevash2014existence}, for infinitely many $n$,
an $(r-1)$-$(n,r,1)$-design exists and, hence, we know that $ex\left(n,P_{2}^{r}\right)\ge \frac{1}{r}\binom{n}{r-1}$
holds for infinitely many $n$.

\medskip

Now we are ready to state our results formally.

\begin{theorem}\label{thm-3-cluster}
There exist two constants $c_1,c_2$ that are  only related to $k,\nu$ and satisfying
\[
\max\left\{(k-1)ex\left(\nu,P_{2}^{3}\right),2(k-1)\lf\frac{\nu}{2}\rf \right\}\le c_1\le c_2\le \frac{k}{3}\binom{\nu}{2}+(k-1)\nu
\]
such that
\[
f(n,k,3,\nu)\ge \binom{n-k\nu-1}{k-1}+\lf\frac{\nu}{2}\rf\binom{n-k\nu-1}{k-3}+c_1\binom{n-k\nu-1}{k-4}+\nu \text{ holds for all }n,
\]
and
\[
f(n,k,3,\nu)\le\binom{n-k\nu-1}{k-1}+\lf\frac{\nu}{2}\rf\binom{n-k\nu-1}{k-3}+(c_2+o(1))\binom{n-k\nu-1}{k-4}+M_3
\]
holds for sufficiently large $n$, where $M_3$ is a constant only related to $k,v$, and $M_3 \le f(k\nu,k,3,\nu-1)$.
\end{theorem}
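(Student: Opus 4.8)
\emph{The lower bound.} Fix pairwise disjoint $k$-sets $B_1,\dots,B_\nu$, put $W=B_1\cup\cdots\cup B_\nu$ with $|W|=k\nu$, and fix $x\notin W$. Start from
\[
\mathcal{F}_0=\{B_1,\dots,B_\nu\}\cup\Bigl\{F\in\binom{[n]}{k}:x\in F,\ F\cap W=\emptyset\Bigr\},
\]
which is $3$-cluster-free: among any three members, either two contain $x$ (so the common intersection contains $x$) or the union contains $B_i\cup B_j$, or $B_i$ together with a $W$-avoiding $k$-set, and since distinct $k$-sets have union of size at least $k+1$, the total union exceeds $2k$. Also $\nu(\mathcal{F}_0)\ge\nu+1$ (use $B_1,\dots,B_\nu$ and one $W$-avoiding edge through $x$) and $|\mathcal{F}_0|=\binom{n-k\nu-1}{k-1}+\nu$. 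We then adjoin edges of the form $\{x\}\cup P\cup T$ with $T\subseteq[n]\setminus(W\cup\{x\})$ and $P$ a small set of ``port'' vertices inside $W$; the task is to decide which families of port-sets can be used simultaneously without creating a $3$-cluster (all checks again reduce to the ``$x$ lies in two of the three sets'' principle and to union-size bounds against the $B_i$). Carrying this out: pairing $B_1,\dots,B_\nu$ into $\lfloor\nu/2\rfloor$ pairs and, for each pair, taking all edges with $P$ a single fixed $2$-element port-set yields $\lfloor\nu/2\rfloor\binom{n-k\nu-1}{k-3}$ new edges; on top of this one may add $3$-port families indexed either by (i) a $P_2^3$-free (equivalently, linear) $3$-graph on $\{1,\dots,\nu\}$ with $k-1$ ``diagonal'' port-triples per hyperedge, giving $(k-1)\,ex(\nu,P_2^3)\binom{n-k\nu-1}{k-4}$ edges, or by (ii) about $2(k-1)$ extra port-triples per pair inside $B_{2i-1}\cup B_{2i}$, giving $2(k-1)\lfloor\nu/2\rfloor\binom{n-k\nu-1}{k-4}$ edges. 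Taking $c_1$ to be the best coefficient of $\binom{n-k\nu-1}{k-4}$ attainable this way gives the claimed lower bound.

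\emph{The upper bound: reduction.} Let $\mathcal{F}$ be extremal. Since $|\mathcal{F}|\ge\binom{n-k\nu-1}{k-1}+\nu=(1-o(1))\binom{n-1}{k-1}$, the stability result of Mubayi in \cite{mubayiintersection4} supplies a vertex $x$ such that all but $o(\binom{n-1}{k-1})$ members of $\mathcal{F}$ contain $x$; write $\mathcal{F}_x=\{F\in\mathcal{F}:x\in F\}$ and $\mathcal{F}_{\bar x}=\mathcal{F}\setminus\mathcal{F}_x$. From $\nu+1$ pairwise disjoint members of $\mathcal{F}$, at most one contains $x$, so after relabelling $M_1,\dots,M_\nu\in\mathcal{F}_{\bar x}$ are pairwise disjoint; set $W=M_1\cup\cdots\cup M_\nu$ with $|W|=k\nu$. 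The plan is to prove
\[
|\mathcal{F}_x|\le\binom{n-k\nu-1}{k-1}+\Bigl\lfloor\frac{\nu}{2}\Bigr\rfloor\binom{n-k\nu-1}{k-3}+(c_2+o(1))\binom{n-k\nu-1}{k-4}
\]
together with $|\mathcal{F}_{\bar x}|\le M_3+o(\binom{n}{k-4})$, the two combining to give the theorem.

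\emph{The upper bound: structure of $\mathcal{F}_x$ and $\mathcal{F}_{\bar x}$.} The basic device is a supply of forbidden $3$-clusters built from the $M_i$. For instance, if $F\in\mathcal{F}_x$ has $F\cap W=\{v\}$ with $v\in M_i$ and $z\notin W\cup F$, then $\{F,\,M_i,\,(F\setminus\{v\})\cup\{z\}\}$ has union of size $2k$ and empty intersection, so $(F\setminus\{v\})\cup\{z\}\notin\mathcal{F}$; hence the $(k-1)$-set $F\setminus\{v\}$ extends to at most $k\nu$ members of $\mathcal{F}$. Likewise, if $F,F'\in\mathcal{F}_x$ satisfy $F\cap F'=\{x,v\}$ with $v\in M_i$ and $M_j\subseteq F\cup F'$ for some $j\neq i$, then $\{F,F',M_j\}$ is a $3$-cluster. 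Bootstrapping these (and their higher-uniformity analogues, run over the possible sizes of $F\cap W$) controls the $\mathcal{F}_x$-codegrees of the $1$-, $2$-, and $3$-subsets of $W$, and yields: (a) the members of $\mathcal{F}_x$ meeting $W$ in a number of vertices other than exactly $2$ number only $O(\binom{n}{k-4})$; (b) the $2$-subsets of $W$ with large $\mathcal{F}_x$-codegree form, essentially, a matching across the blocks $M_1,\dots,M_\nu$, hence at most $\lfloor\nu/2\rfloor$ of them, each of codegree at most $\binom{n-k\nu-1}{k-3}$; (c) the remaining meet-$W$-in-$2$ edges, together with the meet-$W$-in-$3$ edges, are counted by a Tur\'an-type estimate (a linear-$3$-graph, i.e.\ $\mathcal{H}_v^e$-type, bound on the admissible port-patterns, plus the per-pair extensions) by at most $\bigl(\frac{k}{3}\binom{\nu}{2}+(k-1)\nu+o(1)\bigr)\binom{n-k\nu-1}{k-4}$; set $c_2$ to be this constant. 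For $\mathcal{F}_{\bar x}$, opposite pairs of large-codegree edges through $x$ force every $G\in\mathcal{F}_{\bar x}$ to meet $W$, and in fact, apart from $o(\binom{n}{k-4})$ exceptions (absorbed into the $o(1)$ above), to be confined to a bounded ground set on which no second ``center'' can appear; the residual family is then a $3$-cluster-free $k$-graph on $k\nu$ vertices carrying the matching constraint inherited from $M_1,\dots,M_\nu$, so $|\mathcal{F}_{\bar x}|\le g(k\nu,k,3,\nu-1)+o(\binom{n}{k-4})=M_3+o(\binom{n}{k-4})$.

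\emph{Main obstacle.} The delicate step is (b)--(c): pinning down the \emph{exact} coefficient $\lfloor\nu/2\rfloor$ of the second-order term and the bound on $c_2$ of the third-order term. This requires an iterated codegree analysis inside $\mathcal{F}_x$, using $3$-clusters assembled from the $M_i$ and from pairs of high-codegree stars through $x$, and then re-casting the surviving ``admissible port configurations'' as an extremal problem for hypergraphs of bounded size of the $\mathcal{H}_v^e$ type introduced above; keeping every non-generic piece — both the stray meet-$W$ edges of $\mathcal{F}_x$ and the residual $\mathcal{F}_{\bar x}$ — genuinely of order $o(\binom{n}{k-4})$ or bounded is precisely what makes the two essentially matching bounds go through.
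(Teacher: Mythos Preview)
Your overall framework (stability to find a near-center $x$, pick $\nu$ disjoint $B_i\in\mathcal{F}(\bar x)$, set $I=\bigcup B_i$ and $U=[n]\setminus(\{x\}\cup I)$, and stratify $\mathcal{F}(x)$ by $|F\cap I|$) matches the paper, but two of your structural claims are wrong as stated, and the paper's argument differs precisely there. Your claim (a) is false: the number of $F\in\mathcal{F}_x$ with $|F\cap W|=1$ can be $\Theta\bigl(\binom{n}{k-2}\bigr)$, not $O\bigl(\binom{n}{k-4}\bigr)$. Your $3$-cluster observation that $(F\setminus\{v\})\cup\{z\}\notin\mathcal{F}$ is the right ingredient, but it does not bound that count --- it produces \emph{holes} in the star. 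The paper runs this as a cancellation (its ``bad set'' argument): each $j$-set $E\subset U$ that extends inside a single $B_i$ to a member of $\mathcal{F}$ destroys $\Theta(n^{k-1-j})$ pure-star edges $\{x\}\cup S$ with $E\subset S\subset U$, while producing only $O(1)$ members of $\mathcal{F}(x)$; the net is nonpositive, which is what pins the leading term to exactly $\binom{n-k\nu-1}{k-1}$.

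More seriously, your claim (b) --- that the $2$-subsets of $W$ with large $\mathcal{F}_x$-codegree form a matching across the blocks, hence at most $\lfloor\nu/2\rfloor$ of them --- is not true in general and is not how the coefficient $\lfloor\nu/2\rfloor$ arises. The paper instead fixes a $(k-3)$-set $C\subset U$ and proves (Lemma~3.1) that the link graph $\{e\in\binom{I}{2}:\{x\}\cup e\cup C\in\mathcal{F},\ e\not\subset B_i\text{ for any }i\}$ has at most one edge meeting each $B_i$, hence at most $\lfloor\nu/2\rfloor$ edges; summing over $C$ yields $\lfloor\nu/2\rfloor\binom{|U|}{k-3}$. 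The point is that the ``matching'' can change with $C$: globally one may have many cross-block pairs $e$, each with codegree $\Theta\bigl(\binom{n}{k-3}\bigr)$ supported on pairwise disjoint families of $C$'s, so your global codegree dichotomy fails and in particular loses the integrality that turns $\nu/2$ into $\lfloor\nu/2\rfloor$. The bound $c_2\le\frac{k}{3}\binom{\nu}{2}+(k-1)\nu$ is obtained the same way, via a per-$C$ analysis of the $3$-graph link for $|C|=k-4$ (Lemma~3.6), not a global codegree argument. Finally, the paper handles $\mathcal{F}(\bar x)$ by a hard dichotomy rather than your soft $M_3+o(\cdot)$: either $\mathcal{F}(\bar x)\subset\binom{I}{k}$ and then $|\mathcal{F}(\bar x)|\le M_3$ exactly, or one enlarges $I$ by a further $B_{\nu+1}\not\subset I$, the refined count forces $|\mathcal{F}(\bar x)|\ge c\binom{n-1}{k-2}$, and Lemma~2.6 then exhibits a $3$-cluster.
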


\begin{theorem}\label{thm-4-cluster}
There exist two constants $c'_1,c'_2\ge k\lf\frac{\nu^2}{4}\rf$ such that
\[
f(n,k,4,\nu) \ge \binom{n-k\nu-1}{k-1}+c'_1\binom{n-k\nu-1}{k-3} \text{ holds for all }n,
\]
and
\[
f(n,k,4,\nu)\le\binom{n-k\nu-1}{k-1}+c'_2\binom{n-k\nu-1}{k-3} \text{ holds for sufficiently large }n.
\]
In particular, if $\nu=1$, then
\[
f(n,k,4,1)\ge \binom{n-k-1}{k-1}+ex(n-k-1,P_{2}^{k-2})+1 \text{ holds for all }n.
\]
\end{theorem}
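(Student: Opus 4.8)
The plan is to establish this lower bound by an explicit construction, so the proof would consist of checking that the construction has the right size, has matching number at least $2$, and is $4$-cluster-free. Fix a $k$-set $A\subset[n]$, a vertex $a\in A$, and a vertex $x\in[n]\setminus A$, and put $V=[n]\setminus(A\cup\{x\})$, so $|V|=n-k-1$. Since $k\ge d=4$ we have $k-2\ge2$, so we may fix a $P_2^{k-2}$-free $(k-2)$-graph $G$ on the vertex set $V$ with exactly $ex(n-k-1,P_2^{k-2})$ edges. Define
\[
\mathcal{F}=\{A\}\ \cup\ \left\{B\in\binom{[n]}{k}:x\in B,\ B\cap A=\emptyset\right\}\ \cup\ \left\{\{a,x\}\cup e:e\in G\right\}.
\]
The three families on the right are pairwise disjoint (the middle sets avoid $A$, each set in the last family meets $A$ in exactly the vertex $a$, and $A$ does not contain $x$), and distinct edges of $G$ yield distinct sets, so $|\mathcal{F}|=\binom{n-k-1}{k-1}+ex(n-k-1,P_2^{k-2})+1$, matching the claimed bound. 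For $n$ large the middle family is nonempty, and $A$ is disjoint from each of its members, so $\nu(\mathcal{F})\ge2$. Hence everything reduces to showing that $\mathcal{F}$ is $4$-cluster-free.

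Every set of $\mathcal{F}\setminus\{A\}$ contains the vertex $x$, so any $4$-cluster in $\mathcal{F}$ must include $A$ --- otherwise $x$ lies in the intersection of the four sets --- and it includes $A$ exactly once, say as its fourth set; write the other three as $F_1,F_2,F_3$. I would then split into cases according to how many of $F_1,F_2,F_3$ are \emph{extra} sets, that is, of the form $\{a,x\}\cup e$ with $e\in G$. If all three are extra, then $a\in A\cap F_1\cap F_2\cap F_3$, so the four-fold intersection is nonempty and this is not a cluster. If at most one of them is extra, then at least two are ``star'' sets of the form $\{x\}\cup G_i$ with the $G_i$ distinct $(k-1)$-subsets of $V$, so
\[
|A\cup F_1\cup F_2\cup F_3|=(k+1)+|(F_1\cup F_2\cup F_3)\setminus(A\cup\{x\})|\ge(k+1)+|G_1\cup G_2|\ge(k+1)+k>2k,
\]
and again there is no cluster.

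The one case that genuinely uses the structure of $G$, and the place I expect to require the most care, is when exactly two of $F_1,F_2,F_3$ are extra --- say $F_2=\{a,x\}\cup e_2$ and $F_3=\{a,x\}\cup e_3$ with $e_2\neq e_3$ edges of $G$, and $F_1=\{x\}\cup G_1$ a star set. Here $|A\cup F_1\cup F_2\cup F_3|=(k+1)+|G_1\cup e_2\cup e_3|$, so I need $|G_1\cup e_2\cup e_3|\ge k$. Since $e_2$ and $e_3$ are distinct $(k-2)$-subsets of $V$, the only way this can fail is $e_2,e_3\subseteq G_1$; but two distinct $(k-2)$-subsets of a $(k-1)$-set intersect in exactly $k-3$ elements, so $\{e_2,e_3\}$ would be a copy of $P_2^{k-2}$ inside $G$, contradicting $P_2^{k-2}$-freeness. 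Thus $|A\cup F_1\cup F_2\cup F_3|\ge 2k+1$ here as well, completing the verification that $\mathcal{F}$ is $4$-cluster-free. The crux of the whole argument is exactly this last reduction: the condition ``every $(k-3)$-subset of $V$ lies in at most one edge of $G$'' is precisely what is needed to block the only dangerous configuration, which is why $ex(n-k-1,P_2^{k-2})$ is the correct quantity to add to the star.
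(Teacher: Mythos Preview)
Your proposal is correct and follows essentially the same approach as the paper: your family $\mathcal{F}$ is exactly the paper's construction $\mathcal{L}_3$ (with $A,a,x,V,G$ playing the roles of $C_1,v,y,W,\mathcal{G}$), and your three-case verification of $4$-cluster-freeness is a more explicit version of the paper's Claim~9.3. The only organizational difference is that the paper handles your Cases~1 and~3 in one stroke---once at least two of $F_1,F_2,F_3$ are ``extra'' sets, $P_2^{k-2}$-freeness forces $|e_2\cup e_3|\ge k$ and hence $|A\cup F_2\cup F_3|\ge 2k+1$ regardless of what the third set is---so the intersection condition is never actually invoked.
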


\begin{theorem}\label{thm-d-cluster}
Suppose that $d\ge 5$. Then
\[
f(n,k,d,\nu)\ge \binom{n-k\nu-1}{k-1}+\nu EX^{k-2}\left(n-k\nu-1,\mathcal{H}_{k-1}^{d-2}\right)+\nu \text{ holds for all }n,
\]
and
\[
f(n,k,d,\nu)\le\binom{n-k\nu-1}{k-1}+(\nu+o(1))EX^{k-2}\left(n-k\nu-1,\mathcal{H}_{k-1}^{d-2}\right)
\]
holds for sufficiently large $n$.
\end{theorem}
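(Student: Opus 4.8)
\medskip
\noindent\textbf{Proof idea.}
The plan is to get the lower bound from an explicit construction and the upper bound by combining Mubayi's stability theorem \cite{mubayiintersection4} with the $\mathcal{H}_{k-1}^{d-2}$-free Tur\'an problem introduced above.

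\emph{Lower bound.} Fix pairwise disjoint $k$-sets $M_1,\dots,M_\nu$ and a vertex $x\notin S:=M_1\cup\cdots\cup M_\nu$, and for each $i$ let $\mathcal{G}_i$ be a largest $\mathcal{H}_{k-1}^{d-2}$-free $(k-2)$-multigraph on $[n]\setminus(S\cup\{x\})$. Since $k\ge d$, the maximum edge-multiplicity of $\mathcal{G}_i$ is at most $d-3\le|M_i|$, so for each edge $E$ of multiplicity $\ell_E$ we may choose $\ell_E$ distinct vertices $J_E\subseteq M_i$ and put $\mathcal{A}_i:=\{\{x,a\}\cup E: E\in\mathcal{G}_i,\ a\in J_E\}$, which has exactly $EX^{k-2}(n-k\nu-1,\mathcal{H}_{k-1}^{d-2})$ members. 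Set
\[
\mathcal{F}:=\Bigl\{A\in\tbinom{[n]\setminus S}{k}: x\in A\Bigr\}\ \cup\ \{M_1,\dots,M_\nu\}\ \cup\ \mathcal{A}_1\cup\cdots\cup\mathcal{A}_\nu .
\]
Then $|\mathcal{F}|$ equals the claimed lower bound, and $\nu(\mathcal{F})\ge\nu+1$ since the $M_i$ together with any edge through $x$ disjoint from $S$ form a matching. For $d$-cluster-freeness: the only $x$-free edges of $\mathcal{F}$ are $M_1,\dots,M_\nu$ and $|M_i\cup M_j|=2k$ when $i\ne j$, so a putative $d$-cluster uses exactly one $M_i$ (using none leaves $x$ in the common intersection; using two forces all $d$ edges into the $2k$-set $M_i\cup M_j$, which contains only $M_i,M_j\in\mathcal{F}$). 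The other $d-1$ edges contain $x$ and, with $M_i$, span at most $2k$ vertices, so they lie in $M_i\cup\{x\}\cup T$ for some $T$ with $|T|\le k-1$ and $T\cap(S\cup\{x\})=\emptyset$. But the edges of $\mathcal{F}$ inside $M_i\cup\{x\}\cup T$ are precisely $M_i$, the single star edge $\{x\}\cup T$ (only if $|T|=k-1$), and the edges $\{x,a\}\cup E\in\mathcal{A}_i$ with $a\in M_i$ and $E\subseteq T$; by $\mathcal{H}_{k-1}^{d-2}$-freeness of $\mathcal{G}_i$ there are at most $d-3$ of the last kind, so fewer than $d$ edges of $\mathcal{F}$ lie there and no $d$-cluster exists.

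\emph{Upper bound.} We may assume $|\mathcal{F}|\ge\binom{n-k\nu-1}{k-1}$, so $|\mathcal{F}|\ge\binom{n-1}{k-1}-o(n^{k-1})$ and Mubayi's stability theorem gives a vertex $x$ with $|\mathcal{F}\triangle\mathcal{S}_x|=o(n^{k-1})$, where $\mathcal{S}_x$ is the full star at $x$. Fix a matching $M_0,\dots,M_\nu\in\mathcal{F}$; after relabelling, $M_1,\dots,M_\nu$ avoid $x$, and put $S:=M_1\cup\cdots\cup M_\nu$, so $|S|=k\nu$ and $x\notin S$. Split $\mathcal{F}$ into Type~1 (edges through $x$ disjoint from $S$; at most $\binom{n-k\nu-1}{k-1}$ of them), Type~2 (edges through $x$ meeting $S$) and Type~3 (edges missing $x$); it suffices to bound the number of Type~2 and Type~3 edges by $(\nu+o(1))EX^{k-2}(n-k\nu-1,\mathcal{H}_{k-1}^{d-2})$. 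Reading the construction backwards supplies the key tool: if $W$ is a $(k-1)$-set with $W\cap(S\cup\{x\})=\emptyset$ and $\{x\}\cup W\in\mathcal{F}$, then for each $i$ there are, counting multiplicity in $a\in M_i$, fewer than $d-2$ sets $\{x,a\}\cup E\in\mathcal{F}$ with $E\subseteq W$, since otherwise $M_i$, $\{x\}\cup W$ and $d-2$ of them form a $d$-cluster. Hence the $(k-2)$-multigraph $\mathcal{G}_i$ on $[n]\setminus(S\cup\{x\})$ with $\mathrm{mult}_{\mathcal{G}_i}(E)=|\{a\in M_i:\{x,a\}\cup E\in\mathcal{F}\}|$ is $\mathcal{H}_{k-1}^{d-2}$-free except possibly at $(k-1)$-sets $W$ with $\{x\}\cup W\notin\mathcal{F}$; the Type~2 edges meeting $S$ in exactly one vertex are precisely those recorded by $\mathcal{G}_1,\dots,\mathcal{G}_\nu$, whose total size is therefore at most $\nu\,EX^{k-2}(n-k\nu-1,\mathcal{H}_{k-1}^{d-2})$ plus a correction controlled by the number of missing restricted star edges.

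What remains — and this is the main obstacle — is to show that all leftover edges (Type~2 edges meeting $S$ in at least two vertices, Type~3 edges, and the correction just mentioned) number only $o\bigl(EX^{k-2}(n-k\nu-1,\mathcal{H}_{k-1}^{d-2})\bigr)$, which forces the crude $o(n^{k-1})$ closeness of Mubayi's theorem to be bootstrapped to essentially $o(n^{k-3})$-level control of the deviation $\mathcal{F}\triangle\mathcal{S}_x$ near $S$. Concretely one shows, by exhibiting $d$ edges of $\mathcal{F}$ with empty common intersection inside suitable $2k$-sets and using the abundance of near-star edges, that $\mathcal{F}$ misses only negligibly many star edges through $x$ avoiding $S$, contains only boundedly many edges avoiding $x$, and has only negligibly many edges through $x$ meeting $S$ in two or more vertices; the delicate point throughout is controlling how the $o(n^{k-1})$ deviation can be locally concentrated, which is handled by iterating the stability estimate and is also the source of the $(\nu+o(1))$ factor rather than a clean $\nu$ (for $d=3,4$ the corresponding leftover is genuinely of order $n^{k-3}$ or larger, which is why Theorems~1.5 and~1.6 need separate treatments). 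Combining the trivial bound on Type~1 edges, the estimate $\sum_i|\mathcal{G}_i|\le\nu\,EX^{k-2}(n-k\nu-1,\mathcal{H}_{k-1}^{d-2})$ and these negligibility statements then yields the upper bound, matching the construction.
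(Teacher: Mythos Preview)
Your lower-bound construction is the same as the paper's $\mathcal{L}_5$, and your verification is almost right, but there is a small gap: when you assert that the remaining $d-1$ cluster edges lie in $M_i\cup\{x\}\cup T$ with $T\cap S=\emptyset$, you have not ruled out that one of them comes from some $\mathcal{A}_j$ with $j\ne i$ and therefore uses a vertex of $M_j\subseteq S\setminus M_i$. The paper (Claim~9.5) handles this by splitting on whether $|T|=k-1$ (then no such edge fits under the $2k$ bound) or $|T|=k-2$ (then at most one such edge fits, and the count still falls short of $d$). This is easy to repair.

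For the upper bound, your framework --- the multigraphs $\mathcal{G}_i$, the observation that every $(k-1)$-set $W$ with $\{x\}\cup W\in\mathcal{F}$ satisfies $|\mathcal{G}_i[W]|<d-2$, and the intended trade-off between excess $\mathcal{G}_i$-edges and missing star edges --- matches the paper exactly. But the mechanism you propose for closing the argument does not. You say the remaining obstacle is to ``bootstrap'' stability from $o(n^{k-1})$ to $o(n^{k-3})$-level control, in particular to show that $\mathcal{F}$ misses only negligibly many restricted star edges. That is not what the paper proves, and there is no reason to believe it is true. What the paper uses instead is \emph{supersaturation} (Lemma~2.2): if $g_i>(1+a)\,EX^{k-2}(n',\mathcal{H}_{k-1}^{d-2})$ for some fixed $a>0$, then $\mathcal{G}_i$ contains $\Omega_a(n^{k-1})$ copies of members of $\mathcal{H}_{k-1}^{d-2}$, each supported on a $(k-1)$-set $W$ which is then forced to satisfy $\{x\}\cup W\notin\mathcal{F}$. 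Since $EX^{k-2}(n',\mathcal{H}_{k-1}^{d-2})=\Theta(n^{k-2})$ for $d\ge5$, this $\Theta(n^{k-1})$ loss among Type~1 edges overwhelms the possible gain $a\nu\,EX=O(n^{k-2})$ in $\sum_i g_i$, forcing $g_i\le(1+o(1))\,EX$ after all. Without supersaturation, your ``correction controlled by the number of missing restricted star edges'' gives only a deletion bound of the shape $g_i\le EX+C\mu$ with $C>1$, which is useless against the $-\mu$ you save on Type~1 edges. (Likewise, showing there are only boundedly many Type~3 edges is not a matter of ``iterating the stability estimate''; in the paper it is the Case~2 analysis via Lemma~2.6.)
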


For the special case $\nu=1$, we have the following result.

\begin{theorem}\label{thm-3-cluster-exact}
For sufficiently large $n$, we have
\[
f(n,k,3,1)= \binom{n-k-1}{k-1}+1,
\]
with equality only for the disjoint union of a $k$-set and a star.
\end{theorem}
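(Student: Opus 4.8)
\medskip
\noindent\emph{Proof strategy.}

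For the lower bound, fix a $k$-set $A_0$ and a vertex $x\notin A_0$ and let $\mathcal{F}_0$ consist of $A_0$ together with every $k$-set containing $x$ and disjoint from $A_0$. Then $|\mathcal{F}_0|=\binom{n-k-1}{k-1}+1$ and $\nu(\mathcal{F}_0)\ge2$, and $\mathcal{F}_0$ is $3$-cluster-free: among three distinct members, if all lie in the star they share $x$, while otherwise one is $A_0$ and the other two are distinct $k$-sets disjoint from $A_0$, so their union has more than $2k$ vertices. Thus $f(n,k,3,1)\ge\binom{n-k-1}{k-1}+1$.

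For the upper bound, let $\mathcal{F}$ be $3$-cluster-free with $\nu(\mathcal{F})\ge2$; I may assume $|\mathcal{F}|\ge\binom{n-k-1}{k-1}+1$. I first reduce to $\nu(\mathcal{F})=2$: if $\nu(\mathcal{F})\ge3$ then $|\mathcal{F}|\le f(n,k,3,2)$, which by the $\nu=2$ case of Theorem~1.5 is at most $\binom{n-2k-1}{k-1}+O(n^{k-3})$, and this is below $\binom{n-k-1}{k-1}$ for large $n$ since $\binom{n-k-1}{k-1}-\binom{n-2k-1}{k-1}=\Theta(n^{k-2})$. Next, by the $\nu=1$ case of Theorem~1.5 together with the assumption, $\binom{n-k-1}{k-1}+1\le|\mathcal{F}|\le\binom{n-k-1}{k-1}+O(n^{k-4})$, so $|\mathcal{F}|=(1-o(1))\binom{n-1}{k-1}$ and Mubayi's stability theorem~\cite{mubayiintersection4} provides a vertex $x$ with $|\mathcal{F}_{\bar x}|=o(n^{k-1})$, where $\mathcal{F}_{\bar x}=\{S\in\mathcal{F}:x\notin S\}$ and $\mathcal{F}_x=\mathcal{F}\setminus\mathcal{F}_{\bar x}$. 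A disjoint pair of $\mathcal{F}$ cannot lie in the intersecting family $\mathcal{F}_x$, and not in $\mathcal{F}_{\bar x}$ either, for then (using $\nu(\mathcal{F})=2$) every member of $\mathcal{F}_x$ meets one of the two sets, forcing $|\mathcal{F}_x|\le 2\bigl(\binom{n-1}{k-1}-\binom{n-k-1}{k-1}\bigr)=O(n^{k-2})$, against $|\mathcal{F}_x|=\Theta(n^{k-1})$. So I may fix a disjoint pair $A\in\mathcal{F}_{\bar x}$, $B\in\mathcal{F}_x$ (thus $x\notin A$), and the same argument shows $\mathcal{F}_{\bar x}$ is intersecting.

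The core is a ``cluster completion'' step. Set $\mathcal{S}=\{S\in\mathcal{F}:x\in S,\ S\cap A=\emptyset\}$ and $\mathcal{F}^A_x=\{S\in\mathcal{F}:x\in S,\ S\cap A\neq\emptyset\}$, and let $d$ count the $k$-sets through $x$ and disjoint from $A$ that are \emph{not} in $\mathcal{F}$, so that $|\mathcal{F}|-\binom{n-k-1}{k-1}=|\mathcal{F}^A_x|+|\mathcal{F}_{\bar x}|-d$. Two elementary patterns drive things: (a) if $S\cap A\neq\emptyset$ and $T\supseteq S\setminus A$ with $T\cap A=\emptyset$ then $\{A,S,T\}$ is a $3$-cluster, since $|A\cup S\cup T|=2k$ and $A\cap S\cap T=\emptyset$; hence each $S\in\mathcal{F}^A_x$ forces all $\ge n-2k+1$ sets $(S\setminus A)\cup T'$ with $T'\subseteq[n]\setminus(A\cup S)$, $|T'|=|S\cap A|$, out of $\mathcal{F}$, each a would-be member of $\mathcal{S}$; (b) if $Z$ is a $(k-1)$-set with $x\in Z$, $s\neq t$, and $C\in\mathcal{F}_{\bar x}$ satisfies $C\cap Z=\emptyset$ and $\{s,t\}\cap C\neq\emptyset$, then $\{C,Z\cup\{s\},Z\cup\{t\}\}$ is a $3$-cluster; so for each $C\in\mathcal{F}_{\bar x}\setminus\{A\}$ and each of the $\Theta(n^{k-2})$ sets $Z\ni x$ with $Z\cap(A\cup C)=\emptyset$, either $Z$ has at most one extension in $\mathcal{F}_x$ (forcing $\ge n-2k$ would-be members of $\mathcal{S}$ out of $\mathcal{F}$) or $Z\cup\{c\}\notin\mathcal{F}$ for every $c\in C\setminus A$. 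Charging the missing members of $\mathcal{S}$ to the sets producing them with bounded multiplicity yields $d\ge cn\,|\mathcal{F}^A_x|$ and, if $\mathcal{F}_{\bar x}\neq\{A\}$, $d=\Omega(n^{k-2})$; combined with $1\le|\mathcal{F}|-\binom{n-k-1}{k-1}\le O(n^{k-4})$ and with the incompatibility of a huge $x$-star and a large structured $\mathcal{F}_{\bar x}$, this should force $\mathcal{F}^A_x=\emptyset$ and $\mathcal{F}_{\bar x}=\{A\}$. Then $\mathcal{F}=\mathcal{S}\cup\{A\}$ with $\mathcal{S}$ contained in the star at $x$ on $[n]\setminus A$, giving $|\mathcal{F}|\le\binom{n-k-1}{k-1}+1$, with equality precisely when $\mathcal{S}$ is that whole star, i.e.\ when $\mathcal{F}$ is the stated extremal family; with the lower bound this is the theorem. (For $k=3$ the upper bound is already immediate from the $\nu=1$ case of Theorem~1.5, whose lower-order terms vanish, so only the equality statement requires the structural analysis; for $k\ge4$ the cluster-completion step is needed to kill the lower-order error.)

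The one genuinely delicate point is the end of the cluster-completion step: the cluster verifications themselves are routine, but turning the charging inequalities into a contradiction requires an upper bound on $|\mathcal{F}_{\bar x}|$ much stronger than the a priori one ($\mathcal{F}_{\bar x}$ is merely intersecting, hence $\le\binom{n-2}{k-1}$), and I expect this can only be obtained by iterating the whole estimate---each round exploiting the now-denser $\mathcal{S}$ to sharpen the control on $\mathcal{F}_{\bar x}$ and $\mathcal{F}^A_x$---until no deviation from the construction remains.
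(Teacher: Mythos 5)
Your lower bound, your use of Mubayi's stability theorem to find the vertex $x$, and your two cluster patterns (a) and (b) are all correct, and the overall skeleton (star at $x$ plus a count of ``missing'' sets forced out of $\mathcal{F}$ by $3$-cluster-freeness) is the same as the paper's. The genuine gap is exactly the point you flag at the end: after stability you only know $|\mathcal{F}_{\bar x}|<\delta\binom{n-1}{k-1}$, i.e.\ $o(n^{k-1})$, while your charging argument yields only $d\ge cn\,|\mathcal{F}^A_x|$ and, when $\mathcal{F}_{\bar x}\neq\{A\}$, $d=\Omega(n^{k-2})$ --- the $\Omega(n^{k-2})$ does not scale with $|\mathcal{F}_{\bar x}|$ because the missing sets produced by different $C\in\mathcal{F}_{\bar x}$ overlap. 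Hence the identity $|\mathcal{F}|-\binom{n-k-1}{k-1}=|\mathcal{F}^A_x|+|\mathcal{F}_{\bar x}|-d$ gives no contradiction when $|\mathcal{F}_{\bar x}|$ lies between, say, $n^{k-2}$ and $\delta n^{k-1}$, and your proposed remedy (``iterate the whole estimate'') is not an argument: it is unclear what quantity improves from one round to the next, so the proof does not close.

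The paper fills precisely this hole with Lemma 2.6 (supported by Lemmas 2.4 and 2.5): if $m=|\mathcal{F}(\bar x)|\ge c\binom{n-1}{k-2}$ for any constant $c>0$, then one finds three pairwise disjoint $(k-2)$-sets of near-full codegree in $\mathcal{F}(x)$ and builds a $3$-cluster, a contradiction. Concretely, in the paper's Case~2 (some member of $\mathcal{F}(\bar x)$ not contained in the chosen disjoint set $B_1=A$), a bad-set count shows $|\mathcal{F}|\le\binom{n-k-1}{k-1}-\tfrac12\binom{n-k-2}{k-2}+m$, which forces $m\ge\tfrac14\binom{n-1}{k-2}$ and hence a $3$-cluster by Lemma 2.6; therefore $\mathcal{F}(\bar x)=\{A\}$, and only then does the remaining counting (your pattern (a), essentially the paper's ``bad sets'' through $x$ meeting $A$) give $|\mathcal{F}|\le\binom{n-k-1}{k-1}+1$ with equality only for the star plus $A$. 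To repair your write-up you need either to import this codegree lemma (or an equivalent statement showing $m=O(n^{k-2})$, and then a further step eliminating $\mathcal{F}_{\bar x}\setminus\{A\}$ entirely), or to supply a genuinely new argument in its place; as written, the decisive step is missing.
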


Theorem \ref{thm-3-cluster-exact} shows that Conjecture \ref{conj-Mammoliti-Britz} is true for $d=3$.
However, Theorems \ref{thm-4-cluster} and \ref{thm-d-cluster} imply
that Conjecture \ref{conj-Mammoliti-Britz} is false for $d\ge 4$.

Note that in \cite{mammoliti2017} Mammoliti and Britz also asked for
the maximum size of a $k$-uniform family that is $d$-cluster-free
but that is not $d$-wise intersecting.
Let $g(n,k,d,t)$ denote the maximum size of a $k$-uniform family $\mathcal{F}$ on $[n]$ that is $d$-cluster-free
but not $t$-wise intersecting.
i.e. for all distinct sets  $A_1,\ldots,A_d\in \mathcal{F}$, we have
$
A_1\cap \ldots \cap A_d\neq \emptyset \text{ whenever } |A_1\cup \ldots \cup A_d|\le 2k
$,
but there exist $t$ sets $A'_1,\ldots,A'_t\in \mathcal{F}$ such that $A'_1\cap \ldots \cap A'_t=\emptyset$.
Later, it will be shown that a family $\mathcal{F}$ that is $d$-cluster-free
but not $t$-wise intersecting and of large size is actually not intersecting.
Therefore, we have the following result.

\begin{theorem}\label{thm-d-cluste-d-wise}
The equation $g(n,k,d,t)=f(n,k,d,1)$ holds for sufficiently large $n$.
\end{theorem}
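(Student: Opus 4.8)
The plan is to prove the two inequalities $g(n,k,d,t)\ge f(n,k,d,1)$ and $g(n,k,d,t)\le f(n,k,d,1)$ separately, the first by exhibiting a suitable family and the second by showing that a large $d$-cluster-free family which is not $t$-wise intersecting cannot be intersecting. Throughout I assume $t\ge 2$ (for $t=1$ the notion "not $t$-wise intersecting" is empty) and $n$ large.

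For the lower bound, I would take a family $\mathcal{F}_0\subset\binom{[n]}{k}$ of size $f(n,k,d,1)$ that is $d$-cluster-free with $\nu(\mathcal{F}_0)\ge 2$. By Theorems 1.5--1.8 (or, more directly, the elementary construction consisting of one fixed $k$-set $K$ together with all $k$-sets $S$ with $x\in S$ and $S\cap K=\emptyset$, where $x\notin K$, which is $d$-cluster-free for every $d\ge 3$), one has $f(n,k,d,1)\ge\binom{n-k-1}{k-1}+1$, and in particular $|\mathcal{F}_0|\ge t$ for $n$ large. Fix two disjoint members $A,B\in\mathcal{F}_0$ and any further $t-2$ distinct members; these $t$ distinct sets have empty intersection, so $\mathcal{F}_0$ is not $t$-wise intersecting. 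Hence $\mathcal{F}_0$ witnesses $g(n,k,d,t)\ge|\mathcal{F}_0|=f(n,k,d,1)$.

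For the reverse inequality, let $\mathcal{F}$ be $d$-cluster-free and not $t$-wise intersecting with $|\mathcal{F}|=g(n,k,d,t)$; by the bound just proved $|\mathcal{F}|\ge\binom{n-k-1}{k-1}+1$. The key step is the claim that $\mathcal{F}$ cannot be intersecting. Indeed, if it were, then since $\mathcal{F}$ is not $t$-wise intersecting it can have no common vertex (a common vertex would lie in every $t$-tuple of members), so $\bigcap_{F\in\mathcal{F}}F=\emptyset$; the Hilton--Milner theorem then forces $|\mathcal{F}|\le\binom{n-1}{k-1}-\binom{n-k-1}{k-1}+1$, which for large $n$ is strictly less than $\binom{n-k-1}{k-1}+1$ since $\binom{n-1}{k-1}<2\binom{n-k-1}{k-1}$ once $n$ is large compared to $k$ — a contradiction. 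Therefore $\mathcal{F}$ is not intersecting, i.e. $\nu(\mathcal{F})\ge 2$, and being $d$-cluster-free with matching number at least $2$ it satisfies $|\mathcal{F}|\le f(n,k,d,1)$ by definition. Combining the two inequalities gives $g(n,k,d,t)=f(n,k,d,1)$.

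The proof is short, and essentially the only real content is the quantitative comparison in the last paragraph: one needs an upper bound for intersecting families with no common vertex (Hilton--Milner suffices) that is strictly below the construction lower bound $\binom{n-k-1}{k-1}+1$ for $f(n,k,d,1)$, which holds once $n$ is large. Everything else is bookkeeping, and no relation between $t$ and $d$ is needed.
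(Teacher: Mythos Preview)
Your proof is correct, and your route differs from the paper's in one clean way. For the upper bound the paper does not invoke Hilton--Milner; instead it applies the stability theorem (Theorem~2.3) to the extremal family $\mathcal{K}$ to find a vertex $z$ with $|\mathcal{K}(\bar z)|<\delta'\binom{n-1}{k-1}$, observes $\mathcal{K}(\bar z)\neq\emptyset$ (else $z$ is a common vertex, contradicting ``not $t$-wise intersecting''), picks $D\in\mathcal{K}(\bar z)$, and then counts: if every member of $\mathcal{K}(z)$ met $D$ one would get $|\mathcal{K}|\le\binom{n-1}{k-1}-\binom{n-k-1}{k-1}+\delta'\binom{n-1}{k-1}<\binom{n-k-1}{k-1}$, a contradiction, so $\mathcal{K}$ contains two disjoint sets. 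Your argument bypasses stability entirely: once you note that ``not $t$-wise intersecting'' forces $\bigcap\mathcal{F}=\emptyset$, Hilton--Milner directly bounds any intersecting candidate by $\binom{n-1}{k-1}-\binom{n-k-1}{k-1}+1$, and the same numerical comparison $\binom{n-1}{k-1}<2\binom{n-k-1}{k-1}$ finishes it. Your approach is more elementary and self-contained (it relies only on a 1967 result rather than the 2007 stability machinery already set up for the other theorems); the paper's approach has the virtue of reusing tools already in place. The lower-bound halves are essentially identical.
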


The remaining part of this paper is organized as follows.
First we present some preliminary definitions and lemmas in Section 2.
Since the proofs of these lemmas are basically the same as their original form,
we include most of them in Appendix A.
In Section 3 we will present the proofs of Theorems \ref{thm-3-cluster} and \ref{thm-3-cluster-exact}.
Then, we will present the proof of Theorem \ref{thm-4-cluster} in Section 4,
and the proof of Theorem \ref{thm-d-cluster} will be included in Section 5.
We will prove Theorem \ref{thm-d-cluste-d-wise} in Section 6, and include some remarks in Section 7.
In order to give a lower bound for $f(n,k,d,\nu)$,
we give several constructions of families that are $d$-cluster-free with a matching number at least $\nu +1$.
Since it is quite straightforward to check that these families are $d$-cluster-free with a matching number at least $\nu +1$,
we include this part in Appendix B.

\section{Preliminaries}
Let $S, T$ be two subsets of a set $V$.
Then we use $S - T$ to denote the set $S \setminus T$.
If $T$ contains only one element, say $y$, then sometimes we simply use $y$ instead of $\{y\}$ to represent the set $T$.
Let $\mathcal{G}$ be an $r$-graph (or $r$-multigraph).
We use $V(\mathcal{G})$ to denote the vertex set of $\mathcal{G}$.
The \textit{shadow} $\partial\mathcal{G}$ of $\mathcal{G}$ is defined by
\[
\partial\mathcal{G} = \left\{ A\in \binom{V(\mathcal{G})}{r-1}: \exists B\in \mathcal{G} \text{ such that } A\subset B \right\}.
\]
Let $S$ be a subset of $V(\mathcal{G})$.
We will use $\mathcal{G}[S]$ to denote the induced subgraph of $\mathcal{G}$ on $S$.

For  $r$-graphs, it is well known that the \textit{Tur\'{a}n density}
$\pi\left(H_{v}^{e}\right):=\lim_{n\to\infty}ex\left(n,H_{v}^{e}\right)/\binom{n}{r}$ exists,
and have {the Supersaturation Lemma} (we refer the reader to a detailed survey of hypergraph Tur\'{a}n problems by Keevash \cite{keevash2011hypergraph}).
A similar result is also true for $r$-multigraphs.
\begin{lemma}\label{lemma-multi-Turan-density-exist}
The limit $\lim_{n\to \infty}EX^{r}\left(n,\mathcal{H}_{v}^{e}\right)/\binom{n}{r}$ exists.
\end{lemma}
\begin{proof}
Let $\mathcal{G}$ be an $n$-vertex $\mathcal{H}_{v}^{e}$-free $r$-multigraph with $EX(n,\mathcal{H}_{v}^{e})$ edges.
Choose an $(n-1)$-subset $S$ of $V(\mathcal{G})$ uniformly at random.
For every edge $E\in \mathcal{G}$, the probability that $E$ is contained in $S$ is $(n-r)/n$.
So, the expected number of edges in $S$ is $\left( (n-r)/n \right) EX(n,\mathcal{H}_{v}^{e})$.
Therefore, there exists a set $S$ of size $n-1$ with at least $\left( (n-r)/n \right) EX(n,\mathcal{H}_{v}^{e})$ edges in $\mathcal{G}[S]$.
Since $\mathcal{G}[S]$ is also $\mathcal{H}_{v}^{e}$-free, we therefore have that $\left( (n-r)/n \right) EX(n,\mathcal{H}_{v}^{e})\le EX(n-1,\mathcal{H}_{v}^{e})$.
It follows that
\[
\frac{EX(n,\mathcal{H}_{v}^{e})}{\binom{n}{r}}\le \frac{EX(n-1,\mathcal{H}_{v}^{e})}{\binom{n-1}{r}}.
\]
So $ EX(n,\mathcal{H}_{v}^{e})/\binom{n}{r} $ is non-increasing respect to $n$,
and this implies the existence of the limit
$\lim_{n\to \infty}EX^{r}(n,\mathcal{H}_{v}^{e})/\binom{n}{r}$.
\end{proof}

Define the Tur\'{a}n density $\Pi\left(\mathcal{H}_{v}^{e}\right)$ of $\mathcal{H}_{v}^{e}$ as $\Pi\left(\mathcal{H}_{v}^{e}\right)=\lim_{n\to \infty}EX^{r}\left(n,\mathcal{H}_{v}^{e}\right)/\binom{n}{r}$.
Notice that in the proof of Lemma \ref{lemma-multi-Turan-density-exist},
we showed that $EX^{r}\left(n,\mathcal{H}_{v}^{e}\right)/\binom{n}{r}$
is non-increasing respect to $n$. Therefore, we have
$
EX^{r}\left(n,\mathcal{H}_{v}^{e}\right)\le \left( EX^{r}\left(v,\mathcal{H}_{v}^{e}\right)/\binom{v}{r} \right) \binom{n}{r}< \left( e/\binom{v}{r}\right)\binom{n}{r}
$.
On the other hand, since
every $H_{v}^{e}$-free $r$-graph is also an $\mathcal{H}_{v}^{e}$-free $r$-multigraph,
we have $EX^{r}(n,\mathcal{H}_{v}^{e})\ge ex^{r}(n,H_{v}^{e})$.

\begin{lemma}[Supersaturation]\label{lemma-supersaturation}
For any $\mathcal{H}_{v}^{e}$ and any $a>0$, there exist $b>0$ and $n_0$ such that
any $r$-multigraph $\mathcal{G}$ on $n>n_0$ vertices with at least $\left(\Pi(\mathcal{H}_{v}^{e})+a\right)\binom{n}{r}$ edges
contains at least $b\binom{n}{v}$ copies of elements in $\mathcal{H}_{v}^{e}$.
Moreover, we have $b\ge (a/2)/\binom{M}{v}$, where $M$ is the smallest integer satisfying both $M\ge \max\{r,v\}$
and $EX\left(M,\mathcal{H}_{v}^{e}\right)\le \left(\Pi(\mathcal{H}_{v}^{e})+a/2\right)\binom{M}{r}$.
\end{lemma}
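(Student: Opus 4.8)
The plan is to transfer the classical Erd\H{o}s--Simonovits supersaturation argument to the multigraph setting, with Lemma 2.1 playing the role of the existence of the Tur\'{a}n density. First I would fix the parameter $M$. By Lemma 2.1 the ratio $EX(n,\mathcal{H}_v^e)/\binom{n}{k}$ is non-increasing in $n$ and tends to $\Pi(\mathcal{H}_v^e)$, so $EX(M,\mathcal{H}_v^e)\le(\Pi(\mathcal{H}_v^e)+a/2)\binom{M}{k}$ for all sufficiently large $M$; take the smallest such $M$ with $M\ge\max\{k,v\}$, set $b=(a/2)/\binom{M}{v}$, and let $n_0$ be any integer with $n_0\ge M$ (so that the binomial identities below are valid).

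Next, given a $k$-multigraph $\mathcal{G}$ on $n>n_0$ vertices with $e(\mathcal{G})>(\Pi(\mathcal{H}_v^e)+a)\binom{n}{k}$, I would count edges inside $M$-subsets. Each colored edge of $\mathcal{G}$ lies in exactly $\binom{n-k}{M-k}$ of the sets $T\in\binom{[n]}{M}$, so
\[
\sum_{T\in\binom{[n]}{M}}e(\mathcal{G}[T])=e(\mathcal{G})\binom{n-k}{M-k}>(\Pi(\mathcal{H}_v^e)+a)\binom{M}{k}\binom{n}{M},
\]
using $\binom{n}{k}\binom{n-k}{M-k}=\binom{n}{M}\binom{M}{k}$. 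Call $T$ \emph{heavy} if $e(\mathcal{G}[T])>(\Pi(\mathcal{H}_v^e)+a/2)\binom{M}{k}\ge EX(M,\mathcal{H}_v^e)$; then $\mathcal{G}[T]$ is not $\mathcal{H}_v^e$-free, so $\mathcal{G}[T]$, and hence $\mathcal{G}$, contains a copy of some member of $\mathcal{H}_v^e$. Bounding $e(\mathcal{G}[T])\le\binom{M}{k}$ on the non-heavy $T$'s and subtracting from the displayed inequality gives, in one line of averaging, that the number of heavy $T$ is at least $(a/2)\binom{n}{M}$.

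Finally I would pass back to a count of copies. A fixed copy of a member of $\mathcal{H}_v^e$ spans $v$ vertices, hence sits inside $\mathcal{G}[T]$ for exactly $\binom{n-v}{M-v}$ of the sets $T$. Double counting pairs (copy, heavy $T$), if $N$ denotes the number of copies in $\mathcal{G}$ then $N\binom{n-v}{M-v}\ge(a/2)\binom{n}{M}$, and since $\binom{n}{M}\binom{M}{v}=\binom{n}{v}\binom{n-v}{M-v}$ this rearranges to $N\ge(a/2)\binom{n}{v}/\binom{M}{v}=b\binom{n}{v}$, which is the assertion.

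The only step that is not literally the classical proof is the bound $e(\mathcal{G}[T])\le\binom{M}{k}$ used to produce many heavy $T$: it is immediate for $r$-graphs but fails for multigraphs of large multiplicity. The fix is to use that the multigraphs to which the lemma is applied have edge-multiplicity bounded by a constant $D$ depending only on $k$ and $v$ (which holds for the links arising in the $d$-cluster-free setting), whereupon $e(\mathcal{G}[T])\le D\binom{M}{k}$ and the same computation yields $b\ge(a/(2D))/\binom{M}{v}$; in the multiplicity-one case one recovers exactly $b=(a/2)/\binom{M}{v}$. I expect no genuine difficulty beyond this bookkeeping and keeping track of the binomial identities and of the dependence of $n_0$ on $M$.
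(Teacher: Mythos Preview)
Your argument follows the standard Erd\H{o}s--Simonovits double count over $M$-subsets and is essentially line-for-line the paper's own proof. One slip: the crude bound $e(\mathcal{G}[T])\le\binom{M}{k}$ is needed on the \emph{heavy} $T$'s, not the non-heavy ones---the non-heavy $T$ already satisfy $e(\mathcal{G}[T])\le(\Pi(\mathcal{H}_v^e)+a/2)\binom{M}{k}$ by definition, and it is the heavy sets whose total contribution must be capped in order to force at least $(a/2)\binom{n}{M}$ of them.

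You are right that this step is the one genuine issue in the multigraph setting, and the paper's proof invokes the very same inequality $e(\mathcal{G}[M])\le\binom{m}{k}$ without comment, so it shares the gap you flag. Your proposed fix---an a priori multiplicity bound $D$---is exactly what the application in Section~5 supplies (there the multigraphs $\mathcal{G}_i$ have multiplicity at most $|B_i|=k$); it weakens the ``Moreover'' constant to $(a/(2D))/\binom{M}{v}$, but only the positivity of $b$ as a function of $k,d,\nu$ is used downstream.
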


Let $\mathcal{F}\subset \binom{[n]}{k}$ and $x\in[n]$, define
$
\mathcal{F}(x)=\left\{F\in\mathcal{F}: x\in F\right\}
$
and
$
\mathcal{F}(\bar{x})=\left\{F\in\mathcal{F}: x\not\in F\right\}
$.
The following stability theorem for $d$-cluster-free families is an important tool in our proofs.

\begin{theorem}[Stability, \cite{mubayiintersection4}]\label{thm-stability-Mubayi}
Fix $2\le d\le k$. For every $\delta>0$, there exists $\epsilon>0$ and $n_0$ such that
the following holds for all $n>n_0$. Suppose that $\mathcal{F}\subset\binom{[n]}{k}$ is
a $d$-cluster-free family. If $|\mathcal{F}|\ge (1-\epsilon)\binom{n-1}{k-1}$,
then there exists a vertex $x\in[n]$ such that
$
\left|\mathcal{F}(\bar{x})\right|<\delta\binom{n-1}{k-1}
$.
\end{theorem}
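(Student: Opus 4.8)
The plan is to deduce this stability statement from the exact theorem for $d$-cluster-free families---Mubayi's conjecture in the form $|\mathcal{F}|\le\binom{n-1}{k-1}$ with the star as the unique extremizer, known for large $n$ by \cite{mubayi2009set,furedi2011cluster}---via a supersaturation argument. Fix $\delta>0$, suppose $\mathcal{F}$ is $d$-cluster-free with $|\mathcal{F}|\ge(1-\epsilon)\binom{n-1}{k-1}$, let $x$ be a vertex of maximum degree, and split $\mathcal{F}=\mathcal{F}(x)\cup\mathcal{F}(\bar x)$. Writing $R=\mathcal{F}(\bar x)$ and $m=\binom{n-1}{k-1}-|\mathcal{F}(x)|$ for the deficiency of the star at $x$, the goal is $|R|<\delta\binom{n-1}{k-1}$, which I would establish by contradiction from $|R|\ge\delta\binom{n-1}{k-1}$. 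The case $d=2$ needs no work: the Hilton--Milner theorem forces $|\mathcal{F}|=o\left(\binom{n-1}{k-1}\right)$ as soon as $\mathcal{F}$ has no common element, so near-extremal intersecting families are exact stars. Thus I would concentrate on $d\ge 3$.

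The engine is a cluster gadget attached to each $R_0\in R$. Since $x\notin R_0$, for any $(k-1)$-set $Z$ disjoint from $R_0\cup\{x\}$ the set $A_2=\{x\}\cup Z$ misses $R_0$, the window $W=R_0\cup A_2$ has exactly $2k$ vertices, and any $d-2$ further distinct $k$-subsets $A_3,\dots,A_d$ of $W$ through $x$ yield a $d$-cluster $\{R_0,A_2,\dots,A_d\}$: its union lies in $W$, hence has size at most $2k$, while its intersection is empty because $A_2\cap R_0=\emptyset$. This gives $\Theta(n^{k-1})$ distinct $d$-clusters through $R_0$ whose remaining members all lie in the star at $x$; as $\mathcal{F}$ is $d$-cluster-free and $R_0\in\mathcal{F}$, each must be blocked by one of these members being a $k$-set through $x$ not in $\mathcal{F}$ (a \emph{missing} set). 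Double counting the pairs (blocked cluster, missing set) and sorting the missing sets by their intersection with $R_0$, one sees that a missing set blocks $\Theta(n^{k-1})$ of the clusters of $R_0$ only when it sits inside $R_0\cup\{x\}$ (there are just $k$ such candidates), whereas every other missing set blocks only $O(n^{k-2})$. Consequently, when $m=o(n)$, every $R_0\in R$ must contain a missing set inside $R_0\cup\{x\}$; since a fixed such set lies inside $R_0\cup\{x\}$ for at most $n-k$ choices of $R_0$, this yields $|R|\le m(n-k)=o(n^2)=o\left(\binom{n-1}{k-1}\right)$, contradicting $|R|\ge\delta\binom{n-1}{k-1}$.

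The main obstacle is the complementary regime, in which the deficiency $m$ is itself a positive fraction of $\binom{n-1}{k-1}$; this is compatible with $|\mathcal{F}|$ being near-extremal precisely when $|R|$ is large, so it cannot be dismissed. For $d\ge 3$ one also cannot rescue the situation by a Hilton--Milner-type gap: the lower-bound constructions for $f(n,k,d,\nu)$ with $\nu\ge 1$ stated above are near-extremal $d$-cluster-free families of covering number at least two (they contain two disjoint sets), so the conclusion genuinely must permit a small positive fraction of sets to avoid the center, and the argument has to locate the center and bound the deficiency at the same time. To close the large-$m$ case I would push the exact theorem back into the proof quantitatively: either extract a defect refinement from the $\Delta$-system (sunflower) method of Frankl and F\"uredi \cite{frankl1983new} and Mubayi, so that displacing mass away from a single star strictly lowers the bound below $\binom{n-1}{k-1}$, or localize to the $2k$-vertex windows in which every cluster must live and combine the window-wise extremal estimate with an iterative cleaning that discards the few heavy missing directions. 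Turning this into the quantitative implication that $|R|\ge\delta\binom{n-1}{k-1}$ together with $d$-cluster-freeness forces $|\mathcal{F}|<(1-\epsilon)\binom{n-1}{k-1}$ is where the real difficulty concentrates.
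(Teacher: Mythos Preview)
The paper does not prove this statement at all: Theorem 2.3 is quoted from \cite{mubayiintersection4} and used as a black box throughout, so there is no in-paper proof to compare your proposal against.

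That said, your sketch has a real gap, and you have correctly located it. Your gadget-and-counting argument disposes of the regime $m=o(n)$, but the complementary regime---where the star at the max-degree vertex $x$ already misses a positive fraction of $\binom{n-1}{k-1}$---is exactly the content of the stability theorem, and you do not close it. Appealing to the exact theorem does not help here: the exact bound $|\mathcal{F}|\le\binom{n-1}{k-1}$ gives no leverage once $|\mathcal{F}(x)|$ is only $(1-\delta)\binom{n-1}{k-1}$, and in fact in \cite{mubayi2009set,furedi2011cluster} the logical order is the reverse---stability is established first (by $\Delta$-system/kernel arguments in the style of Frankl--F\"uredi) and the exact result is then derived from it. So the plan ``deduce stability from the exact theorem via supersaturation'' is circular as stated. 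A viable route is the one you gesture at in your last paragraph: run the sunflower/kernel machinery directly on $\mathcal{F}$ to show that most edges share a common kernel vertex, which is precisely what Mubayi does in \cite{mubayiintersection4}; but that is an independent argument, not a consequence of the exact theorem.
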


Now let $\mathcal{F}$ be a $d$-cluster-free family with a matching number at least
$\nu+1$ and of size exactly $f(n,k,d,\nu)$.
In order to apply Theorem \ref{thm-stability-Mubayi} to $\mathcal{F}$, we need a lower bound for $f(n,k,d,\nu)$.
So, let us give a simple construction of a $d$-cluster-free family $\mathcal{S}_{\nu}$ with a matching number exactly $\nu+1$.

Fix a vertex $y\in[n]$, and choose $\nu$ disjoint sets $C_1,\ldots,C_{\nu}$ from $\binom{[n]-y}{k}$.
Let $J=\bigcup_{i=1}^{\nu}C_i$ and $W=[n]-y-J$.
Let
\[
\mathcal{S}_{\nu} = \left\{\{y\} \cup A: A\in\binom{W}{k-1}\right\}\cup \{C_1,\ldots,C_{\nu}\}.
\]
Note that the size of $\mathcal{S}_{\nu}$ is $\binom{n-k\nu-1}{k-1}+\nu$.
Therefore, we have $f(n,k,d,\nu)\ge \binom{n-k\nu-1}{k-1}+\nu$.

For fixed $\nu$ and $k$ we have $\lim_{n\to \infty}\binom{n-k\nu-1}{k-1}/\binom{n-1}{k-1}=1$.
Choose $\delta>0$ to be sufficiently small, which will be determined later in the proof of Lemma \ref{lemma-Mubayi-m-large-d-cluster},
and let $\epsilon, n_0$ be given by Theorem \ref{thm-stability-Mubayi}.
Let $n$ be sufficiently large so that $n>n_0$ and $\binom{n-k\nu-1}{k-1}>(1-\epsilon)\binom{n-1}{k-1}$.
By Theorem \ref{thm-stability-Mubayi}, there exists a vertex $x\in [n]$ such that $|\mathcal{F}(\bar{x})|<\delta\binom{n-1}{k-1}$.
Since $\mathcal{F}$ contains at least $\nu+1$ pairwise disjoint sets,
we know that $\mathcal{F}(\bar{x})$ contains at least $\nu$ pairwise disjoint sets.
So we can choose $\nu$ pairwise disjoint sets $B_1,\ldots,B_{\nu}$ from $\mathcal{F}(\bar{x})$.
Let $I=\bigcup_{i=1}^{\nu}B_i$ and $U=[n]-x-I$.
Let $m=|\mathcal{F}(\bar{x})|$ and note that $m<\delta\binom{n-1}{k-1}$.
Actually, the following lemmas will show that if $m\ge c\binom{n-1}{k-2}$ holds
for some absolute constant $c>0$,
then there exists a $d$-cluster in $\mathcal{F}$, which contradicts our assumption.

\begin{lemma}[\cite{mubayi2009set}]\label{lemma-mubayi-bipartite-d-cluster}
Fix $2\le d\le k$, $1\le p\le k$, and $k<u_1 \le n/2$ with $n$ sufficiently large.
Suppose that $[n]$ has a partition $U_1\cup U_2$, $u_1=|U_1|$, $u_2=|U_2|$ and $\mathcal{F}$ is a
collection of $k$-sets of $[n]$ such that $|F\cap U_1|=p$ for every $F\in \mathcal{F}$.
If $\mathcal{F}$ contains no $d$-cluster, then $|\mathcal{F}|\le ku_1^{p-1}u_2^{k-p}$.
\end{lemma}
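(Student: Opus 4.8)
The plan is to prove the contrapositive: if $\mathcal{F}\subseteq\binom{[n]}{k}$ is $d$-cluster-free and $|F\cap U_1|=p$ for every $F\in\mathcal{F}$, then $|\mathcal{F}|\le k u_1^{p-1}u_2^{k-p}$. Assume instead $|\mathcal{F}|>k u_1^{p-1}u_2^{k-p}$; the task is to locate a $d$-cluster in $\mathcal{F}$. A few cases are disposed of at once. If $u_1\le k\,p!\,(k-p)!$, then $u_1$ is bounded in terms of $k$ and the claim follows from $|\mathcal{F}|\le\binom{u_1}{p}\binom{u_2}{k-p}\le\tfrac{u_1^{p}}{p!}\,u_2^{k-p}\le k u_1^{p-1}u_2^{k-p}$; so from now on $u_1$ (hence $u_2\ge u_1$, and $n$) exceeds any constant depending on $k$ that the argument needs. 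If $p=k$ the statement asserts that a $d$-cluster-free family of $k$-subsets of a $u_1$-set has at most $u_1^{k-1}$ members, which follows from the Erd\H{o}s--Ko--Rado / $d$-cluster theorems recalled in the Introduction (or trivially for bounded $u_1$); and $d=2$, where $\mathcal{F}$ is merely intersecting, can be handled by a short direct argument via Erd\H{o}s--Ko--Rado applied to the links below. Henceforth $3\le d\le k$ and $1\le p\le k-1$.

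For a $p$-set $S\subseteq U_1$ let $\mathcal{L}(S)=\{F\cap U_2:F\in\mathcal{F},\ F\cap U_1=S\}\subseteq\binom{U_2}{k-p}$ be its \emph{link}. The links partition $\mathcal{F}$, so $\sum_S|\mathcal{L}(S)|=|\mathcal{F}|$. Call $S$ \emph{heavy} if $|\mathcal{L}(S)|>\binom{u_2-1}{k-p-1}$. Since $u_1\le u_2$, the light traces together contribute $\binom{u_1}{p}\binom{u_2-1}{k-p-1}\le\tfrac{1}{k\,p!\,(k-p-1)!}\,|\mathcal{F}|$, a fixed fraction of $|\mathcal{F}|$ bounded away from $1$; hence the heavy traces carry a positive proportion of $|\mathcal{F}|$, and as each link has at most $\binom{u_2}{k-p}$ edges there are at least $c\,u_1^{p-1}$ heavy traces, with $c=c(k)>0$. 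For $u_1$ large this beats the Erd\H{o}s--Ko--Rado bound $\binom{u_1-1}{p-1}$ for an intersecting family of $p$-sets, so among the heavy traces there are two disjoint ones, $S$ and $S'$.

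The cluster is assembled as follows. Being heavy, $\mathcal{L}(S)$ is not intersecting (Erd\H{o}s--Ko--Rado again, as $u_2\ge 2(k-p)$), so it contains two disjoint edges $R_1,R_2$; put $Y=R_1\cup R_2$, a $2(k-p)$-set in $U_2$. Suppose in addition that $\mathcal{L}(S)$ contains $d-3$ further distinct edges $R_3,\dots,R_{d-1}\subseteq Y$ and that $\mathcal{L}(S')$ contains an edge $R_d\subseteq Y$. Then the $d$ sets $S\cup R_1,\dots,S\cup R_{d-1},\,S'\cup R_d$ are pairwise distinct (the first $d-1$ have $U_1$-trace $S$ and distinct $U_2$-parts, the last has $U_1$-trace $S'\ne S$), their union equals $(S\cup S')\cup Y$ of size $2p+2(k-p)=2k$, and their common intersection is empty: no element of $U_1$ survives because the $U_1$-traces include both $S$ and $S'$ with $S\cap S'=\emptyset$, and no element of $U_2$ survives because $R_1\cap R_2=\emptyset$. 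This is a $d$-cluster --- a contradiction. If $k-p$ is so small that $\binom{2(k-p)}{k-p}<d-1$, so that $\mathcal{L}(S)$ cannot furnish $d-1$ distinct $(k-p)$-subsets of $Y$, one runs the mirror construction with the roles of $U_1$ and $U_2$ interchanged; this is always available because, since $d\le k$, at least one of $\binom{2(k-p)}{k-p}$ and $\binom{2p}{p}$ is at least $\binom{k}{\lceil k/2\rceil}\ge k\ge d$.

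The step I expect to be the crux --- and the main obstacle --- is guaranteeing, in the previous paragraph, the \emph{joint} existence of a disjoint pair $R_1,R_2$ in a heavy link $\mathcal{L}(S)$, of $d-3$ more edges of $\mathcal{L}(S)$ inside $R_1\cup R_2$, and of an edge of a second heavy link $\mathcal{L}(S')$ ($S'$ disjoint from $S$) inside $R_1\cup R_2$. A single heavy link, though larger than the Erd\H{o}s--Ko--Rado bound, still has only order $u_2^{k-p-1}$ edges --- a negligible fraction of $\binom{u_2}{k-p}$ --- so one cannot simply point to many of its edges inside a fixed bounded ground set; the choice of $R_1,R_2$ (equivalently of the ground set $Y$) must be coordinated with everything else. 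I would carry this out by a double count over the $2(k-p)$-subsets $W$ of $U_2$: the heavy traces collectively put $\Omega(u_1^{p-1}u_2^{k-p})$ edges into their links, every two edges of a link lie in a common $2(k-p)$-set, and there is an abundance of disjoint heavy pairs of $p$-sets; feeding these facts into the supersaturation lemma applied to the $(k-p)$-graphs $\mathcal{L}(S)$ (or into an elementary pigeonhole when $k-p$ is a small constant) should force some $W$ to carry exactly the edge pattern the gadget needs. With such a $W$ the $d$-cluster is produced, contradicting $|\mathcal{F}|>k u_1^{p-1}u_2^{k-p}$ and completing the proof.
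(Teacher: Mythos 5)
Your preparatory steps are essentially sound: the reduction to large $u_1$, the $d=2$ case, the count showing light traces carry only a $1/(k\,p!\,(k-p-1)!)$ fraction of $\mathcal{F}$, the conclusion that there are $\Omega(u_1^{p-1})$ heavy traces and hence, by Erd\H{o}s--Ko--Rado, two disjoint heavy traces $S,S'$, and the verification that $S\cup R_1,\dots,S\cup R_{d-1},S'\cup R_d$ with $R_1\cap R_2=\emptyset$ and all $R_i\subseteq Y=R_1\cup R_2$ would indeed be a $d$-cluster. (One caveat: settling $p=k$ by invoking the full $d$-cluster theorem for large ground sets is circular relative to \cite{mubayi2009set}, where the present lemma is an ingredient of that theorem's proof, even if inside this paper it is only an external citation.)

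The step you yourself flag as the crux is, however, a genuine gap and not a deferred routine verification, and the repair you sketch cannot work as described. Heaviness of $\mathcal{L}(S)$ means only $|\mathcal{L}(S)|>\binom{u_2-1}{k-p-1}$, i.e.\ edge density of order $1/u_2$ in $\binom{U_2}{k-p}$; this is nowhere near any Tur\'an density, so the supersaturation lemma (Lemma 2.2) does not apply to a heavy link, and a double count over the $2(k-p)$-subsets $W$ of $U_2$ gives an expected number of link edges per window tending to $0$, so averaging cannot force even two prescribed edges of $\mathcal{L}(S)$, let alone $d-1$ of them together with an edge of a second link $\mathcal{L}(S')$, into one window. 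Concretely, a link consisting of a star through a fixed vertex of $U_2$ plus one extra edge is heavy, and for a given disjoint pair $(S,S')$ nothing forces any window of $\mathcal{L}(S)$ to meet $\mathcal{L}(S')$ in an edge; whether some pair among the many heavy traces must admit your pattern is exactly the combinatorial content that remains to be proved, and since the lemma only promises \emph{some} $d$-cluster, your particular two-trace configuration might even fail to exist in families above the bound, so the plan may need restructuring rather than merely sharper counting. Note also that the paper itself contains no proof of this statement to compare against: it is quoted verbatim from \cite{mubayi2009set} (Appendix A reproves Lemmas 2.1, 2.2, 2.5 and 2.6 but not this one), and the argument there is a different, direct one that does not pass through this gadget.
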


The original form of the next lemma is Claim 1 in \cite{mubayiintersection4}.
Note that it is assumed  in the proof of Claim 1 that the size of $\mathcal{F}$ is at least $\binom{n-1}{k-1}$.
However, in our proof, we can only assume that $|\mathcal{F}|\ge \binom{n-k\nu-1}{k-1}+\nu$.
So we add an extra assumption that $m\ge c\binom{n-1}{k-2}$ holds for some constant $c>0$ in the next lemma,
and the conclusion is also sightly different from that in Claim 1.
\begin{lemma}\label{lemma-Mubayi-3-dijoint-sets}
Suppose that $m\ge c\binom{n-1}{k-2}$ holds for some constant $c>0$.
Then, there are pairwise disjoint
$(k-2)$-sets $S_1,S_2,S_3\subset [n] - x$ such that for each $i$
\[
d_{\mathcal{F}(x)}(S_i):=|\{y\in[n]: \{x,y\}\cup S_i\in \mathcal{F}\}|\ge n-k+1- \frac{\left(k^2/c+2k\right)m}{\binom{n-1}{k-2}}.
\]
\end{lemma}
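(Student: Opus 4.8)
The plan is to follow the averaging/pigeonhole strategy used for Claim 1 in \cite{mubayiintersection4}, but to track constants carefully so that the hypothesis $m\ge c\binom{n-1}{k-2}$ does the work that the stronger bound $|\mathcal{F}|\ge\binom{n-1}{k-1}$ did there. First I would set up the counting. For a $(k-2)$-set $S\subset[n]\setminus\{x\}$ with $\{x\}\cup S$ extendable, write $d_{\mathcal{F}(x)}(S)=|\{y:\{x,y\}\cup S\in\mathcal{F}\}|$, and similarly let $d_{\mathcal{F}(\bar x)}(S)$ count pairs $y,z$ with $\{y,z\}\cup S\in\mathcal{F}(\bar x)$ in the obvious way; the point is that $\sum_S d_{\mathcal{F}(x)}(S)$ and the analogous sum over $\mathcal{F}(\bar x)$ are both controlled — the first is roughly $\binom{k}{2}|\mathcal{F}(x)|$ and the second is at most $\binom{k}{2}m$. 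I would call a $(k-2)$-set $S$ \emph{good} if $d_{\mathcal{F}(x)}(S)\ge n-k+1-t$ for the threshold $t=(k^2/c+2k)m/\binom{n-1}{k-2}$, and \emph{bad} otherwise.

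The key step is to show that \emph{most} $(k-2)$-sets are good — precisely, that the number of bad $(k-2)$-subsets of $[n]\setminus\{x\}$ is $o\big(\binom{n}{k-2}\big)$, in fact a $(1/(2k))$ fraction suffices — so that one can greedily pick $S_1,S_2,S_3$ pairwise disjoint among the good ones. To prove this, suppose $S$ is bad. Then there are more than $t$ vertices $y$ with $\{x,y\}\cup S\notin\mathcal{F}$. I would argue that each such "missing" extension must be compensated either by an edge of $\mathcal{F}(\bar x)$ sitting above $S$ (i.e. $S$ has positive $\mathcal{F}(\bar x)$-degree) or by the deficiency $\binom{n-1}{k-1}-|\mathcal{F}(x)|$ being large; combined with $|\mathcal{F}(x)|=|\mathcal{F}|-m\ge\binom{n-k\nu-1}{k-1}+\nu-m$, this forces the total deficiency of $\mathcal{F}(x)$ relative to the full star at $x$ to be at most $O(m\,n^{k-3})$, hence $\sum_{S\ \mathrm{bad}} (\text{deficiency at }S)\le O(m\,n^{k-3})$. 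Since each bad $S$ contributes more than $t$ to this sum and $t\asymp m/\binom{n-1}{k-2}\asymp m\,n^{-(k-2)}$, the number of bad $S$ is at most $O\!\big(m\,n^{k-3}/t\big)=O(n^{k-1}\cdot n^{-1})=O(n^{k-1})$ — wait, this needs care: the correct bookkeeping gives bad-count $\le \binom{n-1}{k-2}/(2k)$, which is the genuinely delicate constant chase and is where the exact form of $t$ (with the $k^2/c$ term) is forced. That is the main obstacle: balancing the contribution of $\mathcal{F}(\bar x)$ (bounded using $m$) against the contribution of the star-deficiency (bounded using $|\mathcal{F}|\ge\binom{n-k\nu-1}{k-1}+\nu$, which is weaker than $\binom{n-1}{k-1}$ by a $\Theta(n^{k-2})$ term, hence the need to absorb it into the $c$-dependent slack), so that fewer than $\binom{n-1}{k-2}/(2k)$ sets are bad.

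Finally, once at most $\binom{n-1}{k-2}/(2k)$ of the $(k-2)$-subsets of $[n]\setminus\{x\}$ are bad, I would extract three pairwise disjoint good sets by a trivial greedy argument: the number of $(k-2)$-sets meeting a fixed union of two $(k-2)$-sets is $O(n^{k-3})=o\big(\binom{n-1}{k-2}\big)$, so after choosing $S_1$ and $S_2$ good and disjoint, the good sets disjoint from $S_1\cup S_2$ are still nonempty, giving $S_3$. This yields the three sets $S_1,S_2,S_3$ with the stated degree bound, completing the proof. \qed
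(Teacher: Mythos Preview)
Your overall plan---averaging to show that a positive fraction of $(k-2)$-subsets of $[n]\setminus\{x\}$ have high $\mathcal{F}(x)$-degree, then greedily extracting three pairwise disjoint ones---is exactly the paper's approach. The paper computes directly: from $(k-1)|\mathcal{F}(x)|=\sum_{T}d_{\mathcal{F}(x)}(T)$ together with $|\mathcal{F}(x)|\ge \binom{n-k\nu-1}{k-1}-m$ and the choice $\tau=(k^2/c+2k)m/\binom{n-1}{k-2}$, one obtains that the number $t$ of good sets satisfies $\tau t\ge km$, hence $t\ge \frac{1}{k/c+2}\binom{n-1}{k-2}$, and then a matching/greedy argument (identical to yours) finishes.

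Two points in your write-up should be corrected. First, the quantity $d_{\mathcal{F}(\bar x)}(S)$ plays no role here: missing extensions $\{x,y\}\cup S\notin\mathcal{F}$ are not ``compensated'' by edges of $\mathcal{F}(\bar x)$ above $S$; the only way $m$ enters is through $|\mathcal{F}(x)|=|\mathcal{F}|-m$ and the hypothesis $m\ge c\binom{n-1}{k-2}$, which absorbs the $\Theta(n^{k-2})$ deficit $\binom{n-1}{k-1}-\binom{n-k\nu-1}{k-1}$. Drop that detour entirely. Second, your target ``bad-count $\le \binom{n-1}{k-2}/(2k)$'' is too strong and is not what the averaging delivers: the actual conclusion is only that the \emph{good} count is at least $\frac{1}{k/c+2}\binom{n-1}{k-2}$, a possibly small but positive constant fraction depending on $c$. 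Fortunately your greedy step only needs a positive fraction (since sets meeting a fixed $2(k-2)$-set number $O(n^{k-3})$), so once you fix the target the argument goes through.
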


The proof of the next lemma  appeared in \cite{mubayi2009set} as a part of the proof of its main theorem.
For completeness, we state it formally as a lemma and include its proof in Appendix A.
\begin{lemma}[\cite{mubayi2009set}]\label{lemma-Mubayi-m-large-d-cluster}
Suppose that $m\ge c\binom{n-1}{k-2}$ holds for some constant $c>0$.
Then, there is a $d$-cluster in $\mathcal{F}$.
\end{lemma}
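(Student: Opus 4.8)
The plan is a direct construction: assuming the hypothesis $m \ge c\binom{n-1}{k-2}$ (which, together with the standing fact $m < \delta\binom{n-1}{k-1}$, confines $m$ to a narrow window), I will exhibit $d$ distinct members of $\mathcal{F}$ forming a $d$-cluster. First apply Lemma 2.6 to obtain three pairwise disjoint $(k-2)$-sets $S_1,S_2,S_3\subset[n]\setminus\{x\}$ with $d_{\mathcal{F}(x)}(S_i)\ge n-k+1-\beta$, where $\beta=(k^2/c+2k)m/\binom{n-1}{k-2}<(k^2/c+2k)\delta(n-k+1)/(k-1)$ (this is where the smallness of $\delta$ is used): taking $\delta$ small enough we may assume $\beta\le\eta n$ with $\eta$ as small as we wish, so that, writing $N(S)=\{y\in[n]:\{x,y\}\cup S\in\mathcal{F}\}$, each $N(S_i)$ omits at most $\eta n$ vertices.

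The crux is the following claim: there are $A\in\mathcal{F}(\bar x)$ and an index $i$ with $A\cap S_i=\emptyset$ and $|A\cap N(S_i)|\ge d-2$. Given this, put $S=S_i$, choose distinct $y_1,\dots,y_{d-2}\in A\cap N(S)$ and a further vertex $y_{d-1}\in N(S)\setminus(A\cup S\cup\{x\})$ (possible since $|N(S)|\ge(1-\eta)n$ and $|A\cup S\cup\{x\}|\le 2k$), and set $A_1=A$ and $A_j=\{x,y_{j-1}\}\cup S$ for $2\le j\le d$. These are $d$ distinct $k$-sets of $\mathcal{F}$; their union equals $A\cup S\cup\{x,y_{d-1}\}$, which (using $A\cap S=\emptyset$, $x\notin A\cup S$, and $y_1,\dots,y_{d-2}\in A$) has exactly $k+(k-2)+2=2k$ vertices; and their intersection is $A\cap(\{x\}\cup S)=\emptyset$. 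Hence $\{A_1,\dots,A_d\}$ is a $d$-cluster in $\mathcal{F}$.

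So it remains to prove the claim. The reason Lemma 2.6 is invoked for \emph{three} disjoint good sets is that a $k$-set meeting all of $S_1,S_2,S_3$ contains at least three vertices of $S_1\cup S_2\cup S_3$, so there are only $O(\binom{n-1}{k-3})=o(m)$ of them; therefore some $A_0\in\mathcal{F}(\bar x)$ avoids one of the $S_i$, and applying Lemma 2.6 (or its proof) inside $[n]\setminus(\{x\}\cup A_0)$ we may further assume all three $S_i$ are disjoint from this fixed $A_0$. The real content is then the codegree bound $|A\cap N(S_i)|\ge d-2$ for \emph{some} $A\in\mathcal{F}(\bar x)$. If this failed for every $A\in\mathcal{F}(\bar x)$ and every $i$, then each such $A$ would have at least $k-d+3$ of its vertices in the small set $\overline{N(S_i)}$; averaging over the (almost all) good $(k-2)$-sets disjoint from $A$ gives $\sum_{a\in A}\mathrm{def}(a)=\Omega((k-d+3)\binom{n-1}{k-2})$ for almost every $A\in\mathcal{F}(\bar x)$, where $\mathrm{def}(a)$ is the codegree deficiency of $a$ in the link of $x$. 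Since $\mathcal{F}(x)$ has more than $(1-2\delta)\binom{n-1}{k-1}$ edges, the total deficiency $\sum_v\mathrm{def}(v)$ is only $o(n)\binom{n-2}{k-2}$, so a double count forces a vertex $v^*$ of small link-degree to lie in $\omega\!\left(\binom{n-2}{k-3}\right)$ members of $\mathcal{F}(\bar x)$; one then passes to the $(k-1)$-uniform family $\{A\setminus\{v^*\}:v^*\in A\in\mathcal{F}(\bar x)\}$, repeats the analysis a uniformity lower, and later reattaches $v^*$ (and $x$) to reassemble a $k$-cluster. I expect this last part — ruling out the scenario in which $\mathcal{F}(\bar x)$ ``hides'' from every high-codegree $(k-2)$-set by clustering on vertices of low link-degree, and pushing the bookkeeping through the iteration — to be the main obstacle; it is precisely the technical heart of Mubayi's argument in \cite{mubayi2009set}, which is why the excerpt defers the proof to that reference.
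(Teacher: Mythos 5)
The first half of your argument is exactly the paper's: given $A\in\mathcal{F}(\bar{x})$ and an index $i$ with $A\cap S_i=\emptyset$ and $|A\cap N(S_i)|\ge d-2$, the sets $A$ together with $\{x,y_j\}\cup S_i$ for distinct $y_1,\dots,y_{d-2}\in A\cap N(S_i)$ and a further $y_{d-1}\in N(S_i)\setminus(A\cup S_i\cup\{x\})$ form a $d$-cluster, and this is precisely the construction in the paper's proof (Appendix A; note that the three disjoint high-degree $(k-2)$-sets come from Lemma 2.5, not Lemma 2.6). The genuine gap is in the second half: you reduce everything to the existence of such a pair $(A,i)$, but in the case where it might fail you give only a sketch (a ``codegree deficiency'' double count, then passing to the link of a low-degree vertex $v^{*}$, repeating the analysis one uniformity lower, and ``reattaching'' $v^{*}$ and $x$), and you explicitly concede this is the main obstacle and defer it to the reference. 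Nothing in that sketch explains how, after descending to a $(k-1)$-uniform family, one recovers $d$ members of the \emph{original} family with union of size at most $2k$ and empty intersection, and the averaging step is not quantified; since the paper does prove this lemma in Appendix A, deferring the crux to \cite{mubayi2009set} leaves the proof incomplete.

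What the paper actually does to close that case is short and uses the already-stated Lemma 2.4, which you never invoke. Let $B=\{y\in[n]:\{x,y\}\cup S_i\notin\mathcal{F}\ \text{for some}\ i\in[3]\}$ (so $S_1,S_2,S_3\subset B$), padded so that $|B|=3k+(3k^2/c+6k)m/\binom{n-1}{k-2}=O(\delta n)$. If some $A\in\mathcal{F}(\bar{x})$ satisfies $|A\cap B|\le 2$, then since the $S_i$ are pairwise disjoint subsets of $B$ some $S_i$ is disjoint from $A$, and every vertex of $A\setminus B$ lies in $N(S_i)$ with $|A\setminus B|\ge k-2\ge d-2$; this yields your crux claim at once (and makes your separate count of sets meeting all three $S_i$, and the re-application of the degree lemma inside $[n]\setminus(\{x\}\cup A_0)$, unnecessary). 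Otherwise every member of $\mathcal{F}(\bar{x})$ meets $B$ in at least $3$ vertices, so $|\mathcal{T}_p|\ge m/(k-2)$ for some $3\le p\le k$, where $\mathcal{T}_p=\{T\in\mathcal{F}(\bar{x}):|T\cap B|=p\}$; Lemma 2.4 applied with $U_1=B$ gives $m/(k-2)\le k|B|^{p-1}n^{k-p}$, and combining $|B|=O\bigl(m/\binom{n-1}{k-2}\bigr)$ with $c\binom{n-1}{k-2}\le m\le \delta n\binom{n-1}{k-2}$ forces $\delta$ to exceed an absolute constant depending only on $c$ and $k$, contradicting the choice of $\delta$. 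So the missing case is settled by elementary counting plus Lemma 2.4 — no iteration over uniformities is needed — and it is exactly this step that your proposal leaves unproved.
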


Before presenting our proofs,
we would like to remaind the reader that
in the proof of the upper bound for $f(n,k,d,\nu)$,
we always assume that $n$ is sufficiently large.
Our constructions are obtained from $\mathcal{S}_{\nu}$ by adding some extra $k$-sets.
We will continue using the notations $y, J, W$ and $C_1,\ldots,C_{\nu}$ in the lower bound parts,
and continue using the notations $x, I, U$ and $B_1,\ldots,B_{\nu}$ in the upper bound parts.

\section{Proofs of Theorems \ref{thm-3-cluster} and \ref{thm-3-cluster-exact}}
The proof of Theorem \ref{thm-3-cluster} is consisting of two parts.
In the first part, we present two constructions to give two lower bounds for $f(n,k,3,\nu)$.
In the second part, we prove the upper bound for $f(n,k,3,\nu)$.
\subsection{Lower Bound}
Before presenting our constructions we would like to remind the reader that
the family $S_{\nu}$ is the disjoint union of a star and $\nu$ pairwise disjoint $k$-sets $C_{1},\ldots,C_{\nu}$,
and the definition of $S_{\nu}$ can be found below Theorem \ref{thm-stability-Mubayi} in Section 2.

\noindent$\bullet$ \textbf{First construction for $d=3$}.\\
Choose one vertex $v_i$ from each set $C_i$.
For every $\ell\in \{1,\ldots,\lf\nu/2\rf\}$ let $P_{\ell}=C_{2\ell-1}\cup C_{2\ell}$.
For every $i\in \{2,\ldots,k-1\}$ define
\[
\mathcal{G}_i=\left\{ A \in \bigcup_{\ell=1}^{\lf\nu/2\rf}\binom{P_{\ell}}{i}: \{v_{2\ell-1},v_{2\ell}\}\subset A \text{ for some } \ell\right\}.
\]
Let
\[
\mathcal{L}_1=\mathcal{S}_{\nu} \cup \left(\bigcup_{i=2}^{k-1}\left\{\{y\}\cup A\cup B: B\in\binom{W}{k-1-i} \text{ and } A\in \mathcal{G}_i\right\}\right).
\]
Note that the size of $\mathcal{G}_i$ is $\lf\frac{\nu}{2}\rf\binom{2k-2}{i-2}$ for all $i\in \{2,\ldots,k-1\}$.
Therefore, we have
\[
|\mathcal{L}_1|=\binom{n-k\nu-1}{k-1}+\sum_{i=2}^{k-1}\lf\frac{\nu}{2}\rf\binom{2k-2}{i-2}\binom{n-k\nu-1}{k-1-i}+\nu.
\]
Since $\mathcal{L}_1$ is a $3$-cluster-free family with $\nu(\mathcal{L}_1)=\nu+1$, we therefore have that
\[
f(n,k,3,\nu)\ge \binom{n-k\nu-1}{k-1}+\sum_{i=2}^{k-1}\lf\frac{\nu}{2}\rf\binom{2k-2}{i-2}\binom{n-k\nu-1}{k-1-i}+\nu.
\]

\noindent $\bullet$ \textbf{Second construction for $d=3$}.\\
Suppose that $C_i=\{c_1^i,\ldots,c_k^i\}$ for $1 \le i \le \nu$.
Then let $V_j=\{c_j^1,\ldots,c_j^{\nu}\}$ for every $j\in[k]$.
Let $G_1$ be the graph on $V_1$ with edge set $\left\{ \{c_{1}^{2i-1},c_{1}^{2i}\}: 1 \le i \le \lf\nu/2\rf \right\}$.
For every $j \in \{2,\ldots,k\}$ let $\mathcal{G}_j$ be a $P_{2}^{3}$-free $3$-graph on $V_j$ with exactly $ex(\nu,P_{2}^{3})$ edges.
Let
\[
\mathcal{L}'_2=\mathcal{S}_{\nu}\cup\left\{\{y\}\cup A\cup B: B\in\binom{W}{k-3} \text{ and } A\in E(G_1)\right\}.
\]
Then let
\[
\mathcal{L}_2=\mathcal{L}'_2\cup \left(\bigcup_{j=2}^{k}\left\{\{y\}\cup A\cup B: B\in\binom{W}{k-4} \text{ and } A\in\mathcal{G}_j\right\}\right).
\]
It is easy to see that
\[
|\mathcal{L}_2|=\binom{n-k\nu-1}{k-1}+\lf\frac{\nu}{2}\rf\binom{n-k\nu-1}{k-3}+(k-1)ex(\nu,P_{2}^{3})\binom{n-k\nu-1}{k-4}+\nu.
\]
Since $\mathcal{L}_2$ is a $3$-cluster-free family with $\nu(\mathcal{L}_2)=\nu+1$, we therefore have that
\[
f(n,k,3,\nu)\ge \binom{n-k\nu-1}{k-1}+\lf\frac{\nu}{2}\rf\binom{n-k\nu-1}{k-3}+(k-1)ex(\nu,P_{2}^{3})\binom{n-k\nu-1}{k-4}+\nu.
\]

\subsection{Upper Bound}
First we claim that $|F\cap F'|\le k-2$ holds for every $F\in \mathcal{F}(x)$ and every $F'\in \mathcal{F}(\bar{x})$.
Indeed, suppose that there exists an edge $F\in \mathcal{F}(x)$ and an edge $F'\in \mathcal{F}(\bar{x})$ such that $|F\cap F'|=k-1$.
Then for every set $S\in \binom{[n]-x-F'}{k-1}$ we have $\{x\}\cup S\not\in\mathcal{F}$,
since otherwise $\{x\}\cup S, F \text{ and } F'$ would form a $3$-cluster, a contradiction.
So in this case we would have
\[
\begin{split}
&|\mathcal{F}|\le \binom{n-1}{k-1}-\binom{n-k-1}{k-1}+\delta\binom{n-1}{k-1}<\binom{n-k\nu-1}{k-1},
\end{split}
\]
and this contradicts our assumption that $\mathcal{F}$ is of size $f(n,k,3,\nu)$.

Let $M_3$ be the maximum possible number of  sets in $\mathcal{F}$ that are completely contained in $I$,
and it is easy to see that $M_3 \le f(k\nu,k,3,\nu-1)$.
For every subset $C$ of $U$ that of size at most $k-2$ let
\[
\mathcal{F}'(C)=\left\{F-x-C: F\in\mathcal{F}(x)\ \text{and}\ F\cap U=C\right\},
\]
and let
$
\mathcal{F}(C)=\mathcal{F}'(C)-\bigcup_{i=1}^{\nu}\binom{B_i}{k-1-|C|}
$.
For every $j\in\{0,\ldots,k-1\}$ let
$$
\mathcal{F}_j=\left\{F\in\mathcal{F}(x): |F\cap I|=j \right\}.
$$

Intuitively, one can view $\mathcal{F}'(C)$ as the collection of neighbors of $C$ in $I$,
and view $|\mathcal{F}'(C)|$ as the degree of $C$ in $\mathcal{F}(x)$.
Our goal is to give an upper bound for $|\mathcal{F}(x)|$, and this is done by giving an upper
bound for each $|\mathcal{F}'(C)|$.

\begin{lemma}\label{lemma-3-F(C)-up-bound}
Let $C\in \binom{U}{k-3}$. Then $|\mathcal{F}(C)|\le \lf\frac{\nu}{2}\rf$.
\end{lemma}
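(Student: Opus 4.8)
The plan is to bound $|\mathcal{F}(C)|$ by exhibiting a $3$-cluster whenever $|\mathcal{F}(C)|$ is too large. Fix $C\in\binom{U}{k-3}$ and recall that elements of $\mathcal{F}(C)$ are the $2$-sets $F-x-C$ with $F\in\mathcal{F}(x)$, $F\cap U=C$, after deleting those $2$-sets that lie inside some $\binom{B_i}{2}$. So $\mathcal{F}(C)$ is a graph on the vertex set $I=\bigcup_{i=1}^\nu B_i$, none of whose edges is contained in a single $B_i$; in particular every edge of $\mathcal{F}(C)$ meets at least two of the blocks $B_1,\dots,B_\nu$. First I would observe that this graph must have a very restricted structure, essentially a matching across the blocks: the target bound $\lfloor\nu/2\rfloor$ is exactly the maximum size of a matching on $\nu$ ``super-vertices'' (one per block), which is the content of $ex(\nu,P_2^2)=\lfloor\nu/2\rfloor$-type counting. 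So the claim really says $\mathcal{F}(C)$, viewed as a graph on blocks, contains no two edges sharing a block, i.e. it is $P_2^2$-free at the block level.

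The key step is therefore: show that if $\mathcal{F}(C)$ contains two $2$-sets $\{a,b\}$ and $\{a,c\}$ with $a,b,c$ in pairwise distinct blocks (or more generally two crossing edges that share a block), then $\mathcal{F}$ contains a $3$-cluster. To do this I would take the two sets $F_1=\{x\}\cup C\cup\{a,b\}$ and $F_2=\{x\}\cup C\cup\{a,c\}$ in $\mathcal{F}(x)$, and the block $B_i$ containing, say, $b$; since $B_i\in\mathcal{F}(\bar x)$ has $k$ elements and $F_1,F_2$ use at most $k$ vertices outside $B_i$ each, the union $F_1\cup F_2\cup B_i$ has size at most $2k$ (here I would use that $C\subset U$ is disjoint from $I$, $a,c\notin B_i$, so $|F_1\cup F_2\cup B_i|\le 1+(k-3)+3+k = 2k+1$, and a short case analysis — using $b\in B_i$, or choosing $b$ to be the shared endpoint — pins it down to $\le 2k$). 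Then I would check the intersection $F_1\cap F_2\cap B_i$: it is contained in $B_i$ and in $\{x\}\cup C\cup\{a\}$, which is disjoint from $B_i$ because $x\notin I$, $C\subset U$, and $a$ is in a block different from $B_i$ — hence the triple intersection is empty, giving a genuine $3$-cluster and a contradiction. The same argument, choosing the shared vertex appropriately, rules out all configurations except a cross-block matching, which has at most $\lfloor\nu/2\rfloor$ edges; combined with the claim $|F\cap F'|\le k-2$ already established (so no edge of $\mathcal{F}(C)$ lies in a block, which is already folded into the definition of $\mathcal{F}(C)$), this yields $|\mathcal{F}(C)|\le\lfloor\nu/2\rfloor$.

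The main obstacle I anticipate is the bookkeeping of the union-size bound: one must be careful that $|F_1\cup F_2\cup B_i|\le 2k$ really holds, which forces a judicious choice of which of the two edges' shared vertex to route through which block, and a check that when the two crossing edges share their block-pair the overlap is large enough. A secondary subtlety is that $C$ might be such that the two sets $F_1,F_2$ coincide or that one of the $2$-sets has both endpoints in the same block — but the former is excluded because $b\neq c$ and the latter is excluded by passing from $\mathcal{F}'(C)$ to $\mathcal{F}(C)$, which deletes exactly those within-block pairs. Once the structural dichotomy (``matching across blocks'' vs.\ ``$3$-cluster exists'') is in place, the numerical bound $\lfloor\nu/2\rfloor$ is immediate, since a matching on the $\nu$ blocks has at most $\lfloor\nu/2\rfloor$ edges.
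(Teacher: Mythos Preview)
Your proposal is correct and follows essentially the same approach as the paper: view $\mathcal{F}(C)$ as a graph on $I$ in which each $B_i$ is an independent set, and whenever two edges $e_1,e_2$ meet a common block $B_i$, exhibit the $3$-cluster $\{B_i,\ \{x\}\cup C\cup e_1,\ \{x\}\cup C\cup e_2\}$, concluding that every block is incident to at most one edge and hence $|\mathcal{F}(C)|\le\lfloor\nu/2\rfloor$. The paper organizes the case analysis as two explicit sub-steps (first $e(B_i,B_j)\le 1$, then each $B_i$ meets at most one edge), and your union-size bookkeeping becomes exact once you use $b\in B_i$ to reduce your $2k+1$ to $2k$.
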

\begin{proof}
Let $C\in \binom{U}{k-3}$ and let $G$ denote the graph $\mathcal{F}(C)$.
Note that $G$ is a graph on $I$.
By the definition of $\mathcal{F}(C)$, we know that $B_i$ is an independent set in $G$ for $1\le i \le \nu$.

For every pair $\{i,j\} \subset \{1,\ldots,\nu\}$
let $E(B_i,B_j)$ denote the collection of edges in $G$ that have one endpoint in $B_i$ and the other endpoint in $B_j$,
and let $e(B_i, B_j)$ denote the size of $E(B_i,B_j)$.
First, we claim that $e(B_i,B_j)\le 1$ for every pair $\{i,j\} \subset \{1,\ldots,\nu\}$.
Indeed, suppose that  there are two edges $e_1,e_2\in E(B_i,B_j)$ for some pair $\{i,j\}$.
Assume that $e_1=\{b_1^i,b_1^j\},e_2=\{b_2^i,b_2^j\}$ and $b_1^{i},b_{2}^i\in B_i, b_1^{j},b_{2}^j\in B_j$.
We may assume that $b_1^i\neq b_2^i$, otherwise we consider $b_1^j \text{ and } b_2^j$ instead.
However, the three sets $B_i, \{x,b_1^i,b_1^j\}\cup C \text{ and } \{x,b_2^i,b_2^j\}\cup C$
form a $3$-cluster, a contradiction. Therefore, we have $e(B_i,B_j)\le 1$.
Next, we show that for every $i \in \{1,\ldots,\nu\}$ there is at most one edge that has nonempty intersection with $B_i$.
Indeed, suppose there are two edges $e_1,e_2$ such that $e_1\cap B_i\neq \emptyset$ and $e_2\cap B_i\neq\emptyset$.
Assume that $e_1=\{b_1^i,b_1^j\},e_2=\{b_2^i,b_2^k\}$ and $b_1^{i},b_{2}^i\in B_i, b_1^j\in B_j, b_2^k\in B_k$.
By the argument above, we know that $j\neq k$.
However,
if $b_1^i\neq b_2^i$, then $ B_i, \{x,b_1^i,b_1^j\}\cup C \text{ and } \{x,b_2^i,b_2^k\}\cup C$ form a $3$-cluster, a contradiction.
If $b_1^i= b_2^i$, then $B_j, \{x,b_1^i,b_1^j\}\cup C \text{ and } \{x,b_2^i,b_2^k\}\cup C$ form a $3$-cluster, a contradiction.
So every $B_i$ has nonempty intersection with at most one edge of $G$.
Therefore, we have $|\mathcal{F}(C)|=e(G)\le\lf\nu/2\rf$.
\end{proof}

Assume that $k\ge 4$. Let $C\in \binom{U}{k-4}$ and view $\mathcal{F}(C)$ as a $3$-graph on $I$.
By the definition of $\mathcal{F}(C)$, every $E\in \mathcal{F}(C)$ has nonempty intersection with at least two sets in $\{ B_1,\ldots,B_{\nu}\}$.
We call $E$ a \textit{long edge} if $E$ has nonempty intersection with three sets in $\{ B_1,\ldots,B_{\nu}\}$,
otherwise we call $E$ a \textit{short edge}.
Let $\mathcal{L}_{c}$ be the collection of all long edges in $\mathcal{F}(C)$ and
let  $\mathcal{S}_{c}$ be the collection of all short edges in $\mathcal{F}(C)$.
For every $i\in [\nu]$ let $G_i$ be the graph on $B_i$ with edge set $\partial\mathcal{S}_{c}\cap \binom{B_i}{2}$.
For every pair  $\{i,j\}\subset [\nu]$ let $G_{i,j}$ be the bipartite graph on $B_i\cup B_j$
with edge set $\partial\mathcal{L}_c\cap \binom{B_i\cup B_j}{2}$.

\begin{claim}\label{claim-3-v(G_i)<2}
The matching number of $G_i$ is at most one for every $i \in [\nu]$.
\end{claim}
\begin{proof}
Suppose there are two vertex disjoint edges $e_1,e_2 \text{ in } E(G_i)$ for some $i\in [\nu]$.
By the definition of $E(G_i)$,
there exist two sets $S_1,S_2\in \mathcal{S}_{c}$
such that $S_1\cap B_i=e_1$ and $S_2\cap B_i=e_2$.
However, the three sets $B_i,\{x\}\cup C\cup S_1 \text{ and } \{x\}\cup C\cup S_2$ form a $3$-cluster in $\mathcal{F}$,
a contradiction. Therefore, the matching number of $G_i$ is at most one.
\end{proof}

\begin{claim}\label{claim-3-e=S-cap-Bi}
For every $i\in[\nu]$ and every $e\in E(G_i)$ there is exactly one set $S\in \mathcal{S}_{c}$ such that $S\cap B_i=e$.
\end{claim}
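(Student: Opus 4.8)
The plan is to split the claim into the (trivial) existence part and the (substantive) uniqueness part. For existence I would simply unwind the definition: $E(G_i)=\partial(\mathcal{S}_c)\cap\binom{B_i}{2}$, so if $e\in E(G_i)$ there is some short edge $S\in\mathcal{S}_c$ with $e\subseteq S$; since $e\subseteq B_i$ we have $e\subseteq S\cap B_i$, and because $S$ is short we have $|S\cap B_i|\le 2$, which forces $S\cap B_i=e$. Thus at least one such $S$ exists, and all the real work lies in showing it is unique.

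For uniqueness I would argue by contradiction. Suppose $S_1\ne S_2$ are both short edges of $\mathcal{F}(C)$ with $S_1\cap B_i=S_2\cap B_i=e$. Then $S_1=e\cup\{a\}$ and $S_2=e\cup\{b\}$ with $a\ne b$, and since $|S_1\cap B_i|=|e|=2$ the vertex $a$ lies outside $B_i$, hence in some $B_j$ with $j\ne i$. The three $k$-sets I would use to produce a $3$-cluster in $\mathcal{F}$ are $F_1=\{x\}\cup C\cup S_1$, $F_2=\{x\}\cup C\cup S_2$ (both in $\mathcal{F}$ by the definitions of $\mathcal{F}(C)$ and $\mathcal{S}_c$), and $F_3=B_j$. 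I would then check three things. They are pairwise distinct: $F_1\ne F_2$ because $a\ne b$, and $F_3\not\ni x$ whereas $F_1,F_2\ni x$. Their common intersection is empty: an element lying in all three would lie in $B_j\cap F_1=\{a\}$ (using that $C\subset U$ is disjoint from $I$ and that $e\subset B_i$ is disjoint from $B_j$), but $a\notin F_2$ since $a\ne b$, $a\notin e$, $a\ne x$, and $a\notin C$. Finally, since $\{x\}$, $C$, $e$, $B_j$, $\{b\}$ are pairwise disjoint, the union $F_1\cup F_2\cup F_3=\{x\}\cup C\cup e\cup B_j\cup\{b\}$ has size $1+(k-4)+2+k+1=2k$. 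Hence $F_1,F_2,F_3$ form a $3$-cluster, contradicting that $\mathcal{F}$ is $3$-cluster-free.

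The only point that truly needs care is the choice of the third set: one must take $B_j$ for the specific index $j$ with $a\in B_j$. Taking $B_i$ instead (as in the neighbouring claim on the matching number of $G_i$) would leave $e$ inside the common intersection and yield no cluster, while taking a $B_\ell$ whose index is neither that of $a$ nor that of $b$ would push the union to $2k+1$. If it happens that $a$ and $b$ lie in the same block $B_j$, the same choice $F_3=B_j$ still works and the union bound only improves to $2k-1$. Beyond identifying the right auxiliary set, the verification is entirely routine and follows the same template as the earlier claims and lemma in this subsection, so I anticipate no genuine obstacle.
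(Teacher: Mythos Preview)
Your proof is correct and follows the same approach as the paper: both argue by contradiction, write the two short edges as $e\cup\{a\}$ and $e\cup\{b\}$ with $a\in B_j$ for some $j\ne i$, and exhibit the $3$-cluster $\{x\}\cup C\cup S_1$, $\{x\}\cup C\cup S_2$, $B_j$. You are simply more explicit than the paper in verifying the existence part and the union/intersection computations, and in explaining why $B_j$ (rather than $B_i$ or some unrelated $B_\ell$) is the right auxiliary set.
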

\begin{proof}
Suppose that there exist two vertices $v_1\in B_j$ and $v_2\in B_k$ for some $j,k$ such that $S_1=\{v_1\}\cup e$ and $S_2=\{v_2\}\cup e$
are both contained  in $\mathcal{S}_{c}$.
Here $j\neq i$ and $k\neq i$ but $j,k$ might be the same.
However, the three sets $B_j,\{x\}\cup C\cup S_1 \text{ and }\{x\}\cup C\cup S_2$ form a $3$-cluster in $\mathcal{F}$,
a contradiction. Therefore, there is exactly one set $S\in \mathcal{S}_{c}$ such that $S\cap B_i=e$.
\end{proof}

Claim \ref{claim-3-v(G_i)<2} implies that the size of $E(G_i)$ is at most $k-1$ for every $i \in [\nu]$.
Combining Claim \ref{claim-3-v(G_i)<2} with Claim \ref{claim-3-e=S-cap-Bi}, we obtain that $|\mathcal{S}_c|=\sum_{i=1}^{\nu}|E(G_i)|\le (k-1)v$.
Next, we will give an upper bound for $|\mathcal{L}_c|$.

\begin{claim}\label{claim-3-Delta(Gij)<2}
For every pair $\{i,j\} \subset [\nu]$ every vertex in $G_{i,j}$ has degree at most $1$.
\end{claim}
\begin{proof}
Suppose that there exist two edges $e_1,e_2\in E(G_{i,j})$ for some pair $\{i,j\}\subset [\nu]$ such that $e_1\cap e_2\neq \emptyset$.
Without loss of generality, we may assume that the common endpoint of $e_1,e_2$ lies in $B_i$.
By the definition of $G_{i,j}$, there exist two sets $S_1,S_2\in \mathcal{L}_c$ such that
$e_1\subset S_1$ and $e_2\subset S_2$. However, the three sets $B_j,\{x\}\cup C\cup S_1 \text{ and }\{x\}\cup C\cup S_2$ form a $3$-cluster in $\mathcal{F}$,
a contradiction. Therefore, every vertex in $G_{i,j}$ has degree at most $1$.
\end{proof}

\begin{claim}\label{claim-3-one-S-contain-e-Gij}
For every $e\in E(G_{i,j})$ there is exactly one set $S\in\mathcal{L}_c$ containing $e$.
\end{claim}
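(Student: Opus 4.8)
The plan is to argue by contradiction, following the same template as the proof of Claim 3.3, with the single block $B_i$ there replaced by the pair $B_i,B_j$ and $\mathcal{S}_c$ replaced by $\mathcal{L}_c$. Existence of at least one $S\in\mathcal{L}_c$ with $e\subseteq S$ is immediate from $e\in\partial(\mathcal{L}_c)$, so the whole content is uniqueness.

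First I would fix notation. A long edge is a $3$-set meeting three distinct blocks, hence has exactly one vertex in each of them, so every element of $\partial(\mathcal{L}_c)$ lying in $\binom{B_i\cup B_j}{2}$ has one endpoint in $B_i$ and one in $B_j$; thus $e=\{u,w\}$ with $u\in B_i$ and $w\in B_j$. Suppose for contradiction that there are two distinct sets $S_1,S_2\in\mathcal{L}_c$ with $e\subseteq S_1$ and $e\subseteq S_2$. Since $|C|=k-4$, the sets $S_1,S_2$ are $3$-sets, so $S_1=e\cup\{z_1\}$ and $S_2=e\cup\{z_2\}$ with $z_1\neq z_2$. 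As $S_1,S_2$ are long edges meeting three distinct blocks and already meet $B_i,B_j$, we have $z_1\in B_{\ell_1}$ and $z_2\in B_{\ell_2}$ with $\ell_1,\ell_2\notin\{i,j\}$ (possibly $\ell_1=\ell_2$); in particular $z_1,z_2\notin\{u,w\}$, so $S_1$ and $S_2$ are genuine distinct $3$-sets.

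Next I would exhibit the forbidden configuration, namely the three sets $B_{\ell_1}$, $\{x\}\cup C\cup S_1$, and $\{x\}\cup C\cup S_2$. These lie in $\mathcal{F}$: the last two because $\mathcal{F}(C)\subseteq\mathcal{F}'(C)$ forces $\{x\}\cup C\cup S_t\in\mathcal{F}(x)$, and $B_{\ell_1}\in\mathcal{F}(\bar{x})$; and they are pairwise distinct since $S_1\neq S_2$ and only $B_{\ell_1}$ omits $x$. Their intersection is contained in $B_{\ell_1}\cap(\{x\}\cup C\cup(S_1\cap S_2))=B_{\ell_1}\cap(\{x\}\cup C\cup\{u,w\})$, which is empty because $B_{\ell_1}$ is disjoint from $\{x\}$, from $C\subseteq U$, and from $B_i\cup B_j$. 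Their union is contained in $B_{\ell_1}\cup\{x\}\cup C\cup\{u,w,z_1,z_2\}$, and since $z_1\in B_{\ell_1}$ while $u,w\notin B_{\ell_1}$ this has size at most $k+1+(k-4)+2+1=2k$ (the final $+1$ occurring only when $\ell_2\neq\ell_1$). Hence these three sets form a $3$-cluster in $\mathcal{F}$, contradicting that $\mathcal{F}$ is $3$-cluster-free, and the claim follows.

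I do not expect a genuine obstacle. The only point requiring attention is to take the third member of the cluster to be $B_{\ell_1}$ (rather than an arbitrary unrelated block), so that $z_1$ is absorbed into it and the union stays within $2k$, together with the routine verification that the three chosen sets are pairwise distinct.
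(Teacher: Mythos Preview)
Your proof is correct and follows essentially the same approach as the paper's: both argue by contradiction, write $S_1=e\cup\{z_1\}$ and $S_2=e\cup\{z_2\}$ with $z_1,z_2$ lying in blocks outside $\{B_i,B_j\}$, and then observe that $B_{\ell_1}$, $\{x\}\cup C\cup S_1$, and $\{x\}\cup C\cup S_2$ form a $3$-cluster. Your write-up is more explicit about the union and intersection checks, but the argument is the same.
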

\begin{proof}
Suppose there exist two  vertices $v_1\in B_k$ and $v_2\in B_{\ell}$ such that $S_1=\{v_1\}\cup e$
and $S_2=\{v_2\}\cup e$ are both contained in $\mathcal{L}_c$. Here $k,\ell \not\in \{i,j\}$ but $k,\ell$ might be the same.
However, $B_k,\{x\}\cup C\cup S_1 \text{ and }\{x\}\cup C\cup S_2$ form a $3$-cluster in $\mathcal{F}$,
a contradiction. Therefore, there is exactly one set in $\mathcal{L}_c$ that contains $e$.
\end{proof}

Claim \ref{claim-3-Delta(Gij)<2} implies that $|E(G_{i,j})|\le k$ for every pair $\{i,j\} \subset [\nu]$.
Combining Claim \ref{claim-3-Delta(Gij)<2} with Claim \ref{claim-3-one-S-contain-e-Gij},
we obtain that $|\mathcal{L}_c|=\frac{1}{3}\sum_{1\le i<j\le\nu}|E(G_{i,j})|\le \frac{k}{3}\binom{\nu}{2}$.
Since $|\mathcal{F}(C)|=|\mathcal{S}_c|+|\mathcal{L}_c|$, we therefore obtain the following lemma.

\begin{lemma}\label{lemma-3-F(C)-k>=4}
Suppose that $k\ge 4$
and $C\in \binom{U}{k-4}$.
Then
$
|\mathcal{F}(C)|\le \frac{k}{3}\binom{\nu}{2}+(k-1)v
$.
\end{lemma}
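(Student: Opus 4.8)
The plan is to glue together the structural facts established in the claims above, since those already carry all the weight. First I would record the basic combinatorics of $\mathcal{F}(C)$: for $C\in\binom{U}{k-4}$ and $F\in\mathcal{F}(x)$ with $F\cap U=C$ one has $|F\cap I|=3$, so after deleting the triples lying inside a single $B_i$ (this deletion is exactly the passage from $\mathcal{F}'(C)$ to $\mathcal{F}(C)$) the family $\mathcal{F}(C)$ consists of $3$-subsets of $I=B_1\cup\dots\cup B_\nu$, each meeting either exactly two of the parts (a short edge, with a $2+1$ split) or exactly three of the parts (a long edge, with a $1+1+1$ split). Hence $|\mathcal{F}(C)|=|\mathcal{S}_c|+|\mathcal{L}_c|$, and it remains to prove $|\mathcal{S}_c|\le(k-1)\nu$ and $|\mathcal{L}_c|\le\frac{k}{3}\binom{\nu}{2}$.

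For the short edges, each $S\in\mathcal{S}_c$ has a unique $2$-element trace $S\cap B_i\in\binom{B_i}{2}$, which is an edge of $G_i$; the claim that each $e\in E(G_i)$ equals $S\cap B_i$ for exactly one $S\in\mathcal{S}_c$ makes this trace correspondence a bijection, so $|\mathcal{S}_c|=\sum_{i=1}^{\nu}|E(G_i)|$. Since $G_i$ has matching number at most one it is a star or a triangle, and because $|B_i|=k\ge 4$ this forces $|E(G_i)|\le\max\{k-1,3\}=k-1$; summing over $i$ gives $|\mathcal{S}_c|\le(k-1)\nu$.

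For the long edges, each $E\in\mathcal{L}_c$ has exactly one vertex in each of three parts $B_a,B_b,B_c$, so its three shadow pairs are edges of $G_{a,b}$, $G_{a,c}$ and $G_{b,c}$ respectively. The claim that each edge of each $G_{i,j}$ lies in exactly one long edge shows that $E$ is counted precisely three times in this way, so $3|\mathcal{L}_c|=\sum_{1\le i<j\le\nu}|E(G_{i,j})|$. Every vertex of $G_{i,j}$ has degree at most one, i.e.\ $G_{i,j}$ is a matching on the $2k$ vertices of $B_i\cup B_j$, whence $|E(G_{i,j})|\le k$; there are $\binom{\nu}{2}$ pairs $\{i,j\}$, so $|\mathcal{L}_c|\le\frac{k}{3}\binom{\nu}{2}$. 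Adding the two estimates yields the lemma.

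I do not expect a genuine obstacle, since all the real content lies in the claims already proved. The only points demanding care are the counting multiplicities: a long edge contributes an edge to exactly three of the bipartite graphs $G_{i,j}$ (hence the factor $\frac{1}{3}$), whereas a short edge contributes an edge to exactly one $G_i$. It is also worth being explicit that it is the ``exactly one set'' statements, rather than the matching or degree bounds alone, that make the relevant correspondences bijective rather than merely surjective, and that the small case distinction $k-1$ versus $3$ for a graph of matching number at most one is harmless under the standing hypothesis $k\ge 4$.
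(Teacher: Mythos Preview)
Your proposal is correct and follows essentially the same approach as the paper: decompose $\mathcal{F}(C)$ into short and long edges, use Claims~3.2--3.3 to get $|\mathcal{S}_c|=\sum_i|E(G_i)|\le(k-1)\nu$, and use Claims~3.4--3.5 to get $|\mathcal{L}_c|=\tfrac{1}{3}\sum_{i<j}|E(G_{i,j})|\le\tfrac{k}{3}\binom{\nu}{2}$. Your write-up is in fact slightly more explicit than the paper's (you spell out the star/triangle dichotomy for matching number one and why the ``exactly one'' claims upgrade surjections to bijections), but the argument is the same.
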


Now we are ready to prove the upper bound for Theorem \ref{thm-3-cluster}.
\begin{proof}
\noindent\textbf{Case 1:} the family $\mathcal{F}(\bar{x}) \text{ is completely contained in } \binom{I}{k}$.\\
For every $j\in[k-2]$ define $\mathcal{B}_j=\bigcup_{i=1}^{\nu}\binom{B_i}{j}$, and let
\[
\mathcal{G}_j=\left\{A\in\binom{U}{j}: \exists B\in \mathcal{B}_{k-1-j} \text{ such that } \{x\}\cup A\cup B\in\mathcal{F}\right\}.
\]
Let $S\in \binom{U}{k-1}$, we say that $S$ is \textit{bad} if it contains an edge $E\in \mathcal{G}_j$ for some $j\in [k-2]$.
Note that if $S$ is bad, then $\{x\}\cup S\not\in\mathcal{F}$,
since otherwise there would be a set $B$ contained in $B_i$ for some  $i$ such that $F=\{x\}\cup E\cup B$
is contained in  $\mathcal{F}$.
However, the three sets $B_i,F \text{ and }\{x\}\cup S$ form a $3$-cluster in $\mathcal{F}$, a contradiction.

For every  $j\in [k-2]$ let $g_j$ denote the size of $\mathcal{G}_j$.
Let $\beta$ denote the number of bad sets in $\binom{U}{k-1}$.
Let $E\in \mathcal{G}_j$.
Then for every $A\in \binom{U-E}{k-1-j}$, we know that $A\cup E$ is a bad set in $\binom{U}{k-1}$.
Therefore, we have
$
\beta\ge \frac{1}{2^{2k}}\sum_{i=1}^{k-2}g_i\binom{|U|-i}{k-1-i}
$.

For every $j\in [k-1]$, we have $|\mathcal{F}_j|\le \binom{|I|}{j}\binom{|U|}{k-1-j}$.
Therefore, we obtain $\sum_{j=4}^{k-1}|\mathcal{F}_j|=o(1)\binom{|U|}{k-4}$.
Let $c'=\frac{k}{3}\binom{\nu}{2}+(k-1)\nu$, by Lemmas \ref{lemma-3-F(C)-up-bound} and \ref{lemma-3-F(C)-k>=4}, we have
\[
\begin{split}
|\mathcal{F}| & =\sum_{i=0}^{k-1}|\mathcal{F}_i|+|\mathcal{F}(\bar{x})|\\ &\le\binom{|U|}{k-1}-\beta+2^k\nu\sum_{i=1}^{k-2}g_i+\lf\frac{\nu}{2}\rf\binom{|U|}{k-3}+(c'+o(1))\binom{|U|}{k-4}+M_3.
\end{split}
\]
For every $j\in[k-2]$, we have $\frac{1}{2^{2k}}\binom{|U|-i}{k-1-j}>2^{k}\nu$.
Therefore, we have $-\beta+2^k\nu\sum_{i=1}^{k-2}g_i\le 0$ and, hence, we obtain
\[
\begin{split}
|\mathcal{F}|&\le \binom{|U|}{k-1}+\lf\frac{\nu}{2}\rf\binom{|U|}{k-3}+(c'+o(1))\binom{|U|}{k-4}+M_3\\
&=\binom{n-k\nu-1}{k-1}+\lf\frac{\nu}{2}\rf\binom{n-k\nu-1}{k-3}+(c'+o(1))\binom{n-k\nu-1}{k-4}+M_3.
\end{split}
\]

\medskip

\noindent\textbf{Case 2:} the family $\mathcal{F}(\bar{x}) \text{ is not completely contained in } \binom{I}{k}$.\\
Then there exists a set $B_{\nu+1}\in\mathcal{F}(\bar{x})$ such that $B_{\nu+1}-I\neq \emptyset$.
Now let $I'=I\cup B_{\nu+1}$ and $U'=[n]-x-I'$.
Let
\[
\mathcal{G}=\left\{E\in\binom{U'}{k-2}: \exists b\in I' \text{ such that } \{x,b\}\cup E\in\mathcal{F}\right\}.
\]
Let $S\in \binom{U'}{k-1}$, we say that $S$ is \textit{bad} if it contains an edge $E\in\mathcal{G}$.
Note that if $S$ is bad, then $\{x\}\cup S\not\in\mathcal{F}$,
since otherwise there would be a vertex $b$ contained in  $B_i$ for some $i$
such that $\{x,b\}\cup E\in\mathcal{F}$.
However, the three sets $B_i, \{x,b\}\cup E\text{ and } \{x\}\cup S$ form a $3$-cluster in $\mathcal{F}$, a contradiction.

Let $g$ denote the size of $\mathcal{G}$ and let $\beta$ denote the number of bad sets in $\binom{U'}{k-1}$.
Let $E\in \mathcal{G}$.
Then for every $v\in U'-E$, we know that $\{v\}\cup E$ is a bad set in $\binom{U'}{k-1}$.
So we have
$
\beta\ge \frac{|U'|-k+2}{k-1}g
$.

Let
$
\mathcal{F}_{\ge 2}=\left\{F\in\mathcal{F}(x): |F\cap I'|\ge 2\right\}
$, and note that
$
|\mathcal{F}_{\ge 2}|\le \sum_{i=2}^{k-1}\binom{|I'|}{i}\binom{|U'|}{k-1-i}< \frac{1}{2}\binom{|U'|}{k-2}
$.
Therefore, we have
\[
\begin{split}
|\mathcal{F}|&=|\mathcal{F}(x)|+|\mathcal{F}(\bar{x})|\le \binom{|U'|}{k-1}-\beta+g|I'|+\frac{1}{2}\binom{|U'|}{k-2}+m\\
&\le \binom{|U'|}{k-1}-\left(\frac{|U'|-k+2}{k-1}-|I'|\right)g+\frac{1}{2}\binom{|U'|}{k-2}+m.
\end{split}
\]
Here we would like to remind the reader that $m$ is the size of $|\mathcal{F}(\bar{x})|$,
which was defined above Lemma \ref{lemma-mubayi-bipartite-d-cluster} in Section 2.

Since $\frac{|U'|-k+2}{k-1}>|I'|$ and $|U'|\le n-k\nu-2$, we therefore have that
\[
\begin{split}
&|\mathcal{F}|\le  \binom{n-k\nu-2}{k-1}+\frac{1}{2}\binom{n-k\nu-2}{k-2}+m= \binom{n-k\nu-1}{k-1}-\frac{1}{2}\binom{n-k\nu-2}{k-2}+m.
\end{split}
\]
By the assumption that $|\mathcal{F}| = f(n,k,3,\nu)$, we obtain $m\ge \frac{1}{2}\binom{n-k\nu-2}{k-2}\ge \frac{1}{4}\binom{n-1}{k-2}$.
However, Lemma \ref{lemma-Mubayi-m-large-d-cluster} implies that $\mathcal{F}$ contains a $3$-cluster, a contradiction.
Therefore, Case 2 is impossible and, hence, we obtain
\[
f(n,k,3,\nu)\le \binom{n-k\nu-1}{k-1}+\lf\frac{\nu}{2}\rf\binom{n-k\nu-1}{k-3}+(c'+o(1))\binom{n-k\nu-1}{k-4}+M_3.
\]
\end{proof}

\subsection{Proof of Theorem \ref{thm-3-cluster-exact}}
\begin{proof}
Let $C$ be a subset of $U$ that of size at most $k-2$.
Since $\nu=1$, every set in $\mathcal{F}'(C)$ is contained in $B_1$ and, hence, we have $\mathcal{F}(C)=\emptyset$.
Note that in the argument above, we already showed that Case 2 is impossible.
Therefore, it suffices to only consider Case 1 and, hence, we obtain
\[
f(n,k,3,1)=|\mathcal{F}|\le \binom{|U|}{k-1}-\beta+2^{k}\sum_{i=1}^{k-2}g_i+1\le \binom{n-k-1}{k-1}+1,
\]
and equality holds only if $g_i=0$ holds for every $i\in[k-2]$,
i.e., $\mathcal{F}$ is the disjoint union of a $k$-set and a star.
\end{proof}

\section{Proof of Theorem \ref{thm-4-cluster}}
\subsection{Lower Bound}
\noindent$\bullet$ \textbf{A construction for $\nu = 1$}.\\
Let $k'=k-2$ and $n'=n-k\nu-1$ for short.
Let $\mathcal{G}$ be a $P_{2}^{k'}$-free $k'$-graph on $W$ with exactly $ex(n',P_{2}^{k'})$ edges.
Let $v\in J$ be fixed and define
\[
\mathcal{L}_3=\mathcal{S}_{\nu} \cup \left\{\{y,v\}\cup A: A\in\mathcal{G}\right\}.
\]
It is easy to see that
\[
|\mathcal{L}_3|=\binom{n-k-1}{k-1}+ex(n',P_{2}^{k'})+1.
\]
Since $\mathcal{L}_3$ is $4$-cluster-free and $\nu(\mathcal{L}_3)=2$, we therefore have
\[
f(n,k,4,1)\ge \binom{n-k-1}{k-1}+ex(n',P_{2}^{k'})+1.
\]


\noindent$\bullet$ \textbf{A construction for $\nu \ge 2$}.\\
Let $\mathcal{C}_{\ell}=\{C_1,\ldots,C_{\lf\nu/2\rf}\}$
and $\mathcal{C}_{r}=\{C_{\lf\nu/2\rf+1},\ldots,C_{\nu}\}$.
For every pair $(C_i,C_j)$ with $C_i\in \mathcal{C}_{\ell}$ and $C_j\in \mathcal{C}_{r}$
add  $k$ vertex disjoint edges between $C_i$ and $C_j$,
and let $G$ denote the resulting graph.
Note that the number of edges in $G$ is $k\lf\nu^2/4\rf$.
Let
\[
\mathcal{L}_4=\mathcal{S}_{\nu}\cup \left\{\{y\}\cup e\cup B: B\in\binom{W}{k-3} \text{ and } e\in E(G)\right\}.
\]
Then, it is easy to see that
\[
|\mathcal{L}_4|=\binom{n-k\nu-1}{k-1}+k\lf\frac{\nu^2}{4}\rf\binom{n-k\nu-1}{k-3}+\nu.
\]
Since $\mathcal{L}_4$ is $4$-cluster-free and $\nu(\mathcal{L}_4)=\nu+1$, we therefore have that
\[
f(n,k,4,\nu)\ge \binom{n-k\nu-1}{k-1}+k\lf\frac{\nu^2}{4}\rf\binom{n-k\nu-1}{k-3}+\nu.
\]

\subsection{Upper Bound}
Let $M_4$ be the maximum possible number of sets in  $\mathcal{F}$ that are completely contained in $I$,
and it is easy to see that $M_4 \le f(k\nu,k,4,\nu-1)$.

\begin{proof}
\noindent\textbf{Case 1:} the family $\mathcal{F}(\bar{x}) \text{ is completely contained in } \binom{I}{k}$.\\
For every $i\in [\nu]$ let
\[
\mathcal{G}_i=\left\{A\in\binom{U}{k-2}:\exists b\in B_i\ \text{such that}\ \{x,b\}\cup A\in\mathcal{F} \right\},
\]
and let $g_i$ denote the size of $\mathcal{G}_i$.
Without loss of generality, we may assume that $g_1\ge \cdots \ge g_{\nu}$.
Let $\mathcal{P}_2$ be the collection of all tight $2$-paths in $\mathcal{G}_1$.
Then we have
\[
|\mathcal{P}_2|=\sum_{E\in \binom{U}{k-3}}\binom{d_{\mathcal{G}_1}(E)}{2}\ge \binom{|U|}{k-3}\binom{\sum d_{\mathcal{G}_1}(E)/\binom{|U|}{k-3}}{2}=\frac{(k-2)g_1}{2}\left(\frac{(k-2)g_1}{\binom{|U|}{k-3}}-1\right).
\]
Let $S\in \binom{U}{k-1}$, we say that $S$ is \textit{bad} if it contains at least two sets $E_1,E_2\in\mathcal{G}_i$ for some $i$.
Note that if $S$ is bad, then $\{x\}\cup S\not\in\mathcal{F}$,
since otherwise there would be two vertices $b_1,b_2\in B_i$ such that $\{x,b_1\}\cup E_1,\{x,b_2\}\cup E_2$ are both contained in $\mathcal{F}$.
However, the four sets $B_i, \{x\}\cup S, \{x,b_1\}\cup E_1 \text{ and } \{x,b_2\}\cup E_2$ form a $4$-cluster in $\mathcal{F}$, a contradiction.

Let $\beta$ denote the number of bad sets.
Since every tight $2$-path in $\mathcal{G}_1$ forms a bad set, we have
$
\beta\ge\frac{g_1}{k-1}\left((k-2)g_1/\binom{|U|}{k-3}-1\right)
$.

Let
$
\mathcal{F}_{\ge 2}=\left\{F\in \mathcal{F}(x): |F\cap I|\ge 2\right\}
$.
Then there exists a  constant $c$ such that
$
|\mathcal{F}_{\ge 2}|\le \sum_{i=2}^{k-1}\binom{|I|}{i}\binom{|U|}{k-1-i}\le c\binom{|U|}{k-3}
$.

For every  $E\in \mathcal{G}_i$ there are at most two vertices $b_1,b_2 $ in $ B_i$
such that $\{x,b_1\}\cup E, \{x,b_2\}\cup E\in \mathcal{F}$.
Indeed, suppose there are three vertices $b_1,b_2,b_3\in B_i$ such that
$\{x,b_1\}\cup E, \{x,b_2\}\cup E, \{x,b_3\}\cup E$ are all contained in $\mathcal{F}$.
Then the four sets $\{x,b_1\}\cup E, \{x,b_2\}\cup E, \{x,b_3\}\cup E$ and $B_i$ would form a $4$-cluster in $\mathcal{F}$,
a contradiction.
Therefore, we have
\[
\begin{split}
|\mathcal{F}|&=|\mathcal{F}(x)|+|\mathcal{F}(\bar{x})|\le \binom{|U|}{k-1}-\beta+\sum_{i=1}^{\nu}2g_i+ c\binom{|U|}{k-3}+M_4\\
& \le \binom{n-k\nu-1}{k-1}+2\nu g_1-\frac{g_1}{k-1}\left(\frac{(k-2)g_1}{\binom{|U|}{k-3}}-1\right)+ c\binom{n-k\nu-1}{k-3}+M_4.
\end{split}
\]
Viewing $g_1$ as a variable to obtain that
$
2\nu g_1-\frac{g_1}{k-1}\left((k-2)g_1/\binom{|U|}{k-3}-1\right)\le \frac{(2\nu(k-1)+1)^2}{4(k-1)(k-2)}\binom{|U|}{k-3}
$.
Since $\frac{(2\nu(k-1)+1)^2}{4(k-1)(k-2)}$ is a constant only related to $k$ and $\nu$, we obtain that
\[
|\mathcal{F}|\le \binom{n-k\nu-1}{k-1}+c'_2\binom{n-k\nu-1}{k-3}+M_4,
\]
where $c'_2$ is a constant only related to $k$ and $\nu$.

\medskip

\noindent\textbf{Case 2:} the family $\mathcal{F}(\bar{x}) \text{ is not completely contained in } \binom{I}{k}$.\\
Then there exists a set $B_{\nu+1}\in\mathcal{F}(\bar{x})$ such that $B_{\nu+1}-I\neq \emptyset$.
Now let $I'=I\cup B_{\nu+1}$ and $U'=[n]-x-I'$.
For every $i\in[\nu+1]$ let
\[
\mathcal{G}_i=\left\{A\in\binom{U'}{k-2}:\exists b\in B_i\ \text{such that}\ \{x,b\}\cup A\in\mathcal{F} \right\},
\]
and let $g_i$ denote the size of $\mathcal{G}_i$.
We may assume that $g_1\ge \cdots \ge g_{\nu+1}$.
Let $\mathcal{P}_2$ be the collection of all tight $2$-paths in $\mathcal{G}_1$.
Then
$
|\mathcal{P}_2|\ge\frac{(k-2)g_1}{2}\left((k-2)g_1/\binom{|U'|}{k-3}-1\right)
$.

Let $S\in \binom{U'}{k-1}$, we say that $S$ is \textit{bad} if $S$ contains two edges $E_1,E_2$ in $\mathcal{G}_i$ for some $i$.
Note that if $S$ is bad, then $\{x\}\cup S\not\in\mathcal{F}$.
Let $\beta$ denote the number of bad sets in $\binom{U'}{k-1}$.
Since every tight $2$-path in $\mathcal{G}_1$ forms a bad set,  we have
$
\beta\ge\frac{g_1}{k-1}\left((k-2)g_1/\binom{|U'|}{k-3}-1\right)
$.

Let
$
\mathcal{F}_{\ge 2}=\left\{F\in \mathcal{F}(x): |F\cap I'|\ge 2\right\}
$ and note that there exists a constant $c$ such that
$
|\mathcal{F}_{\ge 2}|\le \sum_{i=2}^{k-1}\binom{|I'|}{i}\binom{|U'|}{k-1-i}\le c\binom{|U'|}{k-3}
$.
Therefore, we have
\[
\begin{split}
|\mathcal{F}| & \le \binom{|U'|}{k-1}-\beta+\sum_{i=1}^{\nu+1}2g_i+ c\binom{|U'|}{k-3}+m\\
& \le \binom{|U'|}{k-1}+2(\nu+1)g_1-\frac{g_1}{k-1}\left(\frac{(k-2)g_1}{\binom{|U'|}{k-3}}-1\right)+ c\binom{|U'|}{k-3}+m.
\end{split}
\]
Since
$
2(\nu+1)g_1-\frac{g_1}{k-1}\left((k-2)g_1/\binom{|U'|}{k-3}-1\right)\le \frac{(2(\nu+1)(k-1)+1)^2}{4(k-1)(k-2)}\binom{|U'|}{k-3}
$,
there exists a constant $c'$ such that
\[
\begin{split}
&|\mathcal{F}|\le \binom{|U'|}{k-1}+c'\binom{|U'|}{k-3}+m\le \binom{n-k\nu-1}{k-1}-\binom{n-k\nu-2}{k-2}+c'\binom{n-k\nu-2}{k-3}+m.
\end{split}
\]
By the assumption that $|\mathcal{F}| = f(n,k,4,\nu)$,
we have $m>\binom{n-k\nu-2}{k-2}-c'\binom{n-k\nu-2}{k-3}\ge \frac{1}{2}\binom{n-1}{k-2}$.
However, Lemma \ref{lemma-Mubayi-m-large-d-cluster} implies that  $\mathcal{F}$ contains a $4$-cluster, a contradiction.
Therefore, Case $2$ is impossible and, hence, there exists a constant $c_2$ such that
\[
f(n,k,4,\nu)\le \binom{n-k\nu-1}{k-1}+c_2\binom{n-k\nu-1}{k-3}.
\]
\end{proof}

\section{Proof of Theorem \ref{thm-d-cluster}}
Let $k'=k-2$ and $n'=n-k\nu-1$ for short.
\subsection{Lower Bound}
Let $\mathcal{G}$ be an $n'$-vertex $\mathcal{H}_{k-1}^{d-2}$-free $k'$-multigraph on $W$ with
exactly $EX^{k'}\left(n',\mathcal{H}_{k-1}^{d-2}\right)$ edges.
Let $E\in \mathcal{G}$ be an edge of multiplicity $\ell$.
For every $i\in[\nu]$ choose $\ell$ distinct vertices $c_1^i,\ldots,c_{\ell}^i$
from $C_i$ and add $\{y,c_1^i\}\cup E,\ldots,\{y,c_l^i\}\cup E$ into $\mathcal{S}_{\nu}$.
Let $\mathcal{L}_5$ denote the resulting family.
It is easy to see that
\[
|\mathcal{L}_5|=\binom{n-k\nu-1}{k-1}+\nu EX^{k'}\left(n',\mathcal{H}_{k-1}^{d-2}\right)+\nu.
\]
When $d\ge 5$, every $k'$-graph in $H_{k-1}^{d-2}$ is \textit{nondegenerate}, i.e. the Tur\'{a}n density $\pi\left( H_{k-1}^{d-2}\right)$ of $H_{k-1}^{d-2}$ is not $0$ (the reader may refer to \cite{keevash2011hypergraph} for more details),
we therefore have that
$
\Pi\left(\mathcal{H}_{k-1}^{d-2}\right)\ge \pi\left(H_{k-1}^{d-2}\right)\ge \frac{(k-2)!}{(k-2)^{k-2}}
$.
Since $\mathcal{L}_5$ is a $d$-cluster-free family with $\nu(\mathcal{L}_5)=\nu+1$, we therefore have that
\[
f(n,k,d,\nu)\ge \binom{n-k\nu-1}{k-1}+\nu EX^{k'}\left(n',\mathcal{H}_{k-1}^{d-2}\right)+\nu.
\]

\subsection{Upper Bound}
Let $M_d$ be the maximum possible number of sets in $\mathcal{F}$ that are completely contained in $I$,
and it is easy to see that $M_d \le f(k\nu,k,d,\nu-1)$.

\begin{proof}
\noindent\textbf{Case 1:} the family $\mathcal{F}(\bar{x}) \text{ is completely contained in } \binom{I}{k}$.\\
For every $i\in[\nu]$ define the $k'$-multigraph $\mathcal{G}_{i}$ on $U$ as
\[
\mathcal{G}_i=\left\{E\in\binom{U}{k-2}:\exists b\in B_i\ \text{such that}\ \{x,b\}\cup E\in\mathcal{F} \right\}.
\]
Let $E\in \mathcal{G}_i$.
Then the multiplicity of $E$ is the number of vertices $b$ in $B_i$ such that $\{x,b\}\cup E\in\mathcal{F}$.
For every $i\in[\nu]$ let $g_i$ denote the number of edges in $\mathcal{G}_i$.
Without loss of generality, we may assume that $g_1\ge \cdots \ge g_{\nu}$.

Let $S\in  \binom{U}{k-1}$, we say that $S$ is \textit{bad} if $\mathcal{G}_i[S]\in\mathcal{H}_{k-1}^{d-2}$ holds for some $i$.
Note that if $S$ is bad, then $\{x\}\cup S\not\in\mathcal{F}$,
since otherwise there would be $d-2$ edges $E_1,\ldots,E_{d-2}$ in $\mathcal{G}_i$ for some $i$ such that they are all contained in $S$.
By the definition of $\mathcal{G}_{i}$, there exist $d-2$ vertices  $b_1,\ldots,b_{d-2}\in B_i$ such that $\{x,b_1\}\cup E_1,\ldots,\{x,b_{d-2}\}\cup E_{d-2}$
are all contained in $\mathcal{F}$.
However, the $d$ sets $B_i, \{x\}\cup S,  \{x,b_1\}\cup E_1,\ldots,\{x,b_{d-2}\}\cup E_{d-2}$ form a $d$-cluster in $\mathcal{F}$, a contradiction.

Let
$
\mathcal{F}_{\ge 2}=\left\{F\in \mathcal{F}(x): |F\cap I|\ge 2\right\}
$.
Then there exists a constant $c$ such that
$
|\mathcal{F}_{\ge 2}|\le \sum_{i=2}^{k-1}\binom{|I|}{i}\binom{|U|}{k-1-i}\le c\binom{|U|}{k-3}
$.
Therefore, we have
\[
\begin{split}
&|\mathcal{F}|\le \binom{|U|}{k-1}-\beta+\sum_{i=1}^{\nu}g_i+ c\binom{|U|}{k-3}+M_d.
\end{split}
\]
If $g_1\le (1+o(1))EX^{k'}\left(n',\mathcal{H}_{k-1}^{d-2}\right)$, then we are done.
Therefore, we may assume that $g_1=(1+a)EX^{k-2}\left(n',\mathcal{H}_{k-1}^{d-2}\right)$ with
 $a\ge 2\sigma$ holds for some absolute constant $\sigma>0$.
By Lemma \ref{lemma-supersaturation}, the graph $\mathcal{G}_1$ contains  at least $\frac{a/2}{\binom{N}{k-1}}\binom{|U|}{k-1}$
copies of elements in $\mathcal{H}_{k-1}^{d-2}$, where
$N$ is the smallest integer satisfying both $EX^{k'}\left(N,\mathcal{H}_{k-1}^{d-2}\right)\le (1+\sigma)\Pi\left(\mathcal{H}_{k-1}^{d-2}\right)\binom{N}{k-2}$ and $N\ge k-1$.

Let $\beta$ denote the number of bad sets.
Since every copy of element in $\mathcal{H}_{k-1}^{d-2}$ forms a bad set in $\binom{U}{k-1}$,
we therefore have
\[
\beta\ge \frac{1}{(k-1)^{d-2}}\frac{a}{2\binom{N}{k-1}}\binom{|U|}{k-1}=:ac'\binom{|U|}{k-1},
\]
where $c'=\frac{1}{2(k-1)^{d-2}\binom{N}{k-1}}>0$ is a constant.
Therefore, the size of $\mathcal{F}$ satisfies
\[
\begin{split}
&|\mathcal{F}|\le \binom{|U|}{k-1}-ac'\binom{|U|}{k-1}+\nu (1+a)EX^{k'}\left(n',\mathcal{H}_{k-1}^{d-2}\right)+ c\binom{|U|}{k-3}+M_d.
\end{split}
\]
Since $EX^{k'}\left(n',\mathcal{H}_{k-1}^{d-2}\right)\le \frac{d-2}{k-1}\binom{|U|}{k-2}$,
we obtain that $c'\binom{|U|}{k-1}>\nu EX^{k'}\left(n',\mathcal{H}_{k-1}^{d-2}\right)$ and, hence, we have
\[
\begin{split}
&|\mathcal{F}|\le \binom{|U|}{k-1}+\nu (1+o(1))EX^{k'}\left(n',\mathcal{H}_{k-1}^{d-2}\right).
\end{split}
\]

\medskip

\noindent\textbf{Case 2:} the family $\mathcal{F}(\bar{x}) \text{ is not completely contained in } \binom{I}{k}$.\\
Then there exists a set $B_{\nu+1}\in\mathcal{F}(\bar{x})$ such that $B_{\nu+1}-I\neq\emptyset$.
Now let $I'=I\cup B_{\nu+1}$ and let $U'=[n]-x-I$.
For every $i\in[\nu+1]$ define the $k'$-multigraph $\mathcal{G}_i$ on $U'$ as
\[
\mathcal{G}_i=\left\{E\in\binom{U'}{k-2}:\exists b\in B_i\ \text{such that}\ \{x,b\}\cup E\in\mathcal{F} \right\}.
\]
Let $E\in \mathcal{G}_i$.
Then the multiplicity of $E$ is the number of vertices $b$ in $B_i$ such that $\{x,b\}\cup E\in\mathcal{F}$.
For every $i\in[\nu+1]$ let $g_i$ denote the number of edges in $\mathcal{G}_i$.
Without loss of generality, we may assume that $g_1\ge \cdots \ge g_{\nu+1}$.
Let $S\in \binom{U'}{k-1}$, we say that $S$ is \textit{bad} if $\mathcal{G}_i[S]\in\mathcal{H}_{k-1}^{d-2}$ holds for some $i$.
Note that if $S$ is bad, then $\{x\}\cup S\not\in\mathcal{F}$.
Let $\beta$ denote the number of bad sets.

Let
$
\mathcal{F}_{\ge 2}=\left\{F\in \mathcal{F}(x): |F\cap I'|\ge 2\right\}
$.
Then there exists a constant $c$ such that
\[
|\mathcal{F}_{\ge 2}|\le \sum_{i=2}^{k-1}\binom{|I'|}{i}\binom{|U'|}{k-1-i}\le c\binom{|U'|}{k-3}.
\]
Therefore, we have
\[
\begin{split}
|\mathcal{F}|&\le \binom{|U'|}{k-1}-\beta+\sum_{i=1}^{\nu+1}g_i+ c\binom{|U'|}{k-3}+m\\
&\le \binom{n-k\nu-2}{k-1}-\beta+\sum_{i=1}^{\nu+1}g_i+ c\binom{n-k\nu-2}{k-3}+m\\
&=\binom{n-k\nu-1}{k-1}-\binom{n-k\nu-2}{k-2}-\beta+\sum_{i=1}^{\nu+1}g_i+ c\binom{n-k\nu-2}{k-3}+m.
\end{split}
\]
If $g_1\le (1+o(1))EX^{k-2}\left(|U'|,\mathcal{H}_{k-1}^{d-2}\right)\le (1+o(1))\frac{d-2}{k-1}\binom{n-k\nu-2}{k-2}$,
then
\[
\begin{split}
&|\mathcal{F}|\le \binom{n-k\nu-1}{k-1}+\nu EX^{k-2}\left(n',\mathcal{H}_{k-1}^{d-2}\right)+m-\frac{k-d+1}{2(k-1)}\binom{n-1}{k-2}.
\end{split}
\]
By the assumption that $|\mathcal{F}| = f(n,k,d,\nu)$, we have
$
m>\frac{k-d+1}{4(k-1)}\binom{n-1}{k-2}
$.
However, Lemma \ref{lemma-Mubayi-m-large-d-cluster} implies that $\mathcal{F}$ contains a $d$-cluster, a contradiction.
Therefore, we may assume that $g_1=(1+a)EX^{k'}\left(|U'|,\mathcal{H}_{k-1}^{d-2}\right)$
with $a\ge 2\sigma$ holds for some absolute constant $\sigma>0$.
Lemma 2.2 implies that $\mathcal{G}_1$ contains at least $\frac{a/2}{\binom{N}{k-1}}\binom{|U'|}{k-1}$
copies of elements in $\mathcal{H}_{k-1}^{d-2}$.
Therefore, we have
\[
\beta\ge \frac{1}{(k-1)^{d-2}}\frac{a}{2\binom{N}{k-1}}\binom{|U'|}{k-1}=:ac'\binom{|U'|}{k-1},
\]
where $c'=\frac{1}{2(k-1)^{d-2}\binom{N}{k-1}}>0$ is a constant.
So the size of $\mathcal{F}$ satisfies
\[
\begin{split}
&|\mathcal{F}|\le \binom{|U'|}{k-1}-ac'\binom{|U'|}{k-1}+(\nu+1)(1+a)EX^{k'}(|U'|,\mathcal{H}_{k-1}^{d-2})+ c\binom{|U'|}{k-3}+m.
\end{split}
\]
Since $c'\binom{|U|}{k-1}>(\nu+1)EX^{k'}\left(n',\mathcal{H}_{k-1}^{d-2}\right)$,
we therefore have that
\[
\begin{split}
|\mathcal{F}| & \le \binom{|U'|}{k-1}+(\nu+1)(1+o(1))EX^{k'}\left(|U'|,\mathcal{H}_{k-1}^{d-2}\right)+m\\
&\le \binom{n-k\nu-1}{k-1}-\binom{n-k\nu-2}{k-2}+(\nu+1)(1+o(1))EX^{k'}\left(|U'|,\mathcal{H}_{k-1}^{d-2}\right)+m\\
&\le \binom{n-k\nu-1}{k-1}+\nu EX^{k'}\left(|U'|,\mathcal{H}_{k-1}^{d-2}\right)+m-\frac{k-d+1}{2(k-1)}\binom{n-1}{k-2}.
\end{split}
\]
By the assumption that $|\mathcal{F}| = f(n,k,d,\nu)$, we have
$
m>\frac{k-d+1}{4(k-1)}\binom{n-1}{k-2}
$.
However, Lemma \ref{lemma-Mubayi-m-large-d-cluster} implies that $\mathcal{F}$ contains a $d$-cluster, a contradiction.
Therefore, we have
\[
f(n,k,d,\nu)\le \binom{n-k\nu-1}{k-1}+\nu(1+o(1))EX^{k-2}\left(n-k\nu-1,\mathcal{H}_{k-1}^{d-2}\right).
\]
\end{proof}

\section{Proof of Theorem \ref{thm-d-cluste-d-wise}}
\begin{proof}
Let $\mathcal{K}\subset \binom{[n]}{k}$ be a family that is $d$-cluster-free but not $t$-wise intersecting and of size $g(n,k,d,t)$.
Notice that a family that is not intersecting is also not $t$-wise intersecting.
Therefore, we have  $g(n,k,d,t)\ge f(n,k,d,1)> \binom{n-k-1}{k-1}$.

Now choose $\delta'>0$ to be sufficiently small such that $\delta'< 2\binom{n-k-1}{k-1}/\binom{n-1}{k-1}-1$ holds for sufficiently large $n$,
and let $\epsilon', n_0'$ be given by Theorem \ref{thm-stability-Mubayi}.
Let $n$ be sufficiently large such that $n>n_0'$ and  $\binom{n-k-1}{k-1}>(1-\epsilon')\binom{n-1}{k-1}$.
By Theorem \ref{thm-stability-Mubayi}, there exists $z\in [n]$ such that $|\mathcal{K}(\bar{z})|<\delta' \binom{n-1}{k-1}$.

Notice that  $\mathcal{K}(\bar{z})$ is nonempty, since otherwise every set in $\mathcal{K}$ would contain $z$,
and this contradicts our assumption that $\mathcal{K}$ is not $t$-wise intersecting.
So, let $D$ be a set in $\mathcal{K}(\bar{z})$ and consider the family $\mathcal{K}(z)$.
We claim that there exists a set $E\in \mathcal{K}(z)$ that is disjoint from $D$.
Indeed, suppose that every set in $\mathcal{K}(z)$ has nonempty intersection with $D$.
Then the size of $\mathcal{K}(z)$ is at most $\binom{n-1}{k-1}-\binom{n-k-1}{k-1}$,
and, hence, we have
\[
|\mathcal{K}|\le \binom{n-1}{k-1}-\binom{n-k-1}{k-1}+\delta' \binom{n-1}{k-1}<\binom{n-k-1}{k-1},
\]
a contradiction.
Therefore, there exists a set $E\in \mathcal{K}(z)$ that is disjoint from $D$.
However, this implies that $\mathcal{K}$ is not intersecting and, hence, we have $g(n,k,d,t)\le f(n,k,d,1)$.
Therefore, the equation $g(n,k,d,t)= f(n,k,d,1)$ holds for sufficiently large $n$.
\end{proof}

\section{Concluding Remarks}
In Section 3 we give two constructions for the lower bounds for $f(n,k,3,\nu)$.
The first construction shows that
\[
f(n,k,3,\nu)\ge \binom{n-k\nu-1}{k-1}+\sum_{i=2}^{k-1}\lf\frac{\nu}{2}\rf\binom{2k-2}{i-2}\binom{n-k\nu-1}{k-1-i}+\nu,
\]
while the second construction shows that
\[
f(n,k,3,\nu)\ge \binom{n-k\nu-1}{k-1}+\lf\frac{\nu}{2}\rf\binom{n-k\nu-1}{k-3}+(k-1)ex(\nu,P_{2}^{3})\binom{n-k\nu-1}{k-4}+\nu.
\]
Since $ex(\nu,P_{2}^{3}) \ge \binom{\nu}{2}/3$ holds for infinitely many $\nu$,
the second construction is better than the first one for large $\nu$.
However, when $\nu$ is small, say smaller than $7$, then the first construction is better.
So determining the extremal families for $f(n,k,3,\nu)$ seems very complicated in general.

The author has written another paper \cite{XL19STRUCTURE} concerning the structures of conditionally intersecting families,
and gave a second proof for Theorem \ref{thm-3-cluster-exact}.
However, the method we used in the second proof is completely different from the method we used here.
In the second proof our main tool is a structural theorem for $3$-cluster-free families,
and, moreover, in the second proof we showed that $f(n,k,3,\nu) = \binom{n-k\nu-1}{k-1}+1$ holds for all $n \ge 3k\binom{2k}{k}$.

\section{Acknowledgement}
We are very grateful to Dhruv Mubayi for his guidance, expertise, fruitful discussions that greatly assisted this research,
and suggestions that greatly improved the presentation of this paper.
We are also very grateful to the referees for their very careful reading of the manuscript and many helpful suggestions.

\section{Appendix A}
\subsection{Proof of Lemma \ref{lemma-supersaturation}}
\begin{proof}
Fix $M\ge \max\{r,v\}$ such that $EX(M,\mathcal{H}_{v}^{e})\le (\Pi(\mathcal{H}_{v}^{e})+a/2)\binom{M}{r}$.
Then there must be at least $\left(a/2\right)\binom{n}{M}$ $M$-sets $S\subset V(\mathcal{G})$
inducing an $r$-graph $\mathcal{G}[S]$ with $e(\mathcal{G}[S])>(\Pi(\mathcal{H}_{v}^{e})+a/2)\binom{M}{r}$.
Otherwise, we would have
\[
\begin{split}
&\sum_{S\in \binom{V(\mathcal{G})}{M}}e(\mathcal{G}[S])\le \binom{n}{M}\left(\Pi(\mathcal{H}_{v}^{e})+\frac{a}{2}\right)\binom{M}{r}+\frac{a}{2}\binom{n}{M}\binom{M}{r}=(\Pi(\mathcal{H}_{v}^{e})+a)\binom{n}{M}\binom{M}{r}.
\end{split}
\]
However, we also have
\[
\begin{split}
&\sum_{S\in \binom{V(\mathcal{G})}{M}}e(\mathcal{G}[S])=\binom{n-r}{M-r}e(\mathcal{G})>\binom{n-r}{M-r}\left(\Pi(\mathcal{H}_{v}^{e})+a\right)\binom{n}{r}=(\Pi(\mathcal{H}_{v}^{e})+a)\binom{n}{M}\binom{M}{r},
\end{split}
\]
a contradiction.
By the choice of $M$, every $M$-set $S$ of $V(\mathcal{G})$ contains a copy of an element in $\mathcal{H}_{v}^{e}$.
So the number of copies of elements in $\mathcal{H}_{v}^{e}$ is at least
$\frac{a/2\binom{n}{M}}{\binom{n-v}{M-v}}=\frac{a/2}{\binom{M}{v}}\binom{n}{v}$.
So $b$ is at least $(a/2)/\binom{M}{v}$.
\end{proof}

\subsection{Proof of Lemma \ref{lemma-Mubayi-3-dijoint-sets}}
\begin{proof}
Let $t$ be the number of $(k-2)$-sets $T\subset [n]-x$ satisfying
\[
d_{\mathcal{F}(x)}(T)\ge n-k+1-\frac{\left(k^2/c+2k\right)m}{\binom{n-1}{k-2}}.
\]
Then
\[
\begin{split}
(k-1)|\mathcal{F}(x)|&=\sum_{T'\in\binom{[n]-x}{k-2}}d_{\mathcal{F}(x)}(T')\\
&\le t(n-k+1)+\left(\binom{n-1}{k-2}-t\right)\left(n-k+1-\frac{\left(k^2/c+2k\right)m}{\binom{n-1}{k-2}}\right),
\end{split}
\]
which implies that
\[
\begin{split}
\frac{\left(k^2/c+2k\right)m}{\binom{n-1}{k-2}}t & \ge (k-1)|\mathcal{F}(x)|-\binom{n-1}{k-2}\left(n-k+1-\frac{\left(k^2/c+2k\right)m}{\binom{n-1}{k-2}}\right)\\
&\ge (k-1)\left(\binom{n-k-1}{k-1}-m\right)-\binom{n-1}{k-2}\left(n-k+1-\frac{\left(k^2/c+2k\right)m}{\binom{n-1}{k-2}}\right)\\
&=(k-1)\left(\binom{n-k-1}{k-1}-\binom{n-1}{k-1}\right)+\left(k^2/c+2k-k+1\right)m\\
&\ge \left(k^2/c+k+1\right)m-(k-1)(k+1)\binom{n-k-1}{k-2}\ge km.
\end{split}
\]
Here we used the fact that $\binom{n-1}{k-1}-\binom{n-k-1}{k-1}\le (k+1)\binom{n-k-1}{k-2}$ holds for sufficiently large $n$, and $m\ge c\binom{n-1}{k-2}$.
From the inequality above, we obtain
\[
t\ge \frac{k}{k^2/c+2k}\binom{n-1}{k-2}=\frac{1}{k/c+2}\binom{n-1}{k-2}.
\]
Now let us consider the family of all $(k-2)$-sets described above, and let $T_1,\ldots,T_{\ell}$ be a maximum matching
in this family. Since any other set has non-empty intersection with $\bigcup_{i\in[\ell]}T_i$, we have
$t\le \ell(k-2)\binom{n-1}{k-3}$. So we obtain $\ell\ge \frac{1}{(k-2)\left(k/c+2\right)}\binom{n-1}{k-2}/\binom{n-1}{k-3}$.
When $n$ is sufficiently large, $\ell\ge 3$ and this completes the proof.
\end{proof}

\subsection{Proof of Lemma \ref{lemma-Mubayi-m-large-d-cluster}}
\begin{proof}
By Lemma \ref{lemma-Mubayi-3-dijoint-sets}, there exist three disjoint $(k-2)$-sets $S_1,S_2,S_3\subset [n]\setminus\{x\}$
such that for each $i$
\[
d_{\mathcal{F}(x)}(S_i)\ge n-k+1-\frac{\left(k^2/c+2k\right)m}{\binom{n-1}{k-2}}.
\]
Therefore, for each $i$
\[
|\{y\in[n]:\{x,y\}\cup S_i\not\in \mathcal{F}\}|<k+\frac{\left(k^2/c+2k\right)m}{\binom{n-1}{k-2}}.
\]
Let
$
B=\{y\in[n]:\{x,y\}\cup S_i\not\in \mathcal{F}\ \text{for\ some}\ i\in[3]\}
$.
Then, we have $|B|\le 3k+\frac{\left(3k^2/c+6k\right)m}{\binom{n-1}{k-2}}$.
By adding vertices into $B$, we may assume that
\[
|B|= 3k+\frac{\left(3k^2/c+6k\right)m}{\binom{n-1}{k-2}}.
\]
Since $m\ge c\binom{n-1}{k-2}$ for some constant $c>0$, we have
\[
\begin{split}
|B| & = 3k+\frac{\left(3k^2/c+6k\right)m}{\binom{n-1}{k-2}}\le\frac{\left(6k^2/c+6k\right)m}{\binom{n-1}{k-2}}\\
&\le \frac{\left(6k^2/c+6k\right)\delta\binom{n-1}{k-1}}{\binom{n-1}{k-2}}\le \left(6k^2/c+6k\right)\delta n\le\frac{n-1}{2}.
\end{split}
\]
For each $i\in\{0,1,\ldots,k\}$, define
\[
\mathcal{T}_i=\{T\in\mathcal{F}(\bar{x}):|T\cap B|=i\}.
\]
Note that $\bigcup_{i=0}^{k}\mathcal{T}_i$ is a partition of $\mathcal{F}(\bar{x})$.
First we show that $\mathcal{T}_0=\mathcal{T}_1=\mathcal{T}_2=\emptyset$.
Our first observation is that by definition $S_i\subset B$ for all $i\in[3]$.
If $S\in \mathcal{T}_0\cup \mathcal{T}_1\cup \mathcal{T}_2$, then there is
an $i$ for which $S_i\cup S=\emptyset$.
Choose $d-2\le k-2$ elements $y_1,\ldots,y_{d-2}\in S\setminus B$ and $y\in [n]-x-B-S$.
Now the $d-2$ sets $\{x,y_j\}\cup S_i$ for any $j\in[d-2]$, together with $S$ and $\{x,y\}\cup S_i$ form a $d$-cluster,
a contradiction. Therefore, $\mathcal{T}_0=\mathcal{T}_1=\mathcal{T}_2=\emptyset$.
So, $\mathcal{F}(\bar{x})=\bigcup_{i=3}^{k}\mathcal{T}_i$.
We may assume that
$|\mathcal{T}_p|\ge m/(k-2)$ for some $3\le p\le k$.
Applying Lemma \ref{lemma-mubayi-bipartite-d-cluster} with $U_1=B, U_2=[n]-x-B$ and $u_1=|U_1|, u_2=|U_2|$,
we obtain
\[
\frac{m}{k-2}\le |\mathcal{T}_p|\le ku_1^{p-1}u_2^{k-p}\le k\left(\frac{\left(6k^2/c+6k\right)m}{\binom{n-1}{k-2}}\right)^{p-1}n^{k-p}.
\]
Simplifying the inequality above, we obtain
\[
m^{p-2}\ge \frac{\binom{n-1}{k-2}^{p-1}}{(k-2)k\left(6k^2/c+6k\right)^{p-1}n^{k-p}}.
\]
Since $m\le \delta\binom{n-1}{k-1}\le \delta n\binom{n-1}{k-2}$, we know that
\[
\begin{split}
\delta\ge \delta^{p-2} & \ge \frac{m^{p-2}}{n^{p-2}\binom{n-1}{k-2}^{p-2}}\ge \frac{\binom{n-1}{k-2}}{(k-2)k\left(6k^2/c+6k\right)^{p-1}n^{k-2}}\\
&\ge \frac{\left(\frac{n-k}{n}\right)^{k-2}}{(k-2)!(k-2)k\left(6k^2/c+6k\right)^{p-1}}\ge \frac{1}{2(k-2)!(k-2)k\left(6k^2/c+6k\right)^{p-1}}
\end{split}
\]
holds for sufficiently large $n$.

Now choose $\delta>0$ to be sufficiently small such that $\delta<\frac{1}{2(k-2)!(k-2)k\left(6k^2/c+6k\right)^{p-1}}$.
Then we get a contradiction, and this completes the proof.
\end{proof}

\section{Appendix B}
It is easy to see that $\nu(\mathcal{L}_i)=\nu+1$ holds for every $i\in[5]$.
So, it suffices to show that the families $\mathcal{L}_{i}$ are $d$-cluster-free.
\begin{claim}\label{claim-append-3-cluster-L1}
$\mathcal{L}_1$ is $3$-cluster-free and $\nu(\mathcal{L}_1)=\nu+1$.
\end{claim}
\begin{proof}
Suppose there exist three sets $L_1,L_2,L_3\in \mathcal{L}_1$ that form a $3$-cluster.
Since $L_1\cap L_2\cap L_3=\emptyset$, one of these three sets must be $C_i$ for some $i$, and
 we may assume that $L_1=C_1$.
On the other hand, since $|L_1\cup L_2\cup L_3|\le 2k$, $L_2 \text{ and } L_3$ must both contain $y$,
and $L_2\cap J, L_3\cap J$ must be both contained in $P_1$.
However, in this case, we would have $v_1\in L_1\cap L_2\cap L_3$, a contradiction.
Therefore, $\mathcal{L}_1$ is $3$-cluster-free.
\end{proof}

\begin{claim}\label{claim-append-3-cluster-L2}
$\mathcal{L}_2$ is $3$-cluster-free and $\nu(\mathcal{L}_2)=\nu+1$.
\end{claim}
\begin{proof}
Suppose there exist three sets $L_1,L_2,L_3\in \mathcal{L}_1$ that form a $3$-cluster.
Similarly to Claim \ref{claim-append-3-cluster-L1}, we may assume that $L_1=C_1$.
Since $|L_1\cap L_2\cap L_3|\le 2k$, we know that  $L_2\cap J$ and $L_3\cap J$ must be both nonempty.
For every $i\in \{2,3\}$, let $\mathcal{L}_2(i)=\{L\in \mathcal{L}_2: |L\cap J|=i\}$.
From the proof of Claim \ref{claim-append-3-cluster-L1}, we know that $L_2$ and $L_3$ cannot be both in $\mathcal{L}_2(2)$.

If $L_2\in \mathcal{L}_2(2)$ and $L_3\in \mathcal{L}_2(3)$,
then we would have $|L_2\cap L_3|\le k-3$ and this implies
$|L_1\cup L_2\cup L_3|= 3k-(|L_1\cap L_2|+|L_1\cap L_3|+|L_2\cap L_3|)\ge 2k+1$, a contradiction.

So we may assume that  $L_2,L_3$ are both contained in $\mathcal{L}_2(3)$.
Let $I_2=L_2\cap J$ and $I_3=L_3\cap J$.
By definition of $\mathcal{L}_{2}$, we have $|I_2\cap I_3|\le 1$.
Note that at least  one of $L_1\cap I_2, L_1\cap I_3, I_2\cap I_3$ must be the empty set,
since otherwise we would have $L_1\cap L_2\cap L_3\neq \emptyset$, a contradiction.
Therefore, we have $|L_1\cap L_2|+|L_1\cap L_3|+|L_2\cap L_3|\le k-3+2=k-1$,
 and this implies that $|L_1\cup L_2\cup L_3|\ge 2k+1$,
a contradiction.
Therefore, $\mathcal{L}_2$ is $3$-cluster-free.
\end{proof}

\begin{claim}\label{claim-append-4-cluster-L2}
$\mathcal{L}_3$ is $4$-cluster-free and $\nu(\mathcal{L}_3)=2$.
\end{claim}
\begin{proof}
Suppose there exist four sets $L_1,L_2,L_3,L_4\in \mathcal{L}$ that form a $4$-cluster.
Similarly, we may assume that $L_1=C_1$.
Since $|L_1\cup \cdots \cup L_4|\le 2k$, there are at least two sets in $\{L_2,L_3,L_4\}$ containing $v$.
We may assume that $v\in L_2$ and $v\in L_3$.
Let $E_2=L_2\cap W$ and $E_3=L_3\cap W$ and note that $E_2,E_3\in \mathcal{G}$.
Since $\mathcal{G}$ is $P_{2}^{k-2}$-free, $|E_2\cup E_3|\ge k$,
and this contradicts our assumption that $|L_1\cup \cdots \cup L_4|\le 2k$.
Therefore, $\mathcal{L}_3$ is $4$-cluster-free.
\end{proof}

\begin{claim}\label{claim-append-4-cluster-L4}
$\mathcal{L}_4$ is $4$-cluster-free and $\nu(\mathcal{L}_4)=\nu+1$.
\end{claim}
\begin{proof}
Suppose there exist four sets $L_1,L_2,L_3,L_4\in \mathcal{L}_4$ that form a $4$-cluster.
Similarly, we may assume that $L_1=C_1$.
Since $|L_1\cup L_2\cup L_3\cup L_4|\le 2k$, $L_2,L_3,L_4$ must all contain $y$ and all have nonempty intersection with $J$.
For every $i\in \{2,3,4\}$,  let $E_i=L_i\cap J$ and let $S_i=L_i\cap W$.
$|L_1\cup L_2\cup L_3\cup L_4|\le 2k$ implies that $|S_2\cup S_3\cup S_4|\le k-2$.

Suppose that $|S_2\cup S_3\cup S_4|= k-2$.
Again, $|L_1\cup L_2\cup L_3\cup L_4|\le 2k$ implies that $E_2=E_3=E_4$,
and $E_i\cap C_1\neq \emptyset$ holds for every $i\in \{2,3,4\}$.
However, in this case,  we would have $L_1\cap L_2\cap L_3\cap L_4\neq \emptyset$, a contradiction.

Therefore, we may assume that $S_2=S_3=S_4$.
Since $|L_1\cup L_2\cup L_3\cup L_4|\le 2k$, at least two sets in $\{E_2, E_3,E_4\}$
have nonempty intersection with $C_1$, and we may assume that $E_2\cap C_1\neq\emptyset$ and  $E_3\cap C_1\neq\emptyset$.
Now we already have $|L_1\cup L_2\cup L_3|=2k$, therefore, $E_4$
must be contained in $E_2\cup E_3\cup C_1$.
However, by the definition of $G$, this is impossible.
Therefore, $\mathcal{L}_4$ is $4$-cluster-free.
\end{proof}

\begin{claim}\label{claim-append-d-cluster-L5}
$\mathcal{L}_5$ is $d$-cluster-free and $\nu(\mathcal{L}_5)=\nu+1$.
\end{claim}
\begin{proof}
Suppose there exist $d$ sets $L_1,\ldots,L_d\in\mathcal{L}_5$ that form a $d$-cluster.
Similarly, we may assume that $L_1=C_1$.
For every $i\in \{2,\ldots,d\}$, let $S_i=L_i\cap W$ and $T_i=L_i\cap J$, and note that some of the $T_i$'s may be empty.
$|L_1\cup \cdots\cup L_d|\le 2k$ implies that $L_2,\ldots,L_{d}$ all contains $y$ and $|S_2\cup \cdots \cup S_{d}|\le k-1$.
Let $S=S_2\cup \cdots \cup S_{d}$.

If $S$ is of size $k-1$, then $T_i \subset C_1$ holds for every $i\in \{2,\ldots,d\}$,
here empty set is also considered as contained in $C_1$.
Since at most one set in $\{T_2,\ldots,T_d\}$ is empty, $S$ contains at least $d-2$ edges of $\mathcal{G}_1$ and, hence, $\mathcal{G}_1[S]\in\mathcal{H}_{k-1}^{d-2}$, a contradiction.

Therefore, we may assume that $S_2=\cdots=S_{d}$.
Note that $S$ is of size $k-2$ and every $T_i$ is nonempty.
Since $|L_1\cup \cdots\cup L_d|\le 2k$, at most one set in $\{T_2,\ldots,T_d\}$ is not contained in $C_1$.
This implies that at least $d-2$ sets in $\{T_2,\ldots,T_d\}$ are contained in $C_1$.
However, this implies that $S$ is an edge in $\mathcal{G}_1$
with multiplicity at least $d-2$, a contradiction.
Therefore, $\mathcal{L}_5$ is $d$-cluster-free.
\end{proof}

\bibliographystyle{abbrv}
\bibliography{clustermatch}

\begin{thebibliography}{10}

\bibitem{chen2009cluster}
W.~Y.~C. {Chen}, J.~{Liu}, and L.~X.~W. {Wang}.
\newblock {Families of sets with intersecting clusters}.
\newblock {\em {SIAM J. Discrete Math.}}, 23(3):1249--1260, 2009.

\bibitem{GC18}
G.~Currier.
\newblock On the $d$-cluster generalization of {E}rd{\H{o}}s-{K}o-{R}ado.
\newblock {\em arXiv preprint arXiv:1811.11153}, 2018.

\bibitem{EKR}
P.~{Erd\H{o}s}, C.~{Ko}, and R.~{Rado}.
\newblock {Intersection theorems for systems of finite sets}.
\newblock {\em {Q. J. Math., Oxf. II. Ser.}}, 12:313--320, 1961.

\bibitem{frankl1976d-wise}
P.~{Frankl}.
\newblock {On Sperner families satisfying an additional condition}.
\newblock {\em {J. Comb. Theory, Ser. A}}, 20:1--11, 1976.

\bibitem{frankl1983new}
P.~{Frankl} and Z.~{F\"{u}eredi}.
\newblock {A new generalization of the Erd\H{o}s-Ko-Rado theorem}.
\newblock {\em {Combinatorica}}, 3:341--349, 1983.

\bibitem{furedi2011cluster}
Z.~{F\"uredi} and L.~{\"Ozkahya}.
\newblock {Unavoidable subhypergraphs: \(\mathbf a\)-clusters}.
\newblock {\em {J. Comb. Theory, Ser. A}}, 118(8):2246--2256, 2011.

\bibitem{keevash2011hypergraph}
P.~Keevash.
\newblock Hypergraph {T}ur\'{a}n problems.
\newblock In {\em Surveys in combinatorics 2011}, volume 392 of {\em London
  Math. Soc. Lecture Note Ser.}, pages 83--139. Cambridge Univ. Press,
  Cambridge, 2011.

\bibitem{keevash2014existence}
P.~Keevash.
\newblock The existence of designs.
\newblock {\em arXiv preprint arXiv:1401.3665}, 2014.

\bibitem{XL19STRUCTURE}
X.~Liu.
\newblock Structural results for conditionally intersecting families and some
  applications.
\newblock {\em arXiv preprint arXiv:1903.01622}, 2019.

\bibitem{mammoliti2017}
A.~{Mammoliti} and T.~{Britz}.
\newblock {On Mubayi's conjecture and conditionally intersecting sets}.
\newblock {\em {SIAM J. Discrete Math.}}, 32(3):2361--2380, 2018.

\bibitem{mubayi2006erdos}
D.~{Mubayi}.
\newblock {Erd\H{o}s--Ko--Rado for three sets}.
\newblock {\em {J. Comb. Theory, Ser. A}}, 113(3):547--550, 2006.

\bibitem{mubayiintersection4}
D.~{Mubayi}.
\newblock {An intersection theorem for four sets}.
\newblock {\em {Adv. Math.}}, 215(2):601--615, 2007.

\bibitem{mubayi2009set}
D.~{Mubayi} and R.~{Ramadurai}.
\newblock {Set systems with union and intersection constraints}.
\newblock {\em {J. Comb. Theory, Ser. B}}, 99(3):639--642, 2009.

\end{thebibliography}
\end{document}